\newtheorem{thm}{Theorem}
\newtheorem{cor}[thm]{Corollary}
\newtheorem{prop}[thm]{Proposition}
\newtheorem{lem}[thm]{Lemma}
\newtheorem{remark}[thm]{Remark}
\newcommand{\SL}{\mathrm{SL}_2}
\newcommand{\Log}{\boldsymbol{\mathrm{Log}}} 
\newcommand{\Logres}{\boldsymbol{\mathrm{Log}^{\mathrm{res}}}} 
\newcommand{\Irrun}{\boldsymbol{\mathrm{Irr}^{\mathrm{un}}}} 
\newcommand{\Irram}{\boldsymbol{\mathrm{Irr}^{\mathrm{ram}}}}
\DeclareMathOperator{\tr}{tr}
\DeclareMathOperator{\ord}{ord}
\begin{document}

\author[K. Diarra]{Karamoko DIARRA}
\address{ DER de Math\'ematiques et d'informatique, FAST, Universit\'e des Sciences, des Techniques et des Technologies de Bamako, BP: E $3206$ Mali.}
\email{karamoko.diara2005@yahoo.fr}
 
\author[F. Loray]{Frank LORAY}
\address{ Univ Rennes, CNRS, IRMAR - UMR 6625, F-35000 Rennes, France.}
\email{frank.loray@univ-rennes1.fr}

\title[Algebraic solutions of irregular Garnier systems]{Classification of algebraic solutions of irregular Garnier systems.}

\date{\bf \today}
\dedicatory{In the memory of Tan Lei}
\subjclass{34M55, 34M56, 34M03}
\thanks{We thank CNRS, Universit\'e de Rennes 1, Henri Lebesgue Center and ANR-16-CE40-0008 project ``{\it Foliage}'' for financial support.  We also thank Simons Fundation's project NLAGA who invited us two in Dakar, we started working on this subject there. We finally thank Hiroyuki Kawamuko and Yousuke Ohyama for helpfull discussions
on the subject.}
\keywords{Ordinary differential equations, Isomonodromic deformations, Hurwitz spaces}

\begin{abstract} We prove that algebraic solutions of Garnier systems in the irregular case
are of two types. The classical ones come from isomonodromic deformations of linear equations 
with diagonal or dihedral differential Galois group; we give a complete list in the rank $N=2$ case
(two indeterminates).The pull-back ones come from deformations of coverings over a fixed degenerate
hypergeometric equation; we provide a complete list when the differential Galois group is $\mathrm{SL}_2(\mathbb C)$.
By the way, we have a complete list of algebraic solutions for the rank $N=2$ irregular Garnier systems.
\end{abstract}

\maketitle
\tableofcontents

\section{Introduction}

During the last ten years, much have been done about classification of special solutions of isomonodromy equations.
Recall how Painlev\'e and Garnier differential equations arise in the computation of monodromy-preserving 
deformations of linear 
equations over the Riemann sphere.  Consider the general $2^{\text{nd}}$ order linear differential equation 
\begin{equation}\label{eq:Gal2ndOrderGarnierReg}
u''+f(x)u'+g(x)u=0,\ \ \ \ (u'=\frac{du}{dx})
\end{equation}
$$\left\{\begin{matrix}
f(x)=&\frac{\theta_{N+1}}{x}+\frac{\theta_{N+2}}{x-1}+\sum_{i=1}^N\frac{\theta_i}{x-t_i}-\sum_{j=1}^N\frac{1}{x-q_j}\\
g(x)=&\frac{c_0}{x}+\frac{c_1}{x-1}-\sum_{i=1}^N\frac{H_i}{x-t_i}+\sum_{j=1}^N\frac{p_j}{x-q_j}
\end{matrix}\right.$$
with $2N+3$ regular-singular points distributed as follows:
\begin{itemize}
\item $N+3$ essential singular points $x=t_1,\ldots,t_N,0,1,\infty$ with exponents $\theta_i$, $i=1,\ldots,N+3$, and
\item $N$ apparent singular points $x=q_1,\ldots,q_N$ (with trivial local monodromy).
\end{itemize}
Coefficients $c_0,c_1$ and $H_i$ can be explicitely determined as rational functions of all other parameters
$t_i$'s, $q_i$'s, $p_i$'s, and $\theta_t$'s after imposing the following constraints
\begin{itemize}
\item the singular point at $x=\infty$ is regular-singular 
with exponent $\theta_{N+3}$, 
\item the singular points  $x=q_1,\ldots,q_N$ are apparent.
\end{itemize}
Then, it follows from the works of Fuchs, Garnier, Okamoto, Kimura that 
an analytic deformation 
$$t\to (p_1(t),\ldots,p_N(t),q_1(t),\ldots,q_N(t))\ \ \ \text{with}\ \ \ t=(t_1,\ldots,t_N),$$
of equation (\ref{eq:Gal2ndOrderGarnierReg})
is isomonodromic (i.e. with constant monodromy) if, 
and only if\footnote{The ``only if'' needs that exponents $\theta_t$'s are not integers.} 
all $\theta_t$'s are fixed and other parameters satisfy the Hamiltonian system
\begin{equation}\label{eq:HamiltonianGarnierSystem}
\frac{dq_j}{dt_i}=\frac{\partial H_i}{\partial p_j}\ \ \ \text{and}\ \ \ \frac{dp_j}{dt_i}=-\frac{\partial H_i}{\partial q_j}\ \ \ 
\forall i,j=1,\ldots,N.
\end{equation}
The system (\ref{eq:HamiltonianGarnierSystem}) reduces to the Painlev\'e VI equation for $N=1$, and to the Garnier system
for $N>1$. It is integrable, in the sense that it admits a local solution for each initial data. These local solutions are
expected to be very transcendental in general, and this has been proved by Umemura in the Painlev\'e case $N=1$:
for any choice of $\theta_i$'s, the general solution cannot be explicitely expressed in terms of solutions of linear
differential equations (of any order), non linear differential equations of order $1$, or algebraic functions.
However, for special choices of $\theta_i$'s, there are Riccati solutions or algebraic solutions.
The first ones, called ``classical'', have been classified by Watanabe; the later ones have been
classified mainly by Boalch, and by Lisovyy and Tykhyy (see \cite{Boalch,LisovyyTykhyy}),
after a long period of works by Hitchin, Dubrovin, Mazzocco, Andreev, Kitaev... 
\cite{Hitchin,DM0,MazzoccoPicard,Doran,AK,AK2,Kitaev,Kitaev2,Kitaev3,Kitaev4,BoalchIco,BoalchTetraOcta,Boalch6,BoalchHigherGenus,VK,VK2}. In the Garnier case $N>1$, we expect a similar feature; classical solutions have been classified by 
Okamoto and Kimura in \cite{OK}, and by 
Mazzocco in \cite{MazzoccoGarnier}, but the classification of algebraic solutions is still open. 
Following Cousin, and Heu \cite{Cousin,CousinHeu}, algebraic solutions are finite branch solutions and come from finite orbits
of the Mapping-Class-Group on character varieties, or equivalently representations on the total space of an algebraic deformation of the punctured curve, extending the monodromy representation of (\ref{eq:Gal2ndOrderGarnierReg}).
On the other hand, the result of Corlette and Simpson \cite{CS}
shows that such representations are of three different origins:
\begin{itemize}
\item degenerate representations, i.e. taking values into a finite, dihedral or reducible group,
\item factorization through a representation on a fixed curve,
\item arithmetic quotient of a polydisc.
\end{itemize}
The reader will find a more precise statement in \cite{CS}. Let us just mention in the first case
the works of Girand \cite{Girand} and Komyo \cite{Komyo} for deformations of equation (\ref{eq:Gal2ndOrderGarnierReg})
with dihedral monodromy, and Cousin and Moussard \cite{CousinMoussard} in the reducible case.
Deformations of equation (\ref{eq:Gal2ndOrderGarnierReg}) having a finite group are algebraic 
and provide an algebraic Garnier solution in a systematic way, but computations can be very tedious
as it has been in the works of Boalch for the Painlev\'e case $N=1$.

In the second case, solutions are said of ``pull-back type'': the deformation of equation (\ref{eq:Gal2ndOrderGarnierReg})
is given in this case by the pull-back of a fixed differential equation (or instance rigid, hypergeometric, i.e. $N=0$)
by a family of ramified covers $f_t:\mathbb P^1\to\mathbb P^1$ (see section \ref{sec:pullback}). This method
has been used by Doran, Andreev, Kitaev, Vidunas to construct Painlev\'e VI algebraic solutions. For Garnier
systems $N>1$, all pull-back solutions with non degenerate linear monodromy have been classified by the first author 
in \cite{Diarra1} (see Proposition \ref{Prop:DiarraPullBack}). However, in the last case, we do not know
how to bound the arithmetic data in order to be able to classify. This is not using this trichotomy that 
algebraic solutions of Painlev\'e VI equation were found, but by brute force, which seems out of reach 
even in the case $N=2$. Recently, Calligaris and Mazzocco \cite{CM} gave a partial classification 
by using confuence of poles in order to exploit the Painlev\'e classification \cite{LisovyyTykhyy}.

So far, we have only considered linear differential equations with regular-singular points,
leading to the Painlev\'e VI equation and Garnier systems. There is a similar approach
for linear differential equations with irregular-singular points leading for instance, in the case $N=1$,
to the other Painlev\'e equations (see section \ref{sec:IsomDef}). They can be deduced from 
the regular-singular case by confluence of poles, and whose solutions parametrize isomonodromic and 
iso-Stokes deformations of linear differential equations with $4$ poles counted with multiplicity.
In a similar way, we can define irregular Garnier systems and, in the case $N=2$, they are listed
in the papers of Kimura \cite{Kimura} and Kawamuko \cite{Kawamuko}. For general case $N>1$,
such integrable systems also exist, due to the work of Malgrange (see Heu \cite{Heu} in the ramified case),
and a general formula can be found in the work of Krichever \cite{Krichever}.
Again we expect the general solution to be very transcendental, but there are classical and algebraic solutions.
In the Painlev\'e case, a complete classification of these special solutions can be found in \cite{OO};
see section \ref{sec:AlgebraicPainleveI-V} for the list of algebraic solutions for Painlev\'e I to V equations.
For Garnier systems, classical solutions have been investigated by Suzuki in \cite{Suzuki}. 
For several formal types, Kawamuko and Suzuki listed rational/algebraic solutions in \cite{KawamukoRat,Suzuki,KawamukoAlg}. 
This is all what is known so far about algebraic solutions of irregular Garnier systems. 
Our main result is a complete classification, as well as a complete list in the case $N=2$.

Recently, the second author, together with Pereira and Touzet, proved an irregular version of Corlette-Simpson
Theorem in \cite{LPT}. An immediate consequence is that an algebraic solution of an irregular Garnier
system is of one of the two following types
\begin{itemize}
\item {\bf classical}: comes from the deformation of a rank $2$ differential system with diagonal 
or dihedral differential Galois group,
\item {\bf pull-back}: comes from the deformation obtained by pull-back of a fixed linear differential equation
by a family ramified covers.
\end{itemize}
(see Corollary \ref{cor:structure}).
The main result of the paper is the classification of solutions of pull-back type. Let us describe more precisely the construction.
We consider a fixed meromorphic linear differential
equation $\mathcal E_0$ on $\mathbb P^1$, which can be a two-by-two system, a second-order scalar equation, or a rank two
vector bundle with a connection. Then, we consider a family of ramified covers $(\phi_t:\mathbb P^1\to \mathbb P^1)_t$
and the family of pull-back $\mathcal E_t:=\phi_t^*\mathcal E_0$. Clearly, the deformation $t\mapsto \mathcal E_t$
is isomonodromic and isoStokes, and this gives rise to a partial solution of a (possibly irregular) Garnier system;
moreover, if the family $(\phi_t)_t$ is algebraic, we get an algebraic partial solution.
Here, partial means that the time variable is a function of $t$ which may not be dominant,
and it won't be for general $\mathcal E_0$ and $(\phi_t)_t$. When the dimension of deformation 
has the right dimension, namely $n-3$ where $n$ is the number of poles of $\mathcal E_t$
counted with multiplicity, then we get a complete algebraic solution. Sections \ref{sec:RamifiedCovers},
\ref{sec:Scattering}, \ref{sec:IrregEuler} and \ref{sec:Classifcover} are devoted to the classification 
of such solutions. Inspired by the similar classification in the logarithmic case established by the first author in \cite{Diarra1},
we define the irregular analogues of curve, Teichm\"uller and moduli spaces, Euler characteristic and Riemann-Hurwitz
formula. Then we prove that, assuming $\mathcal E_0$ irregular with differential Galois group not reduced to 
the diagonal or dihedral group (to avoid classical solutions), $\mathcal E_0$ is of degenerate hypergeometric type 
(at most $3$ poles counted with multiplicity) and the cover degree of $\phi_t$ is bounded by $6$.
Finally, the list of solutions is obtained by scattering poles to reduce to the list of \cite{Diarra1}.
In the pure Garnier case $N>1$ (i.e. excluding Painlev\'e equations) we obtain $3$ solutions, 
for Garnier systems of rank $N=2$ or $3$ (see Tables \ref{table:irregular} and \ref{table:irregularConfl}). 
They all come from pull-back of the degenerate (or ramified) Kummer equation $u''+\frac{2}{3x}u'-\frac{1}{x}u=0$
by coverings of degree $4$ and $6$.
Consequently:
{\it irregular Garnier systems of rank $N>3$ admit only classical algebraic solutions.}

In order to describe our classification result, let us introduce for each singular point of the linear differential
equation the following invariants:
\begin{itemize}
\item the Poincar\'e-Katz irregularity index $\kappa\in\frac{1}{2}\mathbb Z_{\ge0}$ which is such that,
after putting the linear differential equation $u''=g(x)u$ into Sturm-Liouville normal form,
the coefficient $g(x)$ has a pole of order $2\kappa+2$; 
\item the exponent $\theta\in\mathbb C$, defined up to a sign, which is the difference of eigenvalues
of the residue for the differential equation in matrix form when $\kappa\in\mathbb Z_{\ge0}$, and 
$\theta=0$ in the ramified case $\kappa\in\frac{1}{2}+\mathbb Z_{\ge0}$.
\end{itemize}
Denote by $\lceil\kappa\rceil$ the smallest integer satisfying $\kappa\le k$.
This formal data can be algebraically computed from the differential equation, and is invariant under isomonodromic/isoStokes deformations. In fact, after normalizing the linear differential equation by birational gauge transformation,
in order to minimize the number and order of poles, the differential equation is determined by 
\begin{itemize}
\item its irregular curve, i.e. the base curve equipped with local coordinates up to order $\lceil\kappa\rceil$ at each pole
(only the position for simple poles),
\item the monodromy data including Stokes matrices.
\end{itemize}
(see \cite{MRGalois,Krichever,BMM,vdPS}, and sections \ref{sec:IrregCurve} and \ref{sec:MonodStokes} for details).
With this in hand, for each (global) formal data (and fixed genus $g$)
$$\begin{pmatrix}\kappa_1&\cdots&\kappa_n\\
\theta_1&\cdots&\theta_n\end{pmatrix}\ \ \ \text{where}\ \ \ 
\left\{\begin{matrix}
\kappa_i\in\mathbb Z_{\ge0}&\Rightarrow&\theta_i\in\mathbb C\\
\kappa_i\in\frac{1}{2}+\mathbb Z_{\ge0}&\Rightarrow&\theta_i=0
\end{matrix}\right.$$
we get a quasi-projective moduli space of linear differential equations (see \cite{InabaSaito,Inaba}), 
and on this moduli space we get a polynomial foliation whose leaves
correspond to deformations of the differential equation (in fact of the spectral curve)
with constant monodromy data: we call it {\it isomonodromic foliation}. In the genus $g=0$ case, this is known as (degenerate) 
Garnier systems: we get a $N$ dimensional foliation on a $3N$-dimensional moduli space,
where $N=\sum_{i=1}^n \lceil\kappa_i\rceil+1$ is called the rank of the Garnier system. 
Leaves with algebraic closure correspond to algebraic solutions of the Garnier system.
For $N=1$, we find all Painlev\'e equations (see section \ref{sec:IsomDef}) and algebraic solutions,
in the irregular case ($\kappa_i>0$ for one $i$ at least), are listed in section \ref{sec:AlgebraicPainleveI-V}.

There are biregular isomorphisms between these foliated moduli spaces, due to the fact that
the normalization of a linear differential equation is not unique: after birational gauge transformation,
one can shift $\theta_i$'s by integers (and we can change their sign). 
We obviously classify algebraic solutions up to these isomorphisms.

\begin{thm}\label{thm:3nonclassical}
Up to isomorphisms, there are exactly $3$ non classical algebraic solutions, for irregular Garnier systems 
of rank $N>1$. The list of corresponding formal data is as follows:
$$\begin{pmatrix}0& 1&1\\ \frac{1}{3} & 0&1\end{pmatrix},\ \ \ \begin{pmatrix}1&2\\ 0 & 1\end{pmatrix}\ \ \ \text{and}\ \ \ 
\begin{pmatrix}1& 1&1\\ 0&0 & 1\end{pmatrix}.$$
\end{thm}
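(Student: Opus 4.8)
The plan is to combine the irregular Corlette--Simpson trichotomy (Corollary \ref{cor:structure}) with a Riemann--Hurwitz/Euler-characteristic bound for the pull-back construction. Since we are after \emph{non classical} solutions, by Corollary \ref{cor:structure} we may assume the solution is of pull-back type: $\mathcal E_t=\phi_t^*\mathcal E_0$ for a fixed irregular equation $\mathcal E_0$ on $\mathbb P^1$ whose differential Galois group is neither diagonal nor dihedral, hence (being rank two) is $\SL_2(\mathbb C)$ up to finite index, and for an algebraic family of ramified covers $\phi_t\colon\mathbb P^1\to\mathbb P^1$. First I would invoke the structural result proved in sections \ref{sec:RamifiedCovers}--\ref{sec:Classifcover}: that under these hypotheses $\mathcal E_0$ must be of degenerate hypergeometric type, i.e.\ have at most $3$ poles counted with multiplicity, and the degree of $\phi_t$ is bounded by $6$. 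This reduces an \emph{a priori} infinite classification problem to a finite check.

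Next I would organize the finite search. The data to enumerate are: (i) the formal type of $\mathcal E_0$ — a degenerate hypergeometric equation, so one of finitely many configurations of at most $3$ poles with their Poincar\'e--Katz indices $\kappa_i$ and exponents $\theta_i$, further constrained by the requirement that the differential Galois group be $\SL_2(\mathbb C)$ (this rules out the Lam\'e/finite/reducible/dihedral specializations); and (ii) the branching data of $\phi_t$, a cover of degree $d\le 6$ whose ramification is compatible with the poles and apparent singularities of $\mathcal E_0$, so that the number of poles of $\mathcal E_t=\phi_t^*\mathcal E_0$ counted with multiplicity, call it $n$, yields a Garnier system of rank $N=\sum\lceil\kappa_i\rceil+1$. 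One then keeps only those pairs for which the deformation of $\phi_t$ inside the relevant Hurwitz space has the right dimension $N$ with $N>1$ (i.e.\ excluding the Painlev\'e case, already covered by \cite{OO} and section \ref{sec:AlgebraicPainleveI-V}); here the irregular Riemann--Hurwitz formula and the computation of the irregular Euler characteristic from sections \ref{sec:IrregEuler}--\ref{sec:Classifcover} tell us exactly which branching profiles give a positive-dimensional, effective deformation. I expect this to single out coverings of degree $4$ and $6$ over the degenerate Kummer equation $u''+\frac{2}{3x}u'-\frac{1}{x}u=0$.

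Having produced the admissible covers, the remaining step is to compute, for each, the resulting formal data on the base $\mathbb P^1$ of $\mathcal E_t$: one reads off the $\kappa_i$ and $\theta_i$ at each pole of $\phi_t^*\mathcal E_0$ from the local ramification index via the multiplicativity of the Poincar\'e--Katz index under pull-back and the corresponding transformation of exponents (a simple pole of $\mathcal E_0$ with exponent $\theta$ pulls back to a pole with exponent $e\theta$ under an $e$-fold ramification, an irregular pole's $\kappa$ gets multiplied by $e$, and unramified preimages contribute apparent or logarithmic points that can be normalized away by birational gauge transformation, shifting $\theta$'s by integers). Matching these computations against the list should yield exactly the three displayed formal types $\begin{pmatrix}0&1&1\\ \frac13&0&1\end{pmatrix}$, $\begin{pmatrix}1&2\\ 0&1\end{pmatrix}$, $\begin{pmatrix}1&1&1\\ 0&0&1\end{pmatrix}$, each obtained by ``scattering'' the poles of a degree $4$ or $6$ cover and then appealing to the logarithmic classification of \cite{Diarra1} (Proposition \ref{Prop:DiarraPullBack}) to certify that no further solutions hide among confluences. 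Finally I would check that distinct entries on the list are genuinely non-isomorphic under the allowed biregular identifications (integer shifts and sign changes of the $\theta_i$).

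The main obstacle, and the part requiring the most care, is the degree and pole-number bound for $\mathcal E_0$ and $\phi_t$: the heart of the argument is the irregular Euler-characteristic inequality that forces $\mathcal E_0$ to be degenerate hypergeometric and $\deg\phi_t\le 6$. Once that finiteness is in hand the rest is a bounded, if intricate, enumeration; but that bound rests on correctly setting up the irregular analogues of curves, Teichm\"uller and moduli spaces, and the Riemann--Hurwitz formula in sections \ref{sec:RamifiedCovers}--\ref{sec:Classifcover}, and on verifying that the dimension count for the Hurwitz family matches the Garnier rank $N$ — this is where a missed configuration would hide, so the enumeration must be exhaustive rather than merely illustrative.
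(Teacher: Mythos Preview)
Your proposal is correct and follows essentially the same route as the paper: reduce to pull-back type via Corollary~\ref{cor:structure}, use the irregular Euler characteristic and Riemann--Hurwitz bound (Proposition~\ref{prop:bounds} and its corollary) to force $\mathcal E_0$ degenerate hypergeometric with $\deg\phi\le6$, scatter ramifications to reduce to the logarithmic list of \cite{Diarra1} (Proposition~\ref{Prop:DiarraPullBack}), then conflue back to obtain Tables~\ref{table:irregular} and~\ref{table:irregularConfl} and filter for rank $N>1$. One minor slip: your rank formula should read $N=-3+\sum_i(\lceil\kappa_i\rceil+1)$ (the $-3$ from the genus-zero Teichm\"uller count is missing), but this does not affect the argument.
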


For the rank $N=2$ case, the list of explicit Garnier systems is provided in \cite{Kimura,Kawamuko}.
The two first algebraic solutions are as follows under Kimura's notations \cite{Kimura}:
\begin{itemize}
\item $H(1,2,2;2)$ with parameters $\varkappa_0=\varkappa_1=0$ and $\varkappa=\frac{2}{9}$ (and $\eta_0=\eta_1=1$):
$q_1$ is implicitely defined by\footnote{In Kimura's paper, 
canonical coordinates are denoted $\lambda_i$ and $\mu_i$ instead of variables $q_i$ and $p_i$ respectively, 
and Hamiltonians $K_i$ instead of $H_i$}
$$\left(\frac{q_1(q_1+1)}{(q_1-1)(q_1-2)}\right)^3=\left(\frac{t_2}{t_1}\right)^2,$$
and other variables are given by
$$q_2=\frac{q_1+1}{2q_1-1},\ \ \ p_1=-\frac{1}{2}\frac{t_1}{(q_1-1)^2}-\frac{1}{2}\frac{t_2}{q_1^2}-\frac{1}{6}\frac{2q_1-1}{q_1(q_1-1)}$$
$$\text{and}\ \ \ p_2=-\frac{1}{2}\frac{(2q_1-1)^2 t_1}{(q_1-2)^2}-\frac{1}{2}\frac{(2q_1-1)^2 t_2}{(q_1+1)^2}+\frac{1}{2}\frac{2q_1-1}{(q_1-2)(q_1+1)}.$$
\item $H(2,3;2)$ with parameters $\varkappa_0=0$ and $\varkappa_\infty=-\frac{1}{2}$ (and $\eta=1$):
$q_1$ is a solution of
$$\left(\frac{q_1(3q_1+2t_1)}{3}\right)^3=2t_2^2$$
and other variables are given by
$$q_2=-q_1-\frac{2}{3}t_1,\ \ \ p_1=\frac{q_1}{4}+\frac{t_1}{2}-\frac{1}{6q_1}-\frac{t_2}{2q_1^2}$$
$$\text{and}\ \ \ p_2=-\frac{q_1}{4}+\frac{t_1}{3}+\frac{1}{2(3q_1+2t_1)}-\frac{9t_2}{2(3q_1+2t_1)^2}.$$
\end{itemize}
In each case, the solutions $(t_1,t_2)\mapsto(p_1,p_2,q_1,q_2)$ satisfy the Hamiltonian system (\ref{eq:HamiltonianGarnierSystem})
for explicit Hamiltonians $H_i$ given in section \ref{sec:GarnierHamiltonians}.

The second solution coincides with one of the two solutions found by Kawamuko in \cite{KawamukoAlg}.
For the third solution, we are able to compute the algebraic isomonodromic deformation of the pull-back 
linear differential equation, but we don't know the explicit form of the Garnier system in that case.

\begin{thm}\label{thm:rank2classical}
Up to isomorphisms, classical algebraic solutions of irregular Garnier systems 
of rank $N=2$ occur exactly for the following formal data
\begin{itemize}
\item infinite discrete family:
$$\begin{pmatrix}0&0&0&\frac{1}{2}\\ \frac{1}{2}&\frac{1}{2}&\frac{1}{2}&0\end{pmatrix}$$
\item two-parameter families
$$\begin{pmatrix}0&0&0&1\\ \frac{1}{2}&\frac{1}{2}&\theta_1&\theta_2\end{pmatrix},\ \ \ 
\begin{pmatrix}0&\frac{1}{2}&0&0\\ \frac{1}{2}&0&\theta_1&\theta_2\end{pmatrix},\ \ \ 
\begin{pmatrix}0&0&0&1\\ 0&\theta_1&\theta_2&-\theta_1-\theta_2\end{pmatrix}$$
\item one-parameter families
$$\begin{pmatrix}0&0&2\\ \frac{1}{2}&\frac{1}{2}&\theta\end{pmatrix},\ \ 
\begin{pmatrix}0&\frac{1}{2}&1\\ \frac{1}{2}&0&\theta\end{pmatrix},\ \ 
\begin{pmatrix}\frac{1}{2}&\frac{1}{2}&0\\ 0&0&\theta\end{pmatrix},\ \
\begin{pmatrix}0&\frac{3}{2}&0\\ \frac{1}{2}&0&\theta\end{pmatrix},\ \ 
\begin{pmatrix}0&0&2\\ 0&\theta&-\theta\end{pmatrix},\ \ 
\begin{pmatrix}0&1&1\\ 0&\theta&-\theta\end{pmatrix}$$
\item sporadic solutions
$$\begin{pmatrix}\frac{1}{2}&\frac{3}{2}\\ 0&0\end{pmatrix},\ \ \ 
\begin{pmatrix}0&\frac{5}{2}\\ \frac{1}{2}&0\end{pmatrix}\ \ \ \text{and}\ \ \ 
\begin{pmatrix}0&3\\ 0&0\end{pmatrix}.$$
\end{itemize}
In the first case, there are countably many distinct algebraic solutions of unbounded
degree. In any other case, there is exactly one algebraic solution for each formal data.
\end{thm}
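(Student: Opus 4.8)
The plan is to feed the structural trichotomy of Corollary~\ref{cor:structure} into the moduli-theoretic picture set up above. A classical algebraic solution is, by definition, the Garnier solution attached to an isomonodromic/isoStokes deformation of a rank~$2$ meromorphic connection $\mathcal E$ on $\mathbb P^1$ with diagonal or dihedral differential Galois group; since we are in the irregular case, $\mathcal E$ carries a nontrivial exponential part at some pole, so its Galois group is infinite, and only three shapes of $\mathcal E$ survive: (i)~completely reducible, $\mathcal E\simeq L_1\oplus L_2$ with $L_1,L_2$ of rank~$1$; (ii)~reducible but non-semisimple, a non-split extension of rank~$1$ connections; (iii)~genuinely dihedral, $\mathcal E\simeq\pi_*L$ for a degree~$2$ cover $\pi:C\to\mathbb P^1$ and a rank~$1$ connection $L$ on $C$. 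For each shape I will determine which global formal data are compatible with rank $N=2$, and then decide for which of them the deformation is algebraic.

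For (i) and (ii) I would twist $\mathcal E$ by a rank~$1$ connection — this only shifts the exponents $\theta_i$ by integers, hence preserves formal data in the relevant sense — to arrange that $\mathcal O$ is a sub-connection, i.e. $\mathcal E=L\oplus\mathcal O$ or a non-split extension $0\to\mathcal O\to\mathcal E\to L\to0$ with $L=d+\omega$ a rank~$1$ connection on $\mathbb P^1$. Such an $L$ is rigid: $\omega$ is determined by its polar parts (which belong to the formal data) subject only to $\sum_i\operatorname{res}_{p_i}\omega=0$, and there is no Stokes phenomenon in rank~$1$. Therefore, as the underlying irregular curve varies (positions of the poles and jets of local coordinates at the irregular ones), the connection $\mathcal E$ varies algebraically — in case (ii) the extension class follows the linear Gauss--Manin equation of Cousin--Moussard \cite{CousinMoussard}, whose monodromy, built from rigid rank~$1$ data, has finite orbit — so the corresponding leaf of the isomonodromic foliation is algebraic. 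What remains is purely numerical: express the rank $N$ of the Garnier system as an irregular Euler-characteristic count of the poles of $\mathcal E=L\oplus\mathcal O$ (equivalently, of the apparent singularities of the scalar form \ref{eq:Gal2ndOrderGarnierReg}), impose $N=2$, and enumerate the admissible polar orders and residues of $\omega$. This should produce exactly the families with a free exponent parameter and vanishing exponent sum listed in the statement, together with the sporadic degenerate sub-cases; along the way I must discard the configurations where the time map fails to dominate the $2$-dimensional Garnier base (the solution then being an embedded solution of a lower-rank system, e.g. when $L_1\simeq L_2$ or $L$ has trivial projective monodromy), and check that the non-split case (ii) does not enlarge the list of formal data, only possibly the count of solutions, which it does not.

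For (iii) I would encode $\pi$ by its branch divisor, of some even degree $2r$, so that $g(C)=r-1$, and record the formal data of $L$ on $C$. Over a branch point $\mathcal E$ acquires exponent $\tfrac12$ (if $L$ is regular there) or a ramified singularity with $\theta=0$ (if $L$ has a pole there), while away from the branch locus $\pi_*L$ is locally $L\oplus\sigma^*L$. The irregular Riemann--Hurwitz formula of Section~\ref{sec:IrregEuler} then expresses $N$ in terms of $r$ and of the polar orders of $L$, and $N=2$ bounds both. When $g(C)\ge1$ the flat part of $L$ moves in the positive-dimensional group $\operatorname{Jac}(C)$, and the Garnier solution is algebraic precisely when this flat part is a torsion point; the bound $N=2$ then forces $g(C)=1$ — four branch points, one of them also a pole of $L$ — giving the formal data $\begin{pmatrix}0&0&0&\frac12\\\frac12&\frac12&\frac12&0\end{pmatrix}$, and the torsion points of the elliptic curve $C$, of unbounded order, yield the announced countable family of algebraic solutions of unbounded degree. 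When $g(C)=0$ the cover is $z\mapsto z^2$ and $L$ is a rigid rank~$1$ connection on $\mathbb P^1$, so every such deformation is algebraic; imposing $N=2$ leaves finitely many combinatorial types, each of which, after fixing the free residues of $\omega$ when present, determines a single connection up to isomorphism, hence a single leaf — these account for the remaining two-parameter, one-parameter and sporadic entries, the entry $\begin{pmatrix}0&3\\0&0\end{pmatrix}$ also arising from~(i).

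Finally I would rule out double counting: two realizations (as $L_1\oplus L_2$ and as $\pi_*L$) of the same formal data define the same monodromy/Stokes datum, hence the same leaf, so there is exactly one solution per formal data — the only exception being the elliptic family, where the monodromy carries a continuous-but-torsion parameter — and I would cross-check the outcome against the partial lists of Suzuki and Kawamuko \cite{Suzuki,KawamukoAlg,KawamukoRat}. The hard part is the combinatorial core shared by all three cases: setting up the correct irregular analogue of the apparent-singularity count (the Euler-characteristic/Riemann--Hurwitz bookkeeping of Section~\ref{sec:IrregEuler}) so that the single equation $N=2$ reduces to a finite, explicit list of polar configurations, and carefully excluding the degenerate ones where the time map is not dominant. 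The torsion argument in the $g(C)=1$ case is comparatively soft once one knows that the degree of the algebraic solution grows with the order of the torsion point.
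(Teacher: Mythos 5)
Your overall architecture (split by Galois group, treat the dihedral case via the double cover and its genus, get the infinite family from torsion points on the elliptic cover) matches the paper's, and your genus-$1$ analysis of case (iii) is essentially the argument given in Section \ref{sec:ClassicalN=2}. But there is a genuine gap in your case (i), and it changes the answer. By Proposition \ref{prop:cyclicvector}, a connection with Galois group $C_\infty$ and \emph{no apparent singular point} does not produce a Garnier solution at all: the cyclic vector needed to pass to the scalar form is $\nabla$-invariant. So the diagonal contributions come exclusively from type (3) of Corollary \ref{cor:structure}, i.e.\ from a \emph{normalized} connection $L_0\oplus L_0^{-1}$ of polar degree $4$ (hence $T_0=1$) together with one moving apparent singular point supplying the second deformation direction; this is why every $C_\infty$ entry in the statement carries a distinguished $\begin{pmatrix}0\\0\end{pmatrix}$ column and why, e.g., the family $\begin{pmatrix}0&0&0&1\\ 0&\theta_1&\theta_2&-\theta_1-\theta_2\end{pmatrix}$ has two parameters and not three. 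Your enumeration ``polar degree $5$, all poles diagonal, Fuchs relation $\sum\theta_i=0$'' would instead output a $3$-parameter family $\begin{pmatrix}0&0&0&1\\ \theta_0&\theta_1&\theta_2&\theta_3\end{pmatrix}$ with $\sum\theta_i=0$, which is wrong. Your proposed discard rule (``time map fails to dominate, e.g.\ when $L_1\simeq L_2$'') does not capture this: the failure is generic for $C_\infty$, not confined to degenerate sub-loci. Moreover, once the apparent point is present, algebraicity is not automatic from rigidity of rank-$1$ connections: by Proposition \ref{prop:singappGarniersol} one must also require that the local $\nabla$-invariant line defined by the parabolic data at the apparent point has algebraic closure, i.e.\ is one of the two eigen-sub-bundles. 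You also attribute the families $\begin{pmatrix}0&0&2\\ 0&\theta&-\theta\end{pmatrix}$ and $\begin{pmatrix}0&1&1\\ 0&\theta&-\theta\end{pmatrix}$ to the genus-$0$ dihedral case, but these have no pole supporting an anti-diagonal local monodromy and are genuinely $C_\infty$-with-apparent-point entries.

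Two secondary points. First, your case (ii) (non-split extensions, triangular Galois group) is outside the scope: ``classical'' means diagonal or dihedral, and Corollary \ref{cor:structure} excludes triangular non-diagonal Galois groups from producing algebraic Garnier solutions altogether; your argument for dismissing it (that the Gauss--Manin equation of the extension class automatically has finite monodromy orbit because the rank-$1$ data are rigid) is not correct in general and is not needed. Second, for the dihedral case with apparent singular points you should invoke Remark \ref{rem:ApparentDihedral} (they arise as limits of the families already listed), and for uniqueness in the two-anti-diagonal-pole case the relevant mechanism is Remark \ref{rem:2antidiagDinfty}: the monodromy, hence the connection, is determined by the irregular curve and the exponents, so each formal datum carries exactly one leaf — no double-counting argument between realizations is required.
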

Kawamuko already discovered the fourth one-parameter family of solutions in \cite{KawamukoAlg}
and the third sporadic solution in \cite{KawamukoRat}. We provide the first sporadic solution 
in section \ref{sec:GarnierHamiltonians}.

\begin{cor}In the rank $N=2$ case, irregular Garnier systems with the following formal data: 
$$\begin{pmatrix}\frac{1}{2}&2\\ 0&\theta\end{pmatrix},\ \ \ 
\begin{pmatrix}1&\frac{3}{2}\\ \theta&0\end{pmatrix},\ \ \ 
\begin{pmatrix}4\\ \theta\end{pmatrix},\ \ \ 
\begin{pmatrix}\frac{7}{2}\\ 0\end{pmatrix},$$
have no algebraic solution.
\end{cor}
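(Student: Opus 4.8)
The plan is to obtain the Corollary as an immediate consequence of the two classification theorems above, with no new computation. By Corollary \ref{cor:structure}, every algebraic solution of an irregular Garnier system is either \emph{classical} --- coming from an isomonodromic deformation whose differential Galois group is diagonal or dihedral --- or of \emph{pull-back type}. For rank $N=2$, Theorem \ref{thm:rank2classical} lists all classical algebraic solutions, and Theorem \ref{thm:3nonclassical} lists all pull-back algebraic solutions with non-classical (hence $\mathrm{SL}_2(\mathbb{C})$) Galois group over all ranks $N>1$. Since the formal data is an isomonodromic/isoStokes invariant and is exactly what labels both the Garnier system and the deformed linear equation, it suffices to verify that none of the four displayed formal data appears, up to the admissible isomorphisms, among the entries of these two theorems.

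Concretely I would proceed in two steps. First I would check that each of the four data genuinely defines a rank $N=2$ irregular Garnier system, so the statement is not vacuous: using the rank formula $N=\sum_{i=1}^{n}(\lceil\kappa_i\rceil+1)-3$ one finds $(1+1)+(2+1)-3=2$ for $\begin{pmatrix}\tfrac12&2\\ 0&\theta\end{pmatrix}$ and $\begin{pmatrix}1&\tfrac32\\ \theta&0\end{pmatrix}$, and $(4+1)-3=2$ for $\begin{pmatrix}4\\ \theta\end{pmatrix}$ and $\begin{pmatrix}\tfrac72\\ 0\end{pmatrix}$ (recall $\lceil\tfrac72\rceil=4$); these are four of the rank $2$ irregular Garnier systems tabulated by Kimura and Kawamuko \cite{Kimura,Kawamuko}. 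Second --- the only real step --- I would eliminate them by inspection of $\kappa$-profiles. The admissible isomorphisms merely shift the $\theta_i$ by integers and change their signs, so the Poincar\'e--Katz indices $\kappa_i$, hence the unordered $\kappa$-profile together with its ramified/unramified labelling, are invariant. The $\kappa$-profiles occurring in Theorem \ref{thm:3nonclassical} are $\{0,1,1\}$, $\{1,2\}$ and $\{1,1,1\}$; in Theorem \ref{thm:rank2classical} every entry has $3$ or $4$ poles except the three sporadic ones, whose profiles are $\{\tfrac12,\tfrac32\}$, $\{0,\tfrac52\}$ and $\{0,3\}$. In particular neither list contains an entry with a single pole, nor one with profile $\{\tfrac12,2\}$ or $\{1,\tfrac32\}$; so none of the four Garnier systems admits a classical or a pull-back algebraic solution, and by Corollary \ref{cor:structure} none of them admits any algebraic solution.

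I do not expect a genuine obstacle here: all the difficulty is already packed into the proofs of Theorems \ref{thm:3nonclassical} and \ref{thm:rank2classical}, and the Corollary is pure bookkeeping. The two points I would state explicitly are that the completeness assertion of Theorem \ref{thm:rank2classical} already accounts for the resonant and otherwise special values of $\theta$ (so no classical solution hides for particular exponents), and that the $\kappa$-profile with its ramified/unramified decoration is a complete formal invariant modulo the admissible isomorphisms --- a ramified pole cannot be turned into an unramified one, and the number and orders of the poles of the normalized equation are fixed --- which is what legitimises comparing only $\kappa$-profiles. Both facts are established in the earlier sections.
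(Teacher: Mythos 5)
Your proposal is correct and matches the paper's (implicit) argument: the corollary is stated without proof precisely because it follows by combining Corollary \ref{cor:structure} with the completeness assertions of Theorems \ref{thm:3nonclassical} and \ref{thm:rank2classical} and noting that none of the four formal data --- whose $\kappa$-profiles are invariant under the admissible isomorphisms --- appears in either list. Your rank check via $N=\sum_i(\lceil\kappa_i\rceil+1)-3$ and the observation that the $\kappa$-profile (with its ramified/unramified labelling) suffices for the comparison are exactly the bookkeeping the paper intends.
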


Sections \ref{sec:DiffEq}, \ref{sec:ConfluentHypergeometric} and \ref{sec:IsomDef} are folklore \cite{MR,MRGalois,Krichever,BMM,vdPS}. The Structure Theorem is presented in section \ref{sec:algsol}.
Sections \ref{sec:RamifiedCovers}, \ref{sec:Scattering}, \ref{sec:IrregEuler} and \ref{sec:Classifcover} are devoted 
to the classification of pull-back type solutions; in these sections, the irregular Euler characteristic is introduced and the 
irregular Riemann-Hurwitz formula is established. Classical solutions are classified in section \ref{sec:ClassicalN=2}
for the case $N=2$. Finally, explicit Hamiltonians are given in section \ref{sec:GarnierHamiltonians} for the above explicit
algebraic solutions.

\section{Linear differential equations}\label{sec:DiffEq}

In this paper, we consider rank 2 {\bf meromorphic connections} on curves.
This consists in the data of a rank 2 holomorphic vector bundle $E\to C$ on a complete smooth curve $C$,
together with a linear connection $\nabla:E\to E\otimes\Omega^1(D)$ where $D$ is the effective divisor of poles.
Precisely, $\nabla$ is a $\mathbb C$-linear map satisfying the Leibniz
rule $\nabla(f\cdot s)=df\otimes s+f\otimes\nabla(s)$ for any local function $f$ on $C$ and local section $s$ of $E$.
Locally on $C$, in trivializing coordinates for $E$, the connection writes
\begin{equation}\label{eq:generalsystem}
Y\mapsto\nabla(Y)=dY+A\cdot Y\ \ \ \text{with}\ \ \ A=\begin{pmatrix}\alpha&\beta\\ \gamma&\delta\end{pmatrix}
\ \ \ \text{and}\ \ \ Y=\begin{pmatrix}y_1\\ y_2\end{pmatrix}
\end{equation}
where $\alpha,\beta,\gamma,\delta$ are local sections of $\Omega^1(D)$ (meromorphic $1$-forms).
In general, the vector bundle $E$ is not trivial (globally) and we can only give such description in local charts on $C$.

\subsection{Bundle transformations}
There are two kinds of transformations we use to consider on connections. First of all, given a {\bf birational bundle transformation}
$\phi:E'\dashrightarrow E$ over $C$, we can define $\nabla':=\phi^*\nabla$ on $E'$ as the unique connection whose
horizontal sections are preimages by $\phi$ of $\nabla$-horizontal sections at a generic point of $C$.
Locally, $\phi$ is defined by $Y=M\cdot Y'$ with $M$ meromorphic, $\det(M)\not\equiv0$, and $\nabla'$
is defined by 
\begin{equation}\label{eq:gauge}
A'=M^{-1}AM+M^{-1}dM.
\end{equation}
Also, given a rank one meromorphic connection $(L,\zeta)$ over $C$, we can consider the {\bf twist}
$(E',\nabla')=(E\otimes L,\nabla\otimes\zeta)$ locally defined by $A'=A+\omega I$ where $\zeta=d+\omega$ 
and $I$ is the identity matrix. 
All these transformations can be equivalently considered locally, in the holomorphic/meromorphic setting.
Note that they can add, simplify or delete singular points of $\nabla$.
We simply call {\bf bundle transformation} the combination of these two kinds of transformations.
They can be used to trivialize the vector bundle, or also to minimize the support and order of poles.
In general, we cannot do this simultaneously, except on $C=\mathbb P^1$ (see Dekker's Theorem).
We will also use local/global biholomorphic/bimeromorphic bundle transformations depending
of the nature of $\zeta$ and $M$.

\subsection{Local formal data}
At the neighborhood of a singular point $x=0$ on $C$, 
up to bimeromorphic bundle transformation and change of coordinate $x\to\varphi(x)$, 
we are in one of the following models:
\begin{equation}\label{eq:ModelSing}
\begin{matrix}
\Log\hfill\hfill & A=\begin{pmatrix}\frac{\theta}{2}&0\\ 0&-\frac{\theta}{2}\end{pmatrix}\frac{dx}{x}, \hfill\hfill
& \theta\in\mathbb C\setminus\mathbb Z
& \text{logarithmic non resonant}\hfill\begin{pmatrix}0\\ \theta\end{pmatrix}\\
\Logres & A=\begin{pmatrix}\frac{n}{2}&x^n\\ 0&-\frac{n}{2}\end{pmatrix}\frac{dx}{x}, \hfill\hfill
& n\in\mathbb Z_{\ge0}
& \text{logarithmic resonant}\hfill\begin{pmatrix}0\\ n\end{pmatrix}\\
\Irrun\hfill\hfill & A=\begin{pmatrix}\frac{1}{2}&0\\ 0&-\frac{1}{2}\end{pmatrix}\left(\frac{dx}{x^{k+1}}+\theta\frac{dx}{x}\right)+\tilde A,
& \left\{\begin{matrix} k\in\mathbb Z_{>0}\\ \theta\in\mathbb C\end{matrix}\right.  
& \text{irregular unramified case}\hfill\begin{pmatrix}k\\ \theta\end{pmatrix}\\
\Irram & A=\begin{pmatrix}0&x\\ 1&0\end{pmatrix}\frac{dx}{x^{k+1}}+\tilde A,\hfill\hfill
& k\in\mathbb Z_{>0}  
& \text{irregular ramified case}\begin{pmatrix}k-\frac{1}{2}\\ 0\end{pmatrix}
\end{matrix}
\end{equation}
where $\tilde A$ is holomorphic. The matrix column on the right will be explained later.
The order of pole is minimal up to bimemorphic bundle transformation in all these models,
and it is therefore an invariant.
We call $\theta$ the {\bf exponent}; we set $\theta=n$ in the logarithmic non diagonal case $\Logres$, and
$\theta=0$ in the irregular ramified case $\Irram$.
In fact, only $\cos(2\pi\theta)$ really makes sense up to bimeromorphic bundle transformations, since 
$\theta$ can be shifted by integers under birational bundle transformations;
moreover, the variable permutation $y_1\leftrightarrow y_2$ in $Y$ changes the sign of $\theta$.

We now define the Katz {\bf irregularity index} $\kappa\in\frac{1}{2}\mathbb Z_{\ge0}$ by 
\begin{itemize}
\item $\kappa=0$ in the logarithmic case $\Log$ and $\Logres$,
\item $\kappa=k$ in the irregular unramified case $\Irrun$,
\item $\kappa=k-\frac{1}{2}$ in the irregular ramified case $\Irram$.
\end{itemize}
The main property of $\theta$ and $\kappa$ is that they are multiplicative under ramified cover:

\begin{prop}\label{Prop:ramificationindex}
If $\varphi(x)=x^n$, $n\in\mathbb Z_{>0}$, then $(\tilde E,\tilde\nabla)=\varphi^*(E,\nabla)$ has, 
up to bundle transformation, the following invariants
$$ \tilde \kappa=n\kappa\ \ \ \text{and}\ \ \ \tilde\theta=n\theta.$$
In particular, the class of irregular singular points is characterized by $\kappa\not=0$ and is stable
under ramified covers. 
\end{prop}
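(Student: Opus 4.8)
The plan is to reduce everything to a purely local computation at the singular point and to run through the four local models of \eqref{eq:ModelSing} one at a time. Since $\kappa$ and the class of $\theta$ are invariants of the germ of connection, it is enough to show that, for $\varphi(x)=x^n$, the pull-back of each normal form is bimeromorphically equivalent, after at most a coordinate rescaling $t\mapsto c\,t$, to one of the models again, with $\tilde\kappa=n\kappa$ and $\tilde\theta=n\theta$. The elementary facts I would use throughout are $\varphi^*\tfrac{dx}{x}=n\tfrac{dt}{t}$, $\varphi^*\tfrac{dx}{x^{k+1}}=n\tfrac{dt}{t^{nk+1}}$, and that $\varphi^*$ of a holomorphic germ is again holomorphic.

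First I would dispose of the easy cases. For $\Log$ the pull-back is $\diag(\tfrac{n\theta}{2},-\tfrac{n\theta}{2})\tfrac{dt}{t}$, and for $\Logres$ with integer exponent $m$ it is $\begin{pmatrix}\frac{nm}{2}&nt^{nm}\\ 0&-\frac{nm}{2}\end{pmatrix}\frac{dt}{t}$, whose upper-right coefficient is rescaled back to $1$ by a constant diagonal gauge; both are then in the claimed form with $\tilde\kappa=0$ and $\tilde\theta=n\theta$. For the irregular unramified model $\Irrun$, the pull-back is $\diag(\tfrac n2,-\tfrac n2)\tfrac{dt}{t^{nk+1}}+\diag(\tfrac{n\theta}{2},-\tfrac{n\theta}{2})\tfrac{dt}{t}+(\text{hol.})$, and the biholomorphic rescaling $t\mapsto n^{1/(nk)}t$ restores the leading coefficient $\tfrac12$ while fixing the $\tfrac{dt}{t}$-term and preserving holomorphy of the remainder; so this lands in $\Irrun$ with $\tilde k=nk$ and $\tilde\theta=n\theta$, i.e. $\tilde\kappa=nk=n\kappa$.

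The genuinely delicate case, and what I expect to be the main obstacle, is the irregular ramified model $\Irram$ (where $\theta=0$): the pull-back of its leading term, $n\begin{pmatrix}0&t^n\\ 1&0\end{pmatrix}\frac{dt}{t^{nk+1}}$, is not in normal form, and I must show it returns to $\Irram$ with the expected pole order, or, when $n$ is even, to $\Irrun$ with $\tilde\theta=0$. For $n$ odd I would conjugate by the meromorphic gauge $M=\diag(t^{(n-1)/2},1)$: this brings the leading term to $n\begin{pmatrix}0&t\\ 1&0\end{pmatrix}\frac{dt}{t^{nk-(n-1)/2+1}}$, produces an extra logarithmic term $\diag(\tfrac{n-1}{2},0)\tfrac{dt}{t}$, and keeps the remainder holomorphic; since the model list of \eqref{eq:ModelSing} is exhaustive and the new leading part is genuinely ramified (its eigenvalues involve $\sqrt t$), the germ has to be of type $\Irram$, so $\tilde\theta=0$ automatically, and reading off its minimal pole order gives $\tilde\kappa=\tilde k-\tfrac12$ with $\tilde k=nk-\tfrac{n-1}{2}$, i.e. $\tilde\kappa=n(k-\tfrac12)=n\kappa$. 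For $n=2m$ even I would factor $\varphi$ as $t\mapsto w=t^m\mapsto x=w^2$: pulling $\Irram$ back by the square map and conjugating first by $\diag(w,1)$ and then by a constant gauge diagonalizing $\begin{pmatrix}0&1\\ 1&0\end{pmatrix}$ yields $\Irrun$ with leading term $\diag(2,-2)\tfrac{dw}{w^{2k}}$ and residue $\tfrac12\begin{pmatrix}1&1\\ 1&1\end{pmatrix}\frac{dw}{w}$; the leading term being non-resonant, a formal gauge transformation removes the off-diagonal part of the residue, leaving $\diag(\tfrac12,\tfrac12)\tfrac{dw}{w}$, so $\theta$ doubles to $0$ and $\kappa$ doubles to $2k-1$, and applying the already-settled unramified case to $t\mapsto t^m$ gives $\tilde\kappa=m(2k-1)=n\kappa$ and $\tilde\theta=0$. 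The one point needing real care is this absorption of the spurious logarithmic term against a non-resonant irregular leading part; everything else is bookkeeping.

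Once all four models are checked, the statement for $\varphi(x)=x^n$ follows, multiplicativity under an arbitrary ramified cover is obtained by composing these local computations over an atlas of $\mathbb P^1$, and the last assertion is immediate since $n\kappa=0$ exactly when $\kappa=0$. As a side remark, the multiplicativity of $\kappa$ alone admits a quicker argument via the Sturm--Liouville form $u''=g(x)u$: the substitution $x=t^n$, $u=t^{(n-1)/2}v$ turns it into $v''=\tilde g(t)v$ with $\tilde g(t)=n^2t^{2n-2}g(t^n)+\tfrac{n^2-1}{4t^2}$, so a pole of order $2\kappa+2>2$ of $g$ at the origin produces a pole of order $2n\kappa+2$ of $\tilde g$; but this does not see $\theta$, so I would still need the model-by-model analysis above for the full statement.
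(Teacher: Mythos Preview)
Your proposal is correct and is precisely the routine case-by-case verification the paper has in mind: the paper's entire proof reads ``The proof is straightforward.''  Your treatment of the $\Irram$ case (splitting into $n$ odd versus $n$ even, and in the even case appealing to the non-resonant splitting lemma to absorb the spurious logarithmic term) is more explicit than anything the paper writes, but it is exactly the sort of computation being alluded to; note also that Remark~\ref{Rem:ramificationindex} in the paper confirms your even-$n$ conclusion that one lands in $\Irrun$ with $\tilde\theta$ an integer (hence $0$ after a further elementary transformation).
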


The proof is straightforward.

\begin{remark}\label{Rem:ramificationindex}In case of model $\Log$ with exponent $\theta=\frac{p}{q}$
rational, the pole becomes apparent after a ramification $\varphi(x)=x^n$ with $n=mq$ a multiple of $q$:
we get $\tilde\theta=mp\in\mathbb Z$ and the pole disappear after bundle transformation.
In a similar way, in case of model $\Irram$, when $n=2m$ is even, we get after bundle transformation
an unramified pole $\Irrun$ with $\tilde\theta=m$, that can be normalized to $\tilde\theta=0$ 
after an additional bundle transformation. 
\end{remark}

In irregular models $\Irrun$-$\Irram$, we can further
kill the holomorphic part $\tilde A$ by formal bundle transformation; however it is divergent in general.
This already shows that $\kappa$ and $\theta$ are the only formal invariant, i.e. that can be 
algebraically computed. In the sequel, we denote by $\bar\kappa:=k$ the smallest integer $\ge\kappa$, i.e.  $\bar\kappa=\kappa$ or $\kappa+\frac{1}{2}$. There are other invariants, called Stokes matrices (see below), 
whose computation is very transcendental.

To resume, each singular point is characterized, up to base change and formal bundle transformation, 
by its irregularity $\kappa\in\frac{1}{2}\mathbb Z_{\ge0}$, and $\theta\in\mathbb C$, its exponent. 
We will call {\bf local formal data} of a differential
equation the matrix 
\begin{equation}\label{eq:LocForData}
\begin{pmatrix}\kappa_1&\cdots&\kappa_n\\
\theta_1&\cdots&\theta_n\end{pmatrix}\ \ \ \text{where}\ \ \ 
\left\{\begin{matrix}
\kappa_i\in\mathbb Z_{\ge0}&\Rightarrow&\theta_i\in\mathbb C\\
\kappa_i\in-\frac{1}{2}+\mathbb Z_{>0}&\Rightarrow&\theta_i=0
\end{matrix}\right.
\end{equation}
specifying the formal type at each singular point. In (\ref{eq:ModelSing}), the formal type is indicated on the rightside.

\subsection{Normalization}

\begin{prop}\label{prop:normalizedequation}
Any (global) connection $(E,\nabla)$ is equivalent, up to birational bundle tranformation,
to a $\SL$-connection $(E_0,\nabla_0)$ which locally fits with one of the models 
$\Log$, $\Logres$, $\Irrun$ or $\Irram$
at any pole along the curve, for a convenient choice of coordinate $x$ and trivialization of $E$.
Such a reduction is not unique, and we can moreover assume, up to additional birational bundle transformation,
that formal data satisfies
$$0\le\Re(\theta_i)\le\frac{1}{2}\ \ \ \text{for}\ i=1,\ldots,n-1\ \ \ \text{and}\ \ \ 0\le\theta_n<1.$$
If $\kappa_i,\theta_i\not\in\frac{1}{2}\mathbb Z$, then this latter reduction is unique (up to permution of poles).
\end{prop}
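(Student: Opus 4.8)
I would establish the three assertions of the statement in turn: existence of the $\SL$ normal form with minimal local models, the stated range for the exponents, and uniqueness under the non-degeneracy hypothesis. \emph{Existence of an $\SL$ model.} First normalize each pole separately: by the local formal classification of rank two meromorphic connections (Levelt--Turrittin; see \cite{vdPS,MRGalois}), after a local bimeromorphic bundle transformation and a suitable choice of local coordinate, the connection near $p_i$ has a diagonal polar part, a residue, and a holomorphic remainder in the unramified case, or is of ramified type; comparing with (\ref{eq:ModelSing}) this is exactly one of $\Log$, $\Logres$, $\Irrun$, $\Irram$, and it realizes the minimal pole order recorded there. Each such local transformation is meromorphic and generically invertible, hence extends by the identity to a global birational bundle transformation supported near $p_i$; composing over the finitely many poles yields a global model. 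To make it an $\SL$-connection, twist by a rank one meromorphic connection $(L,\zeta)$ with $(L,\zeta)^{\otimes 2}\cong(\det E,\tr\nabla)^{\vee}$ --- such a square root exists because $\mathrm{Jac}(C)$ is divisible, possibly after one elementary transformation at a pole to make $\deg E$ even --- so that the determinant connection becomes trivial. What makes this compatible with the previous step is that the trace of each model in (\ref{eq:ModelSing}) is holomorphic at the pole (the polar and residual parts of the two eigenvalues being opposite), so the correcting twist $\omega=-\tfrac12\tr A$ is holomorphic at every pole and only modifies the holomorphic remainder $\tilde A$, leaving the four models intact.

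\emph{Range of the exponents.} On the resulting $\SL$ model the remaining freedom consists of: permuting the poles; elementary transformations at a single pole, which shift $\theta_i\mapsto\theta_i\pm1$ (in model $\Log$, and analogously in the others); the variable swap $y_1\leftrightarrow y_2$, which sends $\theta_i\mapsto-\theta_i$; and twists by logarithmic rank one connections, used only to restore the $\SL$ normalization after an elementary transformation has changed $\deg E$. Since the coset $(\pm\theta_i+\mathbb Z)\subset\mathbb C/\mathbb Z$ always meets the strip $0\le\Re\le\tfrac12$, one can arrange $0\le\Re(\theta_i)\le\tfrac12$ for $i=1,\dots,n-1$; spending the leftover freedom at the last pole gives $0\le\theta_n<1$ (that is, $0\le\Re(\theta_n)<1$). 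This is elementary arithmetic in $\mathbb C/\mathbb Z$.

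\emph{Uniqueness.} Assume now $\kappa_i,\theta_i\notin\tfrac12\mathbb Z$ for all $i$ and let $\psi$ be a birational bundle transformation between two such normalizations $(E_0,\nabla_0)$ and $(E_0',\nabla_0')$. Away from the poles, a zero or a pole of $\psi$ would produce a spurious singular point of $\nabla_0'$, so $\psi$ is biholomorphic and invertible there. At a pole $p_i$, because $\theta_i\notin\tfrac12\mathbb Z$ the representative of $\theta_i$ in the fundamental strip is rigid --- the sign is pinned down and no integer shift is permitted --- so the local orders of $\psi$ along the eigendirections must vanish; here the hypothesis $\kappa_i\notin\tfrac12\mathbb Z$ (unramified pole) also removes the extra flexibility of ramified points noted in Remark \ref{Rem:ramificationindex}, and $\theta_i\notin\mathbb Z$ rules out resonance and apparent-singularity ambiguities. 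Hence $\psi$ is a global biholomorphic bundle transformation, and the normalization is unique up to isomorphism and relabelling of the poles.

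\emph{Main obstacle.} The delicate point is this last step: making precise, pole by pole, exactly which local bimeromorphic bundle transformations preserve both the shape in (\ref{eq:ModelSing}) and the normalized range of $\theta_i$, and then ruling out nontrivial \emph{global} transformations that might preserve all local models simultaneously --- here the $\SL$ constraint (any diagonal $\diag(r,1/r)$ with non-constant $r$ introduces new poles) does the global bookkeeping. By contrast, the existence part is a globalization of the classical local theory combined with the twisting trick, and the exponent normalization is routine.
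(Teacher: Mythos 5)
Your proposal follows essentially the same route as the paper's proof: reduction to the four local models by the local formal theory (the paper does this explicitly via $\nabla$-adapted elementary transformations), a twist by a square root of $(\det E,\tr\nabla)$ to reach $\SL$-form (possible exactly when $\deg E$ is even), normalization of the exponents by integer shifts and sign flips, and uniqueness from the rigidity of the residual freedom. The existence and uniqueness parts are fine at the paper's own level of detail.

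The one concrete slip is in the exponent normalization, and it sits exactly at the point that forces the asymmetric conclusion $0\le\Re(\theta_i)\le\frac{1}{2}$ for $i<n$ but only $0\le\theta_n<1$. You write that twists by logarithmic rank one connections are ``used only to restore the $\SL$ normalization after an elementary transformation has changed $\deg E$''. But a twist by $(L,\zeta)$ multiplies $\det E$ by $L^{\otimes 2}$, hence changes $\deg E$ by an even integer and can never repair the parity broken by a single elementary transformation. The actual constraint, which the paper states explicitly, is that the integer shifts $\theta_i\mapsto\theta_i+n_i$ realizable while remaining in $\SL$-form must satisfy $\sum_i n_i\in 2\mathbb Z$. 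This evenness is the only reason the last exponent cannot always be pushed into the half-strip: after placing all $n$ exponents in $0\le\Re\le\frac{1}{2}$ the total shift may be odd, and the correction costs one extra unit shift at a single pole, whence $0\le\theta_n<1$. As written, your ``leftover freedom at the last pole'' is unexplained --- if a twist really could fix an odd $\deg E$, nothing would prevent normalizing all $n$ exponents into the half-strip, and the statement would say something stronger than it does. This is a small repair, but it is the real content of the second assertion.
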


We call {\bf normalized equation} a connection like $(E_0,\nabla_0)$ in the statement.

\begin{proof}The algorithm is as follows. We alternate birational bundle transformations and twists in order to simplify
the poles (minimize) and then apply a final twist to get the  $\SL$-form. Let us firstly dicuss the second step.
Given a connection $(E,\nabla)$, then its trace admits a square root
$$(\det(E),\tr(\nabla))=(L,\zeta)^{\otimes 2}$$
if, and only if, $\deg(E)$ is even. In that case, $(E,\nabla)\otimes (L,\zeta)^{\otimes(-1)}$ is in $\SL$-form.

The first step is done by applying
successive elementary transformations $E'\dashrightarrow E$ at each pole $p$, i.e. of the form 
$$Y=M\cdot Y'\ \ \ \text{with}\ \ \ M=\begin{pmatrix}1&0\\ 0&x\end{pmatrix}$$
in convenient local coordinate $x$ and trivializations $Y,Y'$. 
In a more intrinsic way, if $l\subset E\vert_p$ denotes the direction spanned by $Y=\begin{pmatrix}1\\ 0\end{pmatrix}$,
then $E'$ is defined as the locally free sheaf whose sections are those sections of $E$ which, in restriction to $E\vert_p$,
belong to the direction $l$. We note that $E$ and $E'$ are canonically isomorphic over the complement $C\setminus p$;
moreover, $\deg(E')=\deg(E)+1$.
Now, if $\nabla$ is a connection on $E$ with a pole at $p$, we say that the elementary transformation is $\nabla$-adapted
if $l$ is an eigendirection of the leading term of the matrix connection; it is equivalent to the fact that
the induced connection $\nabla'$ on $E'$ has a pole of order not greater than $\nabla$ at $p$:
$$
A=\begin{pmatrix}a(x)&b(x)\\ c(x)&d(x)\end{pmatrix}\frac{dx}{x^{k+1}}\ \ \ \Rightarrow\ \ \ 
A'=M^{-1}AM+M^{-1}dM=\begin{pmatrix}a(x)&xb(x)\\ \frac{c(x)}{x}&d(x)+x^k\end{pmatrix}\frac{dx}{x^{k+1}}.$$
We now proceed to simplify poles by applying adapted elementary transformations.
\begin{itemize}
\item If $A(0)$ is scalar, i.e. of the form $I\frac{dx}{x^{k+1}}$, then it can be killed by a twist, and the order $k$ decreases.
\item If $A(0)$ is semi-simple but not scalar, then we can reduce to model $\Log$, $\Logres$ or $\Irrun$ 
by biholomorphic bundle equivalence and change of coordinate
(with possibly $\theta\in\mathbb Z$ in $\Log$, in which case the pole can be deleted by birational bundle equivalence)
and passing to the $\SL$-form.
\item If $A(0)$ is not semi-simple, then we apply a $\nabla$-adapted elementary transformation;
if $\nabla$ and $\nabla'$ have same order at $p$, then one can check that we are in models $\Logres$ or 
$\Irram$ up to holomorphic bundle equivalence and change of coordinate. 
\end{itemize}
Finally, after finitely many elementary steps, we arrive at one of the models, up to biholomorphic bundle transformation.
At the end, if $\deg(E)$ is even, we obtain the $\SL$-form after a twist. If not, we can apply a $\nabla$-adapted 
elementary transformation at one of the poles to shift $\deg(E)$ by $+1$, so that it becomes even, and then normalize by a twist. 
The first part of the statement is proved. 

The lack of unicity comes from 
\begin{itemize}
\item the possibility of performing an even number of additional $\nabla$-adapted elementary transformation,
\item the possibility of changing the $\SL$-normalization by twisting with a $2$-torsion holomorphic connection
(this freedom does not occur on $C=\mathbb P^1$).
\end{itemize}
The first operation has the effect to shift exponents $\theta_i\mapsto\theta_i+n_i$, $n_i\in\mathbb Z$, with $\sum\theta_i\in 2\mathbb Z$
(except in case $\Irram$ where $\theta$ is always zero). This does not affect neither the type $\Log$, $\Logres$, $\Irrun$ or $\Irram$
of the pole, nor irregularity $\kappa$, but only $\theta$. Recall also that $\theta_i$'s are defined up to a sign. We promptly 
deduce that all $\theta_i$ can be normalized with $0\le\Re(\theta_i)\le\frac{1}{2}$ by birational bundle transformation:
first put $-\frac{1}{2}\le\Re(\theta_i)\le\frac{1}{2}$ by shifting it by integers, and then switch to $-\theta_i$ if necessary.
But since we need $\det(E)$ even for the final $\SL$-normalization, then we possibly need to apply an additional
elementary transformation, and then shift one of the $\theta_i$'s by one.
\end{proof}

\begin{remark}\label{rem:normalizedequation}
If one of the poles has formal data of one of the following types
$$\begin{pmatrix}\kappa\\ \theta\end{pmatrix}=\begin{pmatrix}0\\ \frac{1}{2}\end{pmatrix}\ \ \ \text{or}\ \ \ 
\begin{pmatrix}\frac{1}{2}\\ 0\end{pmatrix}$$
then one can furthermore assume that 
$$0\le\Re(\theta_i)\le\frac{1}{2}\ \ \ \text{for all}\ i=1,\ldots,n.$$
Indeed, it suffices to note that these two types of poles are invariant by some $\nabla$-adapted 
elementary transformation. So the last one needed to get $\det(E)$ even can be performed 
on this pole. 

Also note, in case $C=\mathbb P^1$ and no more than one pole is of the above type, that the normalization
is unique in that case.
\end{remark}

\subsection{Irregular curve}\label{sec:IrregCurve}
To encode the global formal structure of the differential equation, we have to take
into account the conformal type of the curve $C$ (when genus $g>0$), and the position
of singular points. 
But it is important to recall here that, to reach the models $\Irrun$-$\Irram$,
we really need a change of coordinate $x:=\varphi(x)$ in general, in order to normalize the principal part 
of the differential equation. In fact, it is enough to consider 
$\varphi$ polynomial of order $\bar\kappa$ (recall $\bar\kappa\in\mathbb Z_{\ge0}$ is $\kappa$ or $\kappa+\frac{1}{2}$);
in the logarithmic case $\kappa=0$, no change is needed. In other words, for irregular singular points,
there are $\bar\kappa$ other formal invariants when considering only bundle transformations, 
and they can be killed by base change. For this reason, the conformal type of the base curve $C$ should
be enriched with the additional data, at each irregular point $t_i$, of a $\bar\kappa_i$-jet of coordinate 
$x_i:(C,t_i)\to(\mathbb C,0)$ in which the 
equation can be reduced to the models $\Irrun$-$\Irram$. We call {\bf irregular curve}
the data $X:=(C,D,\{x_i\})$. The deformation space of the irregular curve locally identifies with $H^0((\Omega^1)^{\otimes 2}(D))$
and has dimension 
\begin{equation}\label{eq:dimirrTeich}
T:=3g-3+\deg(D)=3g-3+\sum_{i=1}^n(1+\bar\kappa_i).
\end{equation}
There is a global deformation space which is a principal bundle over the moduli space $M_{g,n}$ 
for the punctured curve $(C,\vert D\vert)$, whose fiber is the group product of diffeomorphism jets.
Once we know the irregular curve and local formal data, it remains to add some extra analytic invariants
given by the monodromy representation together with Stokes data.
They can only be computed algebraically from the differential equation, i.e. by means of above invariants, 
when $T\le3$. For larger $T$, one can deform the equation with fixed irregular curve and formal data 
and these invariants we are going to describe are transcendental functions of the coefficients of the equation.

\subsection{Monodromy and Stokes matrices}\label{sec:MonodStokes}
The {\bf monodromy representation} of the equation is a group morphism
$$\rho_\nabla:\pi_1(C\setminus\vert D\vert,t_0)\to\mathrm{GL}_2(\mathbb C)$$
defined as the monodromy of a local basis of solutions $B_0$.
More precisely, we can cover $C\setminus\vert D\vert$ (where $\vert D\vert$ is the support of $D$, i.e. set of poles)
by open sets $U_i$ over each of which the differential equation admits a basis of solutions $B_i$;
on overlappings $U_i\cap U_{j}$, we get $B_i=M_{i,j}B_{j}$ for a transition matrix $M_{i,j}\in\mathrm{GL}_2(\mathbb C)$.
The collection $(M_{i,j})$ defines an element $\rho_\nabla$ of 
$$H^1(C\setminus\vert D\vert,\mathrm{GL}_2(\mathbb C))\simeq\mathrm{Hom}(\pi_1(C\setminus\vert D\vert),\mathrm{GL}_2(\mathbb C))$$
(point of view of {\bf local systems}).
In fact, any loop $\gamma\in\pi_1(C\setminus\vert D\vert,x_0)$ based at $x_0$ can be covered, by compacity, 
by a finite number of such open sets $U_i$, where $\gamma$ crosses successively these open sets following 
the index order $i=0,1,\ldots,m$. Then, the analytic continuation of the initial basis of solutions $B_0$ writes:
$$\underbrace{B_0}_{\text{on}\ U_0}=\underbrace{M_{0,1}B_1}_{\text{on}\ U_1}=\underbrace{M_{0,1}M_{1,2}B_2}_{\text{on}\ U_2}=\cdots
=\underbrace{M_{0,1}M_{1,2}\cdots M_{m,0}B_0}_{\text{(back to $U_0$)}}=:B_0^\gamma,$$
$$\text{i.e.}\ \ \ B_0^\gamma=M^\gamma B_0\ \text{on}\  U_0,\ \ \ \text{with}\ M^\gamma:=M_{0,1}M_{1,2}\cdots M_{m,0}.$$
We then define the monodromy morphism by setting $\rho_\nabla(\gamma)= M^\gamma$. It depends on the choice 
of the basis $B_0$: it is well-defined up to conjugacy by an element $M\in\mathrm{GL}_2(\mathbb C)$:
$$B_0=M\tilde B_0\ \ \ \Rightarrow\ \ \ \tilde M^\gamma=M^{-1}M^\gamma M \ \ \ \Rightarrow\ \ \ \tilde\rho_\nabla=M^{-1}\rho_\nabla M.$$
For instance, in the first logarithmic model $\Log$, one easily calculate
$$B_0(x)=\begin{pmatrix}x^{-\frac{\theta}{2}}&0\\ 0&x^{\frac{\theta}{2}}\end{pmatrix}\ \ \ \text{and}\ \ \ M^\gamma=\begin{pmatrix}e^{i\pi\theta}&0\\ 0&e^{-i\pi\theta}\end{pmatrix}$$
where $\gamma(t)=e^{2i\pi t}$, $t\in[0,1]$.

For irregular singular points, it is more subtle since there are {\bf Stokes matrices}, playing the role of infinitesimal monodromy.
In the unramified case, the model $\Irrun$ with $\tilde A\equiv0$ has the fundamental basis and monodromy
$$B_0(x)=\begin{pmatrix}x^{-\frac{\theta}{2}}e^{\frac{1}{2kx^k}}&0\\ 0&x^{\frac{\theta}{2}}e^{-\frac{1}{2kx^k}}\end{pmatrix}\ \ \ \text{and}\ \ \ M^\gamma=\begin{pmatrix}e^{i\pi\theta}&0\\ 0&e^{-i\pi\theta}\end{pmatrix}$$
We call it the {\bf formal monodromy}. In the general case $\tilde A\not\equiv0$, 
equation $\Irrun$ cannot be reduced to the formal normal form $\tilde A\equiv0$
by holomorphic bundle transformation, but only by (generically divergent) formal bundle transformation;
we thus get a formal fundamental basis $\hat B(x)$ with monodromy $M^\gamma$. 
The same reduction can also be done holomorphically over each of the following $2\kappa=2k$ sectors
$$V_0:=\left\{x\ ;\ 0<\vert x\vert<r,\ \ \ \vert \arg(x)-\frac{\pi}{2k}\vert<\frac{\pi}{k}-\epsilon\right\}$$
(where $r,\epsilon>0$ are small enough) and
$$V_l:=\left\{x\ ;\ e^{-i\pi\frac{l}{k}}x\in V_0 \right\},\ l=0,\ldots,2k-1.$$
Note that these sectors are covering the punctured disc $\{0<\vert x\vert<r\}$. 
Going back to the initial model $\Irrun$, there exists a fundamental matrix $B_l$ over each sector $V_l$,
asymptotic to the formal one $\hat B(x)$.
Then, we have
$B_l=S_lB_{l+1}$ with $S_l\in\mathrm{GL}_2(\mathbb C)$ is unipotent, upper-triangular when $l$ is even,
and lower-triangular for $l$ odd. Therefore, the monodromy around $x=0$ canonically splits as
\begin{equation}\label{eq:StokesDecomp}
M^\gamma=\underbrace{
\begin{pmatrix}e^{i\pi\theta}&0\\ 0&e^{-i\pi\theta}\end{pmatrix}}_{\text{formal monodromy}}
\underbrace{\begin{pmatrix}1&s_1\\ 0&1\end{pmatrix}\begin{pmatrix}1&0\\ t_1&1\end{pmatrix}\cdots
\begin{pmatrix}1&s_k\\ 0&1\end{pmatrix}\begin{pmatrix}1&0\\ t_k&1\end{pmatrix}}_{\text{Stokes matrices}}
\end{equation}
and this {\bf Stokes decomposition} is unique up to conjugacy by a diagonal matrix (the choice of $\hat B(x)$), 
so that equivalent decompositions write
$$\begin{pmatrix}e^{i\pi\theta}&0\\ 0&e^{-i\pi\theta}\end{pmatrix}
\begin{pmatrix}1&cs_1\\ 0&1\end{pmatrix}\begin{pmatrix}1&0\\ c^{-1}t_1&1\end{pmatrix}\cdots
\begin{pmatrix}1&cs_k\\ 0&1\end{pmatrix}\begin{pmatrix}1&0\\ c^{-1}t_k&1\end{pmatrix},\ \ \ \text{for}\ c\in\mathbb C^*.$$
In fact, any two models $\Irrun$ with the same $k$ and $\lambda$
are equivalent by holomorphic bundle transformation if, and only if, their Stokes decomposition coincide up to diagonal conjugacy.
Moreover, any $2k$-uple $(s_1,\ldots,s_k,t_1,\ldots,t_k)$ is realisable as Stokes decomposition of a model $\Irrun$.
In particular, the differential equation is equivalent to the formal model by holomorphic bundle transformation
if, and only if, $s_l=t_l=0$ for $l=1,\ldots,k$.
In the {\bf ramified case}, we have a similar story and Stokes decomposition looks like 
\begin{equation}\label{eq:ramStokesDecomp}
M^\gamma=\underbrace{
\begin{pmatrix}0&1\\ -1&0\end{pmatrix}}_{\text{formal monodromy}}
\underbrace{\begin{pmatrix}1&s_1\\ 0&1\end{pmatrix}\begin{pmatrix}1&0\\ t_1&1\end{pmatrix}\cdots
\begin{pmatrix}1&s_k\\ 0&1\end{pmatrix}}_{\text{Stokes matrices}}
\end{equation}
where $k=\kappa+\frac{1}{2}$: there are $2\kappa$ matrices (like in the unramified case where irregularity is $\kappa=k$). 
This decomposition characterizes the analytic equivalence class of the differential 
equation, and any such Stokes data can be realized. We refer to \cite{MR,MRGalois,Krichever,BMM,vdPS} for more details.

\subsection{Differential Galois group}

The (differential) Galois group of a normalized differential equation $(C,E,\nabla)$ can be computed 
from the monodromy and Stokes data (see \cite{MRGalois}). In the logarithmic case, 
the Galois group is just the Zariski closure of the monodromy goup in $\SL(\mathbb C)$.
Recall that algebraic subgroups of $\SL(\mathbb C)$ are in the following list (up conjugacy):
\begin{equation}\label{eq:listGaloisGroup}
\begin{matrix}
\text{infinite} & 
\left\{\begin{matrix} C_\infty=\left\{\begin{pmatrix}\lambda&0\\ 0&\lambda^{-1}\end{pmatrix}\ ;\ \lambda\in\mathbb C^*\right\},\hfill\hfill\\
D_\infty=\left\{\begin{pmatrix}\lambda&0\\ 0&\lambda^{-1}\end{pmatrix},\begin{pmatrix}0&\lambda\\ -\lambda^{-1}&0\end{pmatrix},\ ;\ \lambda\in\mathbb C^*\right\}=\langle C_\infty,\begin{pmatrix}0&1\\-1&0\end{pmatrix}\rangle,\\
P_\infty=\left\{\begin{pmatrix}1&\mu\\ 0&1\end{pmatrix}\ ;\ \mu\in\mathbb C\right\},\hfill\hfill\\
T_\infty=\left\{\begin{pmatrix}\lambda&\mu\\ 0&\lambda^{-1}\end{pmatrix}\ ;\ \lambda\in\mathbb C^*,\ \mu\in\mathbb C\right\},\hfill\hfill\\
T_n=\left\{\begin{pmatrix}\lambda&\mu\\ 0&\lambda^{-1}\end{pmatrix}\ ;\ \lambda^n=1,\ \mu\in\mathbb C\right\},\hfill\hfill\\
\SL(\mathbb C)\hfill\hfill\end{matrix}\right.  \\
\text{finite} & 
\left\{\begin{matrix} C_n=\left\{\begin{pmatrix}\lambda&0\\ 0&\lambda^{-1}\end{pmatrix}\ ;\ \lambda^n=1\right\},\hfill \# n\\
D_n=\left\{\begin{pmatrix}\lambda&0\\ 0&\lambda^{-1}\end{pmatrix},\begin{pmatrix}0&\lambda\\ -\lambda^{-1}&0\end{pmatrix},\ ;\ \lambda^n=1\right\},\ \# 2n\\
\text{tetrahedral}\simeq A_4\ltimes\mathbb Z/2,\hfill \#24\\
\text{octahedral}\simeq S_4\ltimes\mathbb Z/2,\hfill \#48\\
\text{icosahedral}\simeq A_5\ltimes\mathbb Z/2,\hfill\#120
\end{matrix}\right.
\end{matrix}
\end{equation}
In the irregular case, we first define the local Galois group as
\begin{itemize}
\item at an unramified pole, it is generated in the same basis as (\ref{eq:StokesDecomp}) by the {\bf exponential torus}
$$C_\infty=\left\{\begin{pmatrix}\lambda&0\\ 0&\lambda^{-1}\end{pmatrix}\ ;\ \lambda\in\mathbb C^*\right\}$$
together with the Stokes matrices: we obtain $C_\infty$, $T_\infty$ or $\SL(\mathbb C)$
depending if all Stokes matrices are trivial, if one over two is trivial (i.e. all $s_i$'s or all $t_i$'s), or else.

\item at a ramified pole, it is generated in the same basis as (\ref{eq:ramStokesDecomp}) by the exponential torus, the permutation matrix
$$\begin{pmatrix}0&1\\ -1&0\end{pmatrix}$$
and the Stokes matrices: we obtain $D_\infty$ or $\SL(\mathbb C)$
depending if all Stokes matrices are trivial, or not. \end{itemize}
The global Galois group is the Zariski closure in $\SL(\mathbb C)$ of all local Galois groups and the global monodromy group. In particular, irregular differential equations have always $>0$ dimension due to the exponential torus:
we can only have $C_\infty$, $D_\infty$, $T_\infty$ or $\SL(\mathbb C)$.

\subsection{Link with scalar equations}
On $C=\mathbb P^1$, meromorphic connections were historically defined by 
higher order {\bf scalar differential equations}, and it is sometimes more convenient to work with them.
Consider the differential equation 
\begin{equation}\label{eq:scalar}
u''+f(x)u'+g(x)u=0,
\end{equation}
 where $f,g$ rational/meromorphic, and $u'=\frac{du}{dx}$.
Then, thinking of $(u(x_0),u'(x_0))$ as the space of initial conditions at a generic point $x_0$, 
it is natural to associate the {\bf companion system}
\begin{equation}\label{eq:companion}
\nabla=d+A\ \ \ \text{with}\ \ \ A=\begin{pmatrix}0&-g\\ 1&-f\end{pmatrix}dx.
\end{equation}
Indeed, identifying the standard basis with $(u(x),u'(x))$, then the connection satisfies 
$\nabla\cdot u=u'$ and $\nabla\cdot u'=u''=-fu'-gu$. In the projective coordinate $y=-y_1/y_2$, 
it induces the Riccati equation $y'+y^2+fy+g=0$, and we recover the initial scalar equation 
by setting $y=u'/u$.
Conversely, given a more general system (\ref{eq:generalsystem}), then we can first apply a twist to set $\alpha=0$
in the matrix $A$, and then use a gauge transformation of the form
\begin{equation}\label{eq:gaugecyclic}
M=\begin{pmatrix}1&G\\0&F\end{pmatrix},\ \ \ F\not\equiv0,
\end{equation}
(and a twist) to reduce the matrix $A$ in the companion form (\ref{eq:companion}): set $Fdx=\gamma$ and $G=0$.
One can further reduce (\ref{eq:companion}), or accordingly (\ref{eq:scalar}), by gauge transformation (\ref{eq:gaugecyclic})
with $F\equiv1$, or equivalently setting $u:=u/\exp(\int G)$; by this way, we can arrive to the unique SL-form 
\begin{equation}\label{eqSLscalar}
u''=\frac{s(x)}{2}u,\ \ \ s=f'+\frac{f^2}{2}-g
\end{equation}
also called ``Sturm-Liouville operator''. In fact, $s(x)$ is the {\bf Schwarzian derivative} of the quotient of any two 
independant solutions $u_1(x),u_2(x)$ of the initial equation (\ref{eq:scalar})
$$s(x):=\{\varphi,x\}=\left(\frac{\varphi''}{\varphi'}\right)'-\frac{1}{2}\left(\frac{\varphi''}{\varphi'}\right)^2,\ \ \ \varphi=\frac{u_1}{u_2}.$$
The above reduction to scalar equation depends on the choice of coordinate $Y$, or more precisely on the choice 
of the so called ``cyclic vector'' $\begin{pmatrix}1\\0\end{pmatrix}$. For a general rank 2 meromorphic connection $(E,\nabla)$,
the reduction to 2nd order scalar equation depend on the choice of a line subbundle $L\subset E$.
It is however important to notice that the irregularity index of an arbitrary connection $\nabla$ 
(not necessarily of the form $\Log$-$\Logres$-$\Irrun$-$\Irram$)
is directly given by the order of poles of any reduction to scalar equation, namely
$$1+\kappa=\max\{\ord(f),\ord(g)/2\}.$$

\section{Confluent hypergeometric equations}\label{sec:ConfluentHypergeometric}
On $C=\mathbb P^1$, when the polar locus $D$ has degree $3$, the connection can be determined, 
up to bundle equivalence, by its local formal data. After base change (i.e. applying
a Moebius transformation in $x$-variable), we can reduce to the following list 
of classical scalar equations.
$$
\xymatrix{
{\begin{matrix}
\begin{pmatrix}0&0&0\\ \theta_0&\theta_1&\theta_\infty\end{pmatrix}\\
\text{Hypergeometric}
\end{matrix}}
\ar[r]
&
{\begin{matrix}
\begin{pmatrix}0&1\\ \theta_0&\theta_\infty\end{pmatrix}\\
\text{Kummer}
\end{matrix}}
\ar[r]\ar[rd]
&
{\begin{matrix}
\begin{pmatrix}2\\ \theta_\infty\end{pmatrix}\\
\text{Weber}
\end{matrix}}
\ar[rd]
&\\
&&
{\begin{matrix}
\begin{pmatrix}0&\frac{1}{2}\\ \theta_0&0\end{pmatrix}
\end{matrix}}
\ar[r]
&
{\begin{matrix}
\begin{pmatrix}\frac{3}{2}\\ 0\end{pmatrix}\\
\text{Airy}
\end{matrix}}
}$$

The {\bf Gauss hypergeometric equation}
\begin{equation}\label{eq:Gauss}
u''+\left(\frac{(a+b+1)x-c}{x(x-1)}\right)u'+\frac{ab}{x(x-1)}u=0
\end{equation}
has $3$ simple poles at $x=0,1,\infty$ with respective exponents $\theta_0=c-1$, $\theta_1=a+b-c$ and $\theta_\infty=a-b$.

The {\bf Kummer equation} (also called ``confluent hypergeometric'')
\begin{equation}\label{eq:Kummer}
u''+\left(\frac{c}{x}-1\right)u'-\frac{a}{x}u=0
\end{equation}
has a logarithmic pole at $x=0$ with exponent $\theta_0=c$, and an irregular point at $x=\infty$ having 
irregularity index $\kappa=1$ and exponent $\theta_\infty=2a-c$. Its SL-form
$$u''=\left(\frac{1}{4}+\frac{2a-c}{2x}+\frac{(c-1)^2-1}{4x^2}\right)u$$
is also known as Whittaker equation. A particular case is Bessel equation when $\lambda=0$.
The monodromy data can be described as follows:
$$M_0\cdot M_\infty=I\ \ \ \text{with}\ \ \ M_\infty=\begin{pmatrix}e^{i\pi\theta_\infty}&0\\ 0&e^{-i\pi\theta_\infty}\end{pmatrix}
\begin{pmatrix}1&s\\ 0&1\end{pmatrix}\begin{pmatrix}1&0\\ t&1\end{pmatrix}$$
$$\text{and}\ \ \ \mathrm{trace}(M_0)=2\cos(\pi\theta_0)=\mathrm{trace}(M_\infty)=2\cos(\pi\theta_\infty)+e^{i\pi\theta_\infty}st.$$
In particular, one easily check that are equivalent:
$$\text{the Galois group reducible}\ \Leftrightarrow\ st=0\ \Leftrightarrow\ \mathrm{trace}(M_0)=\mathrm{trace}(M_\infty)$$
$$\ \Leftrightarrow\ \theta_\infty=\pm\theta_0\mod 2\mathbb Z
\ \Leftrightarrow\ a\in\mathbb Z\ \text{or}\ c-a\in\mathbb Z.$$
To determine when the differential equation is totally reducible (i.e. with diagonal monodromy),
we refer to \cite{DLR}. When irreducible, the Galois group of the normalized equation is $\SL(\mathbb C)$.

The {\bf Weber equation}
\begin{equation}\label{eq:Weber}
u''=(x^2-2a)u
\end{equation}
has a single irregular pole at $x=\infty$ with irregularity index $\kappa=2$ and exponent $\theta_\infty=2a-1$.
The monodromy data can be described as follows:
$$M_\infty=\begin{pmatrix}e^{i\pi\theta_\infty}&0\\ 0&e^{-i\pi\theta_\infty}\end{pmatrix}
\begin{pmatrix}1&s_1\\ 0&1\end{pmatrix}\begin{pmatrix}1&0\\ t_1&1\end{pmatrix}
\begin{pmatrix}1&s_2\\ 0&1\end{pmatrix}\begin{pmatrix}1&0\\ t_2&1\end{pmatrix}=I.$$
In particular, one easily check that are equivalent:
$$\text{the Galois group reducible}\ \Leftrightarrow\ \theta_\infty\in 2\mathbb Z
\ \Leftrightarrow\ a\in\frac{1}{2} +\mathbb Z.$$
When irreducible, the Galois group of the normalized equation is $\SL(\mathbb C)$.

The {\bf degenerate confluent hypergeometric equation}
\begin{equation}\label{eq:DC}
u''+\frac{c}{x}u'-\frac{1}{x}u=0
\end{equation}
has one logarithmic pole at $x=0$ with exponent $\theta_0=c$ and a ramified irregular point at $x=\infty$
with irregularity index $\kappa=\frac{1}{2}$. Its SL-form is the ``degenerate Whittaker'' equation
\begin{equation}\label{eq:DW}
u''=\left(\frac{1}{x}+\frac{(c-1)^2-1}{4x^2}\right)u
\end{equation}
that will be used in our computations. The monodromy data can be described as follows:
$$M_0\cdot M_\infty=I\ \ \ \text{with}\ \ \ M_\infty=\begin{pmatrix}0&1\\ -1&0\end{pmatrix}
\begin{pmatrix}1&s\\ 0&1\end{pmatrix}=\begin{pmatrix}0&1\\ -1&-s\end{pmatrix}$$
$$\text{and}\ \ \ s+2\cos(\pi\theta_0)=0.$$
The Galois group of the normalized equation is $\SL(\mathbb C)$, except when $s=0$
where the Galois group is dihedral. One easily check that the latter case holds if, and only if $c\in\frac{1}{2}+\mathbb Z$.

Finally, the most degenerate one, the {\bf Airy equation}
\begin{equation}\label{eq:Airy}
u''=xu
\end{equation}
has a single irregular pole at $x=\infty$ with irregularity index $\kappa=\frac{3}{2}$.
The monodromy data can be described as follows:
$$M_\infty=\begin{pmatrix}0&1\\ -1&0\end{pmatrix}
\begin{pmatrix}1&-1\\ 0&1\end{pmatrix}\begin{pmatrix}1&0\\ 1&1\end{pmatrix}\begin{pmatrix}1&-1\\ 0&1\end{pmatrix}=I$$
and the Galois group is $\SL(\mathbb C)$.

\section{Isomonodromic deformations}\label{sec:IsomDef}

When the degree of the polar divisor $N:=\deg(D)$ satisfies  $N>3$, then local formal data fail to determine 
the differential equation and we have non trivial deformations, even with constant monodromy
and Stokes data. We call them isomonodromic deformations.

To define monodromy data in family, we need to introduce the {\bf irregular Teichm\"uller} space, 
which is the moduli space of $(C,D,\{x_i\},\{\alpha_j,\beta_j,\gamma_i\})$
where we take into account a basis $(\alpha_j,\beta_j)$ for the fundamental group (with some base point $t_0$), 
as well as a loop $\gamma_i$ from the base point
to the singular point $t_i$, ending along $x_i\in\mathbb R>0$, for each irregular singular point.
This irregular Teichm\"uller space is described in \cite{Krichever,Heu}. Then we can define monodromy
representation in family, as well as Stokes matrices. This makes sense so talk about deformation with constant 
monodromy, that we call isomonodromic deformations.

Precisely, fix a genus $g$ and 
a local formal data (\ref{eq:LocForData}); denote by $T$ the corresponding irregular Teichm\"uller dimension
(\ref{eq:dimirrTeich}). Then, we can consider the moduli space of triples $(X,E,\nabla)$ where 
\begin{itemize}
\item $X=(C,D,\{x_i\})$ is an irregular curve of genus $g$, i.e. $C$ is a curve of genus $g$,
$$D=\sum_{i=1}^n(1+\bar\kappa_i)[t_i]\ \ \ \text{and}\ \ \ x_i\text{ is a local coordinate at }t_i;$$
\item $E$ is a rank $2$ vector bundle with trivial determinant $\det(E)=\mathcal O_C$;
\item $\nabla:E\to E\otimes\Omega^1_C(D)$ is a meromorphic trace-free connection with polar divisor $D$;
\item in local coordinate $x_i$, the connection is defined by $\Log$-$\Logres$-$\Irrun$-$\Irram$ with local formal data (\ref{eq:LocForData}).
\end{itemize}
We can consider $C$ belonging to the irregular Teichm\"uller space if we want to define the monodromy, or
belonging to the irregular moduli space of curves (or a smooth finite cover) for explicit computations.
The latter one is a quasi-projective variety of dimension $T=3g-3+\deg(D)$, and the former one 
is its universal cover, with Mapping-Class-Group acting as covering transformations.
Once the irregular curve $X$ is fixed, the moduli space $\mathcal M(X)$ of $(E,\nabla)$ with above constraints
is a quasi-projective variety of dimension $2T$, provided the local formal data (\ref{eq:LocForData})
is generic enough; in special cases, we have to consider semi-stable connections for some choice of 
weights, or for instance irreducible connections, in order to get such a nice moduli space. The total space 
$\mathcal M$ of connections with fixed genus $g$ and local formal data (\ref{eq:LocForData}) has therefore 
dimension $3T$. A point $(X,E,\nabla)$ on this moduli space is locally determined by an irregular curve 
and an irregular monodromy (representation + Stokes matrices).
If we fix the curve $X$ and deform the monodromy, we then get the moduli space $\mathcal M(X)$.
If we now fix the monodromy and deform the curve, then we get the so-called (universal) {\bf isomonodromic deformation}.
There is a $T$-dimensional foliation on the moduli space $\mathcal M$ whose leaves
correspond to maximal isomonodromic/isoStokes deformations. We call it {\bf isomonodromic foliation}.
Isomonodromic leaves are locally parametrized 
by the irregular Teichm\"uller space. When considering $X$ living in the moduli space of curves, 
the $3T$-dimensional space is algebraic and the isomonodromic foliation is expected to be 
a polynomial foliation (i.e. defined by polynomial differential equations). This fact is well-known 
when $C=\mathbb P^1$ and $D$ is reduced: isomonodromic differential equations are known as {\bf Garnier systems}
of rank $T$ in that case; the particular case $T=1$ leads to {\bf Painlev\'e equations} (where $\dot{q}$ denotes $\frac{dq}{dt}$):
\begin{equation}\label{eq:PainleveEquations}
\begin{matrix}
P_I\hfill :& \ddot{q}=6q^2+t\hfill\hfill\\
P_{II}(\alpha)\hfill :& \ddot{q}=2q^3+tq+\alpha\hfill\hfill\\
P_{III}(\alpha,\beta,\gamma,\delta)\hfill :& \ddot{q}=\frac{(\dot{q})^2}{q}-\frac{\dot{q}}{t}+\frac{\alpha q^2+\beta}{t}+\gamma q^3+\frac{\delta}{q}\hfill\hfill\\
P_{IV}(\alpha,\beta)\hfill :& \ddot{q}=\frac{(\dot{q})^2}{2q}+\frac{3}{2}q^3+4tq^2+2(t^2-\alpha)q+\frac{\beta}{q}\hfill\hfill\\
P_{V}(\alpha,\beta,\gamma,\delta)\hfill :& \ddot{q}=\left(\frac{1}{2q}+\frac{1}{q-1}\right)(\dot{q})^2-\frac{\dot{q}}{t}+\frac{(q-1)^2}{t^2}\left(\alpha q+\frac{\beta}{q}\right)
+\frac{\gamma q}{t}+\frac{\delta q(q+1)}{q-1}\\
P_{VI}(\alpha,\beta,\gamma,\delta)\hfill :& \ddot{q}=\frac{1}{2}\left(\frac{1}{q}+\frac{1}{q-1}+\frac{1}{q-t}\right)(\dot{q})^2-\left(\frac{1}{t}+\frac{1}{t-1}+\frac{1}{q-t}\right)\dot{q}\hfill\hfill\\
&\hfill+\frac{q(q-1)(q-t)}{t^2(t-1)^2}\left(\alpha+ \frac{\beta t}{q^2}+ \frac{\gamma (t-1)}{(q-1)^2}+ \frac{\delta t(t-1)}{(q-t)^2}\right)
\end{matrix}
\end{equation}
The corresponding isomonodromy equations are given by Table \ref{table:Painleve}.

\begin{table}[h]
\caption{Painlev\'e equations as isomonodromy equations}
\begin{center}
\begin{tabular}{|c|c|}
\hline 
Local formal data  & Isomonodromy equation \\
\hline
$\begin{pmatrix}0&0&0&0\\ \theta_0&\theta_1&\theta_t&\theta_\infty\end{pmatrix}$ & $P_{VI}\left(\frac{(\theta_\infty-1)^2}{2},-\frac{(\theta_0)^2}{2},\frac{(\theta_1)^2}{2},\frac{1-(\theta_t)^2}{2}\right)$\\
\hline
$\begin{pmatrix}0&1&0\\ \theta_0&\theta_1&\theta_\infty\end{pmatrix}$ & $P_{V}\left(\frac{(\theta_\infty)^2}{2},-\frac{(\theta_0+1)^2}{2},\theta_1,-\frac{1}{2}\right)$\\
\hline
$\begin{pmatrix}0&\frac{1}{2}&0\\ \theta_0&0&\theta_\infty\end{pmatrix}$ & $P_{V}\left(\frac{(\theta_\infty)^2}{2},-\frac{(\theta_0+1)^2}{2},-2,0\right)\sim $\\
&{\small $P_{III}\left(-4(\theta_0-\theta_\infty-1),-4(\theta_0+\theta_\infty),4,-4\right)$}\\
\hline
$\begin{pmatrix}0&2\\ \theta_0&\theta_\infty\end{pmatrix}$ & $P_{IV}\left(\theta_\infty,-2(\theta_0+1)^2\right)$\\
\hline
$\begin{pmatrix}0&\frac{3}{2}\\ \theta_0&0\end{pmatrix}$ & $P_{II}\left(\theta_0-\frac{1}{2}\right)$\\
\hline
$\begin{pmatrix}1&1\\ \theta_0&\theta_\infty\end{pmatrix}$ & $P_{III}\left(4\theta_\infty,-4\theta_0,4,-4\right)$\\
\hline
$\begin{pmatrix}1&\frac{1}{2}\\ \theta_0&0\end{pmatrix}$ & $P_{III}\left(-8,-4\theta_0,0,-4\right)$\\
\hline
$\begin{pmatrix}\frac{1}{2}&\frac{1}{2}\\ 0&0\end{pmatrix}$ & $P_{III}\left(4,-4,0,0\right)$\\
\hline
$\begin{pmatrix}3\\ \theta_\infty\end{pmatrix}$ & $P_{II}\left(\frac{1-\theta_\infty}{2}\right)$\\
\hline
$\begin{pmatrix}\frac{5}{2}\\ 0\end{pmatrix}$ & $P_{I}$\\
\hline
\end{tabular}
\end{center}
\label{table:Painleve}
\end{table}

Let us recall the case of Painlev\'e II equation $P_{II}(\alpha)$. The general linear differential equation
having a single pole at infinity with local formal data
$$\begin{pmatrix}3\\ 1-2\alpha\end{pmatrix}$$
can be normalized in scalar $\SL$-form (i.e. Sturm-Liouville operator)
\begin{equation}\label{eq:LinearP2}
u''=\left(x^4+tx^2+2\alpha x+2H_{II}+\frac{3}{4(x-q)^2}-\frac{p}{x-q}\right)u
\end{equation}
(coefficients of $x^4$ and $x^3$ have been normalized to $1$ and $0$ by an affine transformation in $x$-variable).
Here, $p,q$ are accessory parameters (i.e. initial conditions for the Painlev\'e equation) and 
\begin{equation}\label{eq:HamiltonianP2}
H_{II}=\frac{1}{2}\left(p^2-q^4-tq^2-2\alpha q\right)
\end{equation}
is determined so that the pole $x=q$ is apparent. After getting rid of apparent singular point, we can transform equation (\ref{eq:LinearP2}) into the Riccati equation 
\begin{equation}\label{eq:RiccatiP2}
y'+(x-q)y^2+(2x^2-2q^2+2p)y+(2p-2q^2-t)x+(1-2\alpha+2pq-2q^3-qt)=0
\end{equation}
or equivalently into the $\SL$-system (setting $y=y_1/y_2$)
\begin{equation}\label{eq:SystemP2}
\begin{pmatrix}y_1\\ y_2\end{pmatrix}'=\begin{pmatrix}-x^2+q^2-p & 2\alpha-1-2pq+2q^3+qt\\ x-q & x^2-q^2+p\end{pmatrix}\begin{pmatrix}y_1\\ y_2\end{pmatrix}.
\end{equation}
One can check, by direct computation, that the singularity at infinity (in  variable $z=1/x$) can be 
normalized by holomorphic gauge transformation to 
\begin{equation}\label{eq:SystemP2infty}
dY+AY=0\ \ \ \text{with}\ \ \ 
A=\begin{pmatrix}\frac{1}{2}&0\\ 0 & -\frac{1}{2}\end{pmatrix}\left(2\frac{dz}{z^4}+t\frac{dz}{z^2}+\left(1-2\alpha\right)\frac{dz}{z}+\text{holomorphic}\right).
\end{equation}
We retrieve, in the principal part, the fact that the first two coefficients have been normalized by affine transformation in $x$, 
the third coefficient stands for the time variable, and the fourth one, for the local formal data $\theta_\infty$.
A deformation $t\mapsto(p(t),q(t))$ of equation (\ref{eq:LinearP2})
is isomonodromic
(in fact, iso-Stokes in that case) if, and only if, it satisfies the Hamiltonian equations
\begin{equation}\label{eq:HamiltonianSystemP2}
\frac{dp}{dt}=-\frac{\partial H_{II}}{\partial q}\ \ \ \text{and}\ \ \ \frac{dq}{dt}=\frac{\partial H_{II}}{\partial p}.
\end{equation}
Equivalently, the corresponding curve in $(t,p,q)$-variables is in the kernel of the $2$-form 
\begin{equation}\label{eq:PoissonP2}
\omega=dp\wedge dq+dt\wedge dH_{II}.
\end{equation}
Using the second equation (\ref{eq:HamiltonianSystemP2}), we can express $p$ in terms of $q$ and $\dot{q}$;
substituting in the first equation, we deduce that $q(t)$ is a solution of Painlev\'e II equation.

In (\ref{eq:DegSchemePainleve}) we see the list of formal data of Painlev\'e type, and how they conflue to each other.

\begin{equation}\label{eq:DegSchemePainleve}
{\small
 \xymatrix {
 {P_{VI}\begin{pmatrix}0&0&0&0\\ \theta_0&\theta_1&\theta_t&\theta_\infty\end{pmatrix}}\ar[dd]  &&&\\
 &&&\\
  {P_V\begin{pmatrix}0&0&1\\ \theta_0&\theta_1&\theta_\infty\end{pmatrix}} \ar[rr] \ar[dd] \ar[dr] && {P_{IV}\begin{pmatrix}0&2\\ \theta_0&\theta_\infty\end{pmatrix}} \ar[dr] \ar[dd] |!{[dl];[dr]}\hole \\
  & {P_{III}^{D_6}\begin{pmatrix}0&0&\frac{1}{2}\\ \theta_0&\theta_1&0\end{pmatrix}} \ar[rr] \ar[dd] && {P_{II}\begin{pmatrix}0&\frac{3}{2}\\ \theta_0&0\end{pmatrix}} \ar[dd] \\
  {P_{III}^{D_6}\begin{pmatrix}1&1\\ \theta_0&\theta_\infty\end{pmatrix}}\ar@{=}[ur] \ar[rr] |!{[ur];[dr]}\hole \ar[dr] && {P_{II}\begin{pmatrix}3\\ \theta_\infty\end{pmatrix}}\ar@{=}[ur] \ar[rd]& \\
  & {P_{III}^{D_7}\begin{pmatrix}1&\frac{1}{2}\\ \theta_0&0\end{pmatrix}}\ar[dr] \ar[rr] && {P_I\begin{pmatrix}\frac{5}{2}\\ 0\end{pmatrix}} \\ 
    &&{P_{III}^{D_8}\begin{pmatrix}\frac{1}{2}&\frac{1}{2}\\ 0&0\end{pmatrix}}
  }}
\end{equation}

In the case $T=2$, the degeneration diagram is given in picture (\ref{eq:DegSchemeGarnier2}); 
the unramified part (and also the case $(7/2)$) of the list is treated by Kimura in \cite{Kimura}, 
while the ramified part is studied by Kawamuko in \cite{Kawamuko}. We named 
$\mathrm{Kim}_1,\ldots,\mathrm{Kim}_8, \mathrm{Kaw}_1,\ldots, \mathrm{Kaw}_8$
these equations following the order of appearance in these two papers.

\begin{equation}\label{eq:DegSchemeGarnier2}
\tiny \xymatrix {
 {\mathrm{Kim}_1\left(\begin{matrix}0\\ \theta_0\end{matrix}\begin{matrix}0\\ \theta_1\end{matrix}\begin{matrix}0\\ \theta_2\end{matrix}\begin{matrix}0\\ \theta_3\end{matrix}\begin{matrix}0\\ \theta_\infty\end{matrix}\right)}\ar[dd]&&    &&    \\
 \\
 {\mathrm{Kim}_2\left(\begin{matrix}0\\ \theta_0\end{matrix}\begin{matrix}0\\ \theta_1\end{matrix}\begin{matrix}0\\ \theta_2\end{matrix}\begin{matrix}1\\ \theta_\infty\end{matrix}\right)} 
 \ar[rr] \ar[dd] \ar[dr] && 
 {\mathrm{Kim}_3\left(\begin{matrix}0\\ \theta_0\end{matrix}\begin{matrix}0\\ \theta_1\end{matrix}\begin{matrix}2\\ \theta_\infty\end{matrix}\right)} \ar[rr]\ar[dr] \ar[dd] |!{[dl];[dr]}\hole && 
 {\mathrm{Kim}_5\left(\begin{matrix}0\\ \theta_0\end{matrix}\begin{matrix}3\\ \theta_\infty\end{matrix}\right)} \ar[dr] \ar[dd] |!{[dl];[dr]}\hole\\
 & {\mathrm{Kaw}_8\left(\begin{matrix}0\\ \theta_0\end{matrix}\begin{matrix}0\\ \theta_1\end{matrix}\begin{matrix}0\\ \theta_2\end{matrix}\begin{matrix}\frac{1}{2}\\ 0\end{matrix}\right)} \ar[rr] \ar[dd] &&
 {\mathrm{Kaw}_5\left(\begin{matrix}0\\ \theta_0\end{matrix}\begin{matrix}0\\ \theta_1\end{matrix}\begin{matrix}\frac{3}{2}\\ 0\end{matrix}\right)} \ar[dd]\ar[rr] &&
 {\mathrm{Kaw}_1\left(\begin{matrix}0\\ \theta\end{matrix}\begin{matrix}\frac{5}{2}\\ 0\end{matrix}\right)} \ar[dd] \\
  {\mathrm{Kim}_4\left(\begin{matrix}0\\ \theta_0\end{matrix}\begin{matrix}1\\ \theta_1\end{matrix}\begin{matrix}1\\ \theta_\infty\end{matrix}\right)} \ar[rr] |!{[ur];[dr]}\hole \ar[dr] && 
  {\mathrm{Kim}_6\left(\begin{matrix}1\\ \theta_0\end{matrix}\begin{matrix}2\\ \theta_\infty\end{matrix}\right)} \ar[rd]\ar[rr] |!{[ur];[dr]}\hole \ar[lddd]|!{[dl];[dr]}\hole |!{[dl];[dd]}\hole&& 
  {\mathrm{Kim}_7\begin{pmatrix}4\\ \theta\end{pmatrix}}\ar[rd] \\
    & {\mathrm{Kaw}_6\left(\begin{matrix}0\\ \theta_0\end{matrix}\begin{matrix}1\\ \theta_1\end{matrix}\begin{matrix}\frac{1}{2}\\ 0\end{matrix}\right)}\ar[dr]\ar[dd] \ar[rr] && 
   {\mathrm{Kaw}_2\left(\begin{matrix}1\\ \theta\end{matrix}\begin{matrix}\frac{3}{2}\\ 0\end{matrix}\right)}
   \ar[rr]\ar[rd]&& 
    {\mathrm{Kim}_8\begin{pmatrix}\frac{7}{2}\\ 0\end{pmatrix}} \\
     &&
     {\mathrm{Kaw}_7\left(\begin{matrix}0\\ \theta\end{matrix}\begin{matrix}\frac{1}{2}\\ 0\end{matrix}\begin{matrix}\frac{1}{2}\\ 0\end{matrix}\right)}\ar[rr]&&
     {\mathrm{Kaw}_4\left(\begin{matrix}\frac{1}{2}\\ 0\end{matrix}\begin{matrix}\frac{3}{2}\\ 0\end{matrix}\right)}\\
    &
    {\mathrm{Kaw}_3\left(\begin{matrix}\frac{1}{2}\\ 0\end{matrix}\begin{matrix}2\\ 0\end{matrix}\right)}\ar[urrr]
  }
\end{equation}
It is important to notice that isomonodromy condition is equivalent to the fact that the deformation of equation
is induced by a flat meromorphic connection on the universal irregular curve, i.e. the total space of the family 
of irregular curve. 

\section{Algebraic solutions of irregular Garnier systems: examples and structure}\label{sec:algsol}

There are several methods to construct algebraic solutions of classical Garnier systems (logarithmic case),
see \cite{AK,Boalch,CM,Diarra1,Doran,DM0,LisovyyTykhyy} and references therein.
In the irregular case, let us describe two methods to produce algebraic isomonodromic deformations.

\subsection{Classical solutions: the Galois group is $C_\infty$ or $D_\infty$}\label{sec:classicalsol}
The rough idea is as follows. Since the differential Galois group of a linear differential equation can be determined
from its coefficients by algebraic operations, it follows that iso-Galois deformations are of algebraic nature:
we have an algebraic stratification of each moduli space 
$$\mathcal M_g\begin{pmatrix}\kappa_1&\cdots&\kappa_n\\
\theta_1&\cdots&\theta_n\end{pmatrix}$$  
as  defined in section \ref{sec:IsomDef} where strata are defined in term of the Galois group. In fact, 
this is not exactly true since there might be infinitely many strata corresponding to finite groups in the dihedral case.
However, the locus of each finite group is algebraic and coincides with a finite number of isomonodromic leaves.
It follows that the leaf associated to a finite linear group is algebraic. This has been extensively used in \cite{Hitchin,BoalchIco,BoalchTetraOcta,Boalch6,BoalchHigherGenus,Boalch}
and references therein in the logarithmic case; this does not occur in the irregular case since the Galois group is never finite
in that case. 
However, in a similar way, the locus of $C_\infty$ or $D_\infty$ is algebraic and is a finite union of isomonodromy leaves
in many cases when $C=\mathbb P^1$.

Let us first explain the diagonal case $C_\infty$. 
The poles can only be of type $\Log$ and $\Irrun$, and in this latter case, 
the Galois group $C_\infty$ coincides with all local exponential torii at irregular singular points
and all Stokes matrices are trivial. The two eigendirections of the Galois group correspond to 
two $\nabla$-invariant line bundles $L,L^{-1}\subset E$ of the vector bundle for the normalized equation,
and we have $E=L\oplus L^{-1}$.
The connection $\nabla$ restricts as meromorphic connections on $(L,\nabla\vert_L)$ and $(L,\nabla\vert_L)^{\otimes(-1)}$ 
and, at each pole $t_i$,
the corresponding residues are $\pm\frac{\theta_i}{2}$: they are opposite for $L$ and $L^{-1}$.
Fuchs relation yields:
$$\sum_{i=1}{n}\epsilon_k\theta_k\in\mathbb Z,\ \ \ \epsilon_i=\pm1.$$
There are finitely many such relations for each formal data, and given one relation,
the connection $(L,\nabla\vert_L)$ (and therefore $(E,\nabla)$) can be uniquely determined by Mittag-Leffler's Theorem
from the data of the irregular curve ($C=\mathbb P^1$ + principal parts). 

Something similar occur when the Galois group is $D_\infty$. The poles must be of type $\Log$, $\Irrun$ or $\Irram$
and the normal subgroup $C_\infty\subset D_\infty$ must coincide with all local exponential torii at irregular singular points.
In that case, Stokes matrices are trivial and, if $C=\mathbb P^1$, the global monodromy group is generated by matrices $M_i\in D_\infty$
(local monodromy at $t_i$) satisfying
$$M_1\cdots M_n=I.$$
Precisely, for a normalized equation, we have:
$$
\begin{matrix}
M_i\ \text{diagonal}\hfill\hfill  & \Rightarrow & \kappa_i\in\mathbb Z_{\ge0}\\
M_i\ \text{anti-diagonal} & \Rightarrow &
\begin{pmatrix}\kappa_i\\ \theta_i\end{pmatrix}=\begin{pmatrix}0\\ \frac{1}{2}\end{pmatrix}\ \text{or}\ \begin{pmatrix}\frac{1}{2}+n\\0\end{pmatrix},\ n\ge0.
\end{matrix}
$$
The number of anti-diagonal matrices among $M_1,\ldots,M_n$ is even.

\begin{remark}\label{rem:2antidiagDinfty}
One can check that, if $C=\mathbb P^1$ and there are only two anti-diagonal matrices among $M_1,\ldots,M_n$,
say $M_{n-1}$ and $M_n$, then again the (algebraic) locus of $D_\infty$ consists in a finite number of isomonodromic leaves.
Indeed, the monodromy representation is determined by the $M_i$'s; for $i=1,\ldots,n-2$, we have
$$M_i=\begin{pmatrix}\lambda_i&0\\ 0&\lambda_i^{-1}\end{pmatrix}\ \ \ \text{with}\ \lambda_i=e^{\pm \sqrt{-1}\pi\theta_i};$$ 
we can rescale $M_n=\begin{pmatrix}0&1\\-1&0\end{pmatrix}$ by diagonal conjugacy, and $M_{n-1}$ is determined by the relation.
The connection is also determined by the irregular curve in this case (principal parts).
\end{remark}

\subsection{Pull-back algebraic solutions}\label{sec:pullback}
Another way to construct algebraic isomonodromic deformations (see \cite{Doran,AK,Kitaev,AK2,Kitaev2,Kitaev3,Kitaev4,VK,VK2,Diarra1,OO}) is to fix a differential equation
$(C_0,E_0,\nabla_0)$ and consider an algebraic family $\phi_t:C_t\to C_0$ of ramified covers, 
where $t\in P$ a projective variety. The pull-back
$(C_t,\phi_t^*E_0,\phi_t^*\nabla_0)$ provides an algebraic isomonodromic deformation. Indeed, it is induced by 
the flat connection $(\Phi^*E_0,\Phi^*\nabla_0)$ over the total space $t:\mathcal C=\sqcup_t C_t\to P$ defined by pull-back
via the total ramified cover $\Phi:=(\phi_t,t):\mathcal C\to C_0\times P$. 

\subsection{Garnier algebraic solutions and apparent singular points}
So far, we have neglected to consider apparent singular points in isomonodromic deformations
since we were dealing with normalized equations. However, Garnier systems are derived from 
isomonodromic deformations of scalar differential equations (\ref{eq:scalar}) with $N$
poles (counted with multiplicity) and $N-3$ apparent singular points (see \cite{Kimura,Kawamuko}). 
This can be reinterpreted as the data of a connection $\nabla$ on the bundle 
$E=\mathcal O_{\mathbb P^1}\oplus\mathcal O_{\mathbb P^1}(-1)$ with $N$ poles, 
and we recover the scalar equation by taking $\mathcal O_{\mathbb P^1}\subset E$ as a cyclic vector.
Equivalently, and closer to our point of view, one can consider a $\SL$-connection $(E,\nabla)$
with $E=\mathcal O_{\mathbb P^1}\oplus\mathcal O_{\mathbb P^1}$, 
and the cyclic vector (with the right number of apparent singular points) is choosen to be
the constant line bundle $L\subset E$ fitting with one of the eigendirection over one pole (typically at $\infty$
when normalizing the position of poles on $\mathbb P^1$).
Here, the poles are of type $\Log$, $\Logres$, $\Irrun$ or $\Irram$, but in the case $\Log$, we also
allow $\theta\in\mathbb Z$ as a degenerate case of $\Logres$ when the singular point becomes apparent.
These considerations lead us to the following facts.

\begin{prop}\label{prop:cyclicvector}
Let $(E=\mathcal O_{\mathbb P^1}\oplus\mathcal O_{\mathbb P^1},\nabla)$ be an irregular $\SL$-connection, 
and $L\subset E$ be a cyclic vector like above.
The (local) isomonodromic deformation of $(E,\nabla)$ provides a (local) solution of the Garnier system
if the line bundle $L$ is not $\nabla$-invariant. If $\mathrm{Gal}(E,\nabla)=C_\infty$ and there is no apparent singular point,
then $L$ is $\nabla$-invariant and the deformation fails to provide a solution of the corresponding Garnier system.
\end{prop}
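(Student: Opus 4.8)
The statement has two parts. The first is essentially a reformulation of the definition: choosing a cyclic vector $L\subset E$ that is not $\nabla$-invariant produces, after the reduction to scalar form of Section~\ref{sec:DiffEq}, a second-order equation (\ref{eq:scalar}) whose apparent singular points give the canonical coordinates $(q_j,p_j)$, and the isomonodromic deformation of $(E,\nabla)$ translates into the Garnier Hamiltonian flow in these coordinates. The plan is therefore to recall that if $L$ is not $\nabla$-invariant then it is genuinely a cyclic vector (the map $E\vert_x\to J^1$ to the $1$-jet of horizontal sections is an isomorphism at a generic point), so the reduction to (\ref{eq:scalar}) is available, the number of apparent singular points is the expected $N-3$ by a degree/Fuchs-relation count on $E=\OO{\P^1}\oplus\OO{\P^1}(-1)$, and then the statement that the isomonodromic foliation of Section~\ref{sec:IsomDef} pushes forward to the Garnier system is exactly the classical fact recalled there (the deformation is induced by a flat connection on the universal irregular curve, and in the chart given by the cyclic vector this is the Hamiltonian system). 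Up to bookkeeping this is immediate from what precedes.

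The second part is the genuine assertion. Suppose $\mathrm{Gal}(E,\nabla)=C_\infty$ and there is no apparent singular point. By the discussion in Section~\ref{sec:classicalsol} on the diagonal case, $\mathrm{Gal}=C_\infty$ forces $E=L_0\oplus L_0^{-1}$ with both summands $\nabla$-invariant, all poles of type $\Log$ or $\Irrun$, all Stokes matrices trivial, and $\nabla$ restricts to rank-one connections on $L_0^{\pm1}$ whose residues at each $t_i$ are $\pm\theta_i/2$. The plan is to show that, under the additional hypothesis of \emph{no apparent singular point}, the cyclic line $L\subset E=\OO{\P^1}\oplus\OO{\P^1}$ of the Proposition must coincide with one of the two $\nabla$-invariant sub-bundles. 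First I would exploit the degree constraint: the cyclic vector is taken to be the constant sub-bundle $\OO{\P^1}\subset\OO{\P^1}\oplus\OO{\P^1}$ fitting an eigendirection at one pole (say $\infty$), so at $t_n=\infty$ the line $L$ already agrees with one of the two invariant lines, say $L_0$. Now consider the section of $\P(E)\to\P^1$ defined by $L$ and compare it to the section defined by $L_0$: their difference is measured by a map $L\to E/L_0\cong L_0^{-1}$, i.e. a global section of $L_0^{-2}$; its zeros are exactly the points where $L$ meets $L_0$, and—crucially—away from the poles of $\nabla$ an apparent (or ordinary) point of the scalar equation attached to $L$ is a point where the tangency order of $L$ with the horizontal foliation is $\ge 2$, i.e. where $L$ is \emph{not} transverse to the Riccati foliation. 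The key computation is that, because $L_0$ is $\nabla$-invariant (a leaf of the Riccati foliation through that whole fiber direction) while $L$ meets it, the contact of $L$ with the foliation at such a meeting point forces an apparent singular point there unless $L=L_0$ identically.

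Concretely I would argue as follows. In the splitting $E=L_0\oplus L_0^{-1}$ write the connection matrix as $\begin{pmatrix}\omega_0&0\\0&-\omega_0\end{pmatrix}$ with $\omega_0$ a meromorphic $1$-form with simple/irregular poles at the $t_i$ and residues $\theta_i/2$; the associated Riccati equation in the affine coordinate $y$ on $\P(E)$ (with $y=0\leftrightarrow L_0$, $y=\infty\leftrightarrow L_0^{-1}$) is $dy=2\omega_0\, y$, so its non-invariant leaves are $y=c\,\exp(2\int\omega_0)$. The cyclic line $L$ is the graph of some rational function $y=\psi(x)$, not identically $0$ or $\infty$; the scalar equation it produces has an apparent singular point (or an ordinary point that should be a pole—same count) exactly at each zero and pole of $\psi$ away from $\{t_i\}$, because there $L$ is tangent to $y\equiv 0$ or $y\equiv\infty$, which are $\nabla$-invariant leaves. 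Since $\psi$ is a non-constant rational function it must have at least one zero or pole, and a short degree count on $\P^1$ shows the total number of such points (apparent singularities) is positive unless $\psi$ is constant — and a constant nonzero $\psi$ still meets neither invariant line, which contradicts the normalization that $L$ fits an eigendirection at $\infty$, forcing $\psi\equiv0$, i.e. $L=L_0$. Hence $L$ is $\nabla$-invariant, the reduction to scalar form degenerates (the "scalar equation" has order $<2$ along $L$, equivalently $\gamma\equiv0$ in (\ref{eq:companion})), and by the first part the isomonodromic deformation does not move the Garnier time variables, so it fails to give a solution of the Garnier system.

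**Main obstacle.** The delicate point is the bookkeeping that ties "no apparent singular point" to "$L$ equals one of the invariant lines" — i.e. showing that every zero/pole of the graphing function $\psi$ away from $\{t_i\}$ genuinely contributes an apparent singularity of the associated scalar equation, with the right multiplicity, and that the degree balance on $\P^1$ leaves no room. One must be careful that apparent singularities could in principle be absorbed at the poles $t_i$ themselves (where $\psi$ also has zeros/poles coming from $\exp(2\int\omega_0)$), so the argument must separate the "new" zeros of $\psi$ (forced apparent points) from those dictated by the local models $\Log$/$\Irrun$. I expect this local-to-global contact/degree count to be where the real work lies; everything else is a direct application of the structure results already recalled in Sections~\ref{sec:DiffEq} and~\ref{sec:classicalsol}.
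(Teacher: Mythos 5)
Your treatment of the first half of the statement follows the paper's proof: $L$ not being $\nabla$-invariant is exactly what it means for it to be a cyclic vector, the condition persists under isomonodromic deformation, and the induced deformation of the associated scalar equation is the Garnier Hamiltonian flow. No issue there.

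For the second half there is a genuine error in the mechanism you propose. You claim that each zero or pole of the graphing function $\psi$ away from the poles of $\nabla_0$ produces an apparent singular point ``because there $L$ is tangent to $y\equiv0$ or $y\equiv\infty$''. This is false: apparent singular points of the scalar equation attached to $L$ occur where the cyclic-vector basis degenerates, i.e.\ at the \emph{tangencies} of the section $y=\psi(x)$ with the Riccati foliation $dy=2\omega_0\,y$. Taking $s=(1,\psi)$ in the splitting $E=L_0\oplus L_0^{-1}$, one computes $\det\bigl(s,\nabla_{\partial_x}s\bigr)=\psi'-2\omega_0(\partial_x)\psi$, which at a simple zero of $\psi$ located at a regular point of $\nabla_0$ equals $\psi'\neq0$: the graph crosses the invariant leaf $y=0$ transversally and no apparent point is created; the same happens at a simple pole of $\psi$. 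So the local-to-global degree count you defer to your ``main obstacle'' paragraph cannot close the argument along the lines you indicate. The gap is easily repaired, and the repair is the paper's much shorter proof: since $E=\mathcal O_{\mathbb P^1}\oplus\mathcal O_{\mathbb P^1}$, every sub-line-bundle has degree $\le0$, so the two $\nabla$-invariant sub-bundles $L_0$ and $L_0^{-1}$, whose degrees sum to $\deg E=0$, both have degree $0$ and are therefore constant line bundles whose fibers over each pole are the two eigendirections. The cyclic line $L$ is by definition a constant line bundle fitting an eigendirection over one pole, hence it coincides with $L_0$ or $L_0^{-1}$ at that point and therefore everywhere, so it is $\nabla$-invariant. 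In your coordinates: $\psi$ is constant from the outset (you yourself call $L$ ``the constant sub-bundle''), and your correct remark that a finite nonzero constant is incompatible with the normalization at $\infty$ already finishes the proof; the tangency/degree discussion is both unnecessary and, as stated, incorrect.
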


\begin{proof} The condition that $L$ is not $\nabla$-invariant is equivalent to the fact that it defines
a cyclic vector, which allow to define the scalar system with $N$ poles and $N-3$ apparent singular points.
This condition is preserved under isomonodromic deformations and the deformation of the scalar system
leads to a Garnier solution. In the case $(E,\nabla)$ is normalized (i.e. without apparent singular points)
and the Galois group is $C_\infty$, then $E=L_0\oplus L_0^{-1}$ with $L_0,L_0^{-1}$ two $\nabla$-invariant line bundles.
Automatically, we have $L_0\simeq\mathcal O_{\mathbb P^1}$ (a constant line bundle), and $L_0$ (resp. $L_0^{-1}$)
coincide with an eigendirection over each pole. It follows that $L_0$ or $L_0^{-1}$ coincides with $L$
everywhere, and is $L$ therefore $\nabla$-invariant.
\end{proof}

In the presence of {\bf apparent singular points}, the condition for a deformation to be isomonodromic
and giving rise to a Garnier solution is more subtle. 
Let $(E_0,\nabla_0)$ be a meromorphic $\SL$-connection, which is normalized except
at an apparent singular point $p_0\in C$: it is of type $\Log$ 
with $\theta=n\in\mathbb Z_{>0}$. Then after a birational bundle transformation $\phi:E_0\dashrightarrow E_0'$,
we can erase the singular point; moreover, we can assume $\phi$ supported by $p_0$, i.e. inducing a biholomorphic 
bundle tranformation outside of $p_0$. Generic local sections of $E_0$ at $p_0$ are transformed into sections of $E_0'$,
all tangent at the order $n-1$ to a given $\nabla$-invariant analytic subbundle $L_0\subset E_0'$. Then we have

\begin{prop}\label{prop:singappIsom}Under notations above, given a a deformation of $(E_0,\nabla_0)$ 
induced by a flat connection $(E,\nabla)$, are equivalent
\begin{itemize}
\item $(E,\nabla)$ is logarithmic near $p$,
\item the deformation $p$ of the singular point $p_0$ remains apparent 
and the corresponding deformation of line bundle $L_0\subset E_0'$ is induced by a local analytic $\nabla$-invariant 
line bundle $L\subset E'$.
\end{itemize}
In this case we say that the deformation is isomonodromic.
\end{prop}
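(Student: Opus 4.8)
The plan is to argue locally on $C$ near $p_0$, on the total space of the deformation, in the same spirit as the fact recalled earlier that isomonodromy amounts to being induced by a flat connection on the universal irregular curve. First I would choose coordinates $(x,t)=(x,t_1,\dots,t_N)$ for which $p_0=(0,0)$ and the deforming singular point $p$ is carried by the divisor $Z=\{x=0\}$, trivialize $E$ near $p_0$, and write $\nabla=d+\Omega$ with $\Omega=A(x,t)\,dx+\sum_i B_i(x,t)\,dt_i$. Restricting to the central fibre, $A(x,0)\,dx$ is the connection form of $\nabla_0$ near $p_0$, which is the apparent $\Log$ model with $\theta=n$; in particular $A$ automatically has at worst a simple pole along $Z$, say $A=\tfrac1x R(t)+(\text{holomorphic})$ with $R(0)$ the residue of the model. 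Consequently ``$(E,\nabla)$ is logarithmic near $p$'' is equivalent to the statement that the remaining coefficients $B_i$ are holomorphic along $Z$, i.e.\ that $\Omega$ is a section of $\Omega^1_{\mathcal C}(\log Z)\otimes\mathrm{End}(E)$; this is the quantity to be controlled.

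For the implication ``logarithmic $\Rightarrow$ apparent and $L_0$ deforms'', I would assume this logarithmic form. Then $R(t)$ is a holomorphic family, and since $\{0<|x|<\varepsilon,\ |t|<\delta\}$ deformation retracts onto the transverse circle $\{|x|=\varepsilon/2,\ t=0\}$ on the central fibre, the local monodromy of $\nabla$ around $Z$ is a single matrix, independent of $t$, equal on the central fibre to the local monodromy of the apparent point $p_0$, which is scalar; hence $p(t)$ stays apparent. Moreover $R(0)$ has two eigenvalues differing by $n>0$, so $R(t)$ keeps distinct eigenvalues; the eigen-subbundle $\ell(t)\subset E|_Z$ distinguished by the erasing transformation $\phi$ is holomorphic over $Z$, and flatness (which forces $\nabla|_Z$ to commute with $R$) makes it $\nabla|_Z$-invariant. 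Triviality of the local monodromy then allows one to extend the horizontal transport of $\ell$ across $Z$ to a local analytic $\nabla$-invariant line bundle $L\subset E$ near $Z$; performing the family version $\phi_t$ of $\phi$ (the composition of elementary transformations prescribed by $L$ and $n$) transports $L$ to the deformation of $L_0\subset E_0'$.

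For the converse I would be given that $p(t)$ stays apparent and that the deformation of $L_0$ is induced by a $\nabla$-invariant line bundle $L\subset E'$; the plan is to reverse the erasing transformation in family. Applying to $(E,\nabla)$ the composition $\phi_t$ of elementary transformations along the direction $L$ produces a flat connection $(\tilde E,\tilde\nabla)$ whose restriction to the central fibre is $\phi_0^*\nabla_0$, holomorphic near $p_0$ --- here persistence of apparentness is used, ensuring $\phi_t$ is an honest erasing transformation on each nearby fibre, since it is precisely triviality of the local monodromy together with the finitely many higher-order apparentness conditions that removes a $\Log$, $\theta=n$ pole. A semicontinuity argument as in the first paragraph then gives that $\tilde\nabla$ is holomorphic on a whole neighbourhood of $p_0$. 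Undoing $\phi_t$, i.e.\ applying the $n$ inverse elementary transformations, each carried out along the $\nabla$-invariant direction $L$, keeps the pole along $Z$ simple in every direction: along an invariant direction the entry that would otherwise acquire a higher-order pole vanishes identically, and only the residue is incremented, so after $n$ steps one lands exactly on the $\Log$, $\theta=n$ model and $\nabla$ is logarithmic near $p$.

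The main difficulty I anticipate is, in the first implication, extending the residue eigenline $\ell$ across the logarithmic divisor $Z$ as a bona fide $\nabla$-invariant line bundle: this has to use triviality of the local monodromy together with a compatibility with the canonical (Deligne-type) lattice along $Z$, which is delicate precisely because the two residue eigenvalues here differ by a positive integer. The other point needing genuine care is the bookkeeping in the converse --- that the $n$ inverse elementary transformations, carried out along $L$ in a holomorphic family, yield exactly a simple pole along $Z$ in all the directions $dx,dt_1,\dots,dt_N$ and introduce no spurious vertical polar component; this is exactly where one needs $L$ to be genuinely $\nabla$-invariant throughout the family, rather than only carrying the right $(n-1)$-jet along $Z$ as the tangency of generic sections provides a priori.
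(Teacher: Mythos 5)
The paper states Proposition \ref{prop:singappIsom} without any proof (it passes directly to the discussion of parabolic data), so there is no argument of the authors' to compare yours against; I can only assess your proposal on its own terms, and it is essentially correct. The two mechanisms you isolate are exactly the right ones: in the forward direction, flatness makes the residue of a logarithmic connection horizontal along $Z$ (so its eigenvalues stay $\pm n/2$ and its eigenlines are $\nabla\vert_Z$-flat), constancy of the local monodromy around $Z$ gives persistence of apparentness, and apparentness removes the resonance obstruction to extending the relevant eigenline to an invariant $L$ across $Z$; in the converse, the integrability relation $\partial_x B_i=\partial_{t_i}A+[A,B_i]$ upgrades ``holomorphic on each fibre'' to ``holomorphic on the total space'' for the erased connection, and the explicit computation of an elementary transformation $M=\mathrm{diag}(1,x)$ shows that the off-diagonal entry $c/x$ contributes non-logarithmic terms $c_{t_i}(0,t)\,dt_i/x$ unless $c$ vanishes along $Z$, which is precisely the $\nabla$-invariance of $L$ --- so the hypothesis is used exactly where you say it is, and the iteration lands on the resonant $\Log$ model with $\theta=n$. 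The only point I would ask you to spell out is the step ``scalar monodromy $+$ logarithmic pole with residue eigenvalues $\pm n/2$ $\Rightarrow$ apparent'': this needs the Levelt normal form (the nilpotent part of the normal form is killed by semisimplicity of the monodromy), and it is also where you should justify that the exponent cannot jump along the family (it cannot, since the characteristic polynomial of the horizontal residue is locally constant on the connected divisor $Z$).
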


Consequently, we will encode the data of such an equation $(E_0,\nabla_0)$ by the data of 
the normalized equation $(E_0',\nabla_0')$ together with the data of $p_0$ and the local 
$\nabla_0'$-invariant analytic line bundle$L_0$, or better by its fiber $l_0:=L_0\vert_{p_0}\subset E_0'\vert_{p_0}$
(initial condition) that is usually called {\bf parabolic data}. This method to deal with 
apparent singular points is used in \cite{IIS}.

\begin{remark}
The case $n=0$ also occur in solutions of Garnier systems as degenerate case
of singular points of type $\Logres$.
Garnier solutions can also be interpreted as before as deformation of a normalized equation $(E_0,\nabla_0)$
together with the parabolic data $(p_0\in C,l_0\in E_0\vert_{p_0})$ and there are as many Garnier solutions
as choices of initial condition $l_0$. 
\end{remark}

If we have several apparent singular points, the condition above must be imposed for each of them
to get an isomonodromic solution, and therefore giving rise to Garnier solutions.
By the way, we can have Garnier solutions corresponding to equation with Galois group $C_\infty$
in the presence of apparent singular points as we will see for Painlev\'e IV equation 
(second line of Table \ref{table:AlgSolPainleve}). 

\begin{prop}\label{prop:singappGarniersol}
The isomonodromic deformation of an irregular normalized connection $(E_0,\nabla_0)$
with an apparent singular point $(p_0,l_0)$ (notations above) is algebraic if and only if 
\begin{itemize}
\item the deformation of $(E_0,\nabla_0)$ is algebraic, with Galois group $C_\infty$ or $D_\infty$,
\item the local $\nabla_0$-invariant analytic line bundle $L_0$ defined by $l_0$ has algebraic closure,
i.e. corresponds to one of the two $\nabla_0$ invariant line bundles in diagonal case $C_\infty$,
or of the $2$-multivalued line bundle in dihedral case $D_\infty$.
\end{itemize}
\end{prop}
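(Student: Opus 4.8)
The plan is to split the data ``isomonodromic deformation of $(E_0,\nabla_0)$ together with the parabolic point $(p_0,l_0)$'' into its two natural constituents --- the deformation of the normalized connection alone, and the deformation of the $\nabla_0$-invariant germ $L_0$ carrying the direction $l_0$ --- and to show that the whole deformation is algebraic exactly when each constituent is, the algebraicity of the second being governed by the Galois group.

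For the ``if'' direction, assume the deformation of $(E_0,\nabla_0)$ is algebraic with Galois group $C_\infty$ or $D_\infty$, and that $L_0$ has algebraic closure. In the $C_\infty$ case one has $E_0=L\oplus L^{-1}$ with $L$ the algebraically determined rank one sub-connection of section \ref{sec:classicalsol}, and the hypothesis on $L_0$ forces $L_0=L$ or $L^{-1}$; in the $D_\infty$ case $L_0$ is the $2$-multivalued eigen sub-bundle of the (algebraically varying) connection. In both cases $L_0$ is cut out algebraically --- from the eigenspace data of the leading/residue matrices, respectively as the invariant sub-connection --- hence varies algebraically along the deformation; the point $p_0$ moves freely in $\mathbb{P}^1$, hence algebraically. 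By Proposition \ref{prop:singappIsom} the resulting deformation of the equation with apparent singular point is isomonodromic, and since every ingredient is algebraic, so is the corresponding Garnier leaf.

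For the ``only if'' direction, suppose the isomonodromic deformation of $(E_0,\nabla_0)$ with $(p_0,l_0)$ is algebraic. Forgetting the parabolic point sends this leaf onto a leaf of the isomonodromic foliation attached to the normalized formal data; being the image of an algebraic variety under an algebraic map, that leaf is algebraic, so the deformation of $(E_0,\nabla_0)$ alone is already algebraic. By the Structure Theorem (Corollary \ref{cor:structure}), such a deformation is either classical, with Galois group $C_\infty$ or $D_\infty$, or of pull-back type, with Galois group $\SL(\mathbb C)$ acting irreducibly on $\mathbb{C}^2$. In the latter case the Zariski closure of the monodromy and Stokes matrices has no finite orbit on $\mathbb{P}^1$, so there is no $\nabla_0$-invariant line bundle at all, not even a $2$-multivalued one; hence the germ $L_0$ has no algebraic closure and, by Proposition \ref{prop:singappIsom}, the family $t\mapsto l_0$ traces out a flat multisection of $\mathbb{P}(E')$ over the total space with Zariski-dense monodromy, hence transcendental --- contradicting algebraicity of the leaf, on which $(p_0,l_0)$ are coordinates. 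Therefore the Galois group is $C_\infty$ or $D_\infty$, and algebraicity of the leaf forces the (a priori multivalued) flat family $L_0$ to be of finite degree over the total space. A finite degree flat multisection of the associated $\mathbb{P}^1$-bundle corresponds to a finite orbit of the Galois group on $\mathbb{P}^1$: for $C_\infty$ this is one of the two fixed lines, and for $D_\infty$ the unique $2$-element orbit, i.e. the $2$-multivalued eigen sub-bundle --- which is exactly the assertion that $L_0$ has algebraic closure.

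The main obstacle is the decoupling argument in the ``only if'' direction: one must genuinely justify that algebraicity of the whole leaf forces both the connection deformation \emph{and} the flat family $L_0$ to be algebraic, which relies on the transcendence of a generic flat multisection when the monodromy--Stokes group has no finite orbit on $\mathbb{P}^1$, and then on matching ``finite orbit of the Galois group on $\mathbb{P}^1$'' against the short list of Galois groups available for a normalized irregular connection, thereby excluding the irreducible (pull-back) case at this stage. The ``if'' direction is comparatively routine once one knows from section \ref{sec:classicalsol} that the invariant sub-bundles of a classical connection move algebraically.
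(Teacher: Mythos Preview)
Your decomposition into ``deformation of the normalized connection'' plus ``deformation of the parabolic datum $(p_0,l_0)$'' is exactly the paper's approach: the paper phrases it as a fiber bundle, with base the deformation space of the strict irregular curve and fiber the curve $C$ (the position of $p_0$), and observes that algebraicity of the whole leaf is equivalent to algebraicity in both directions. Your ``if'' direction and your final finite-orbit argument (a finite-degree flat multisection of $\mathbb P(E')$ corresponds to a finite Galois orbit on $\mathbb P^1$) are likewise the same as the paper's.

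The real difference is in how you exclude the non-classical case in the ``only if'' direction. You invoke Corollary \ref{cor:structure} (the Structure Theorem) to first constrain the Galois group of the normalized deformation to $C_\infty$, $D_\infty$ or $\SL$, and then use the absence of finite $\SL$-orbits on $\mathbb P^1$ to rule out $\SL$. The paper takes the shorter, self-contained route: it simply notes that algebraicity along the $C$-fiber forces (via Proposition \ref{prop:singappIsom}) the local $\nabla_0$-invariant germ $L_0$ to have algebraic closure, and \emph{this alone} forces the Galois group to be $C_\infty$ or $D_\infty$, since among the possible irregular Galois groups only these admit a globally (multi-)invariant line subbundle of the right kind. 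No appeal to \cite{LPT} is needed. Your route is not wrong, but it is heavier machinery than required, and it is organizationally awkward: in the paper Corollary \ref{cor:structure} is stated \emph{after} Proposition \ref{prop:singappGarniersol} and its very statement refers back to that proposition, so citing it here is at least stylistically circular even if the underlying theorem from \cite{LPT} is logically independent. If you want to keep your argument, cite the theorem of \cite{LPT} directly rather than Corollary \ref{cor:structure}; better still, drop that step entirely and argue as the paper does, since the finite-orbit reasoning you already use in your last paragraph suffices by itself to pin down the Galois group.
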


\begin{proof}The global deformation space of the irregular curve including the apparent singular point
may be viewed as a fiber bundle over the deformation space of the strict irregular curve, with fiber $C$
corresponding to the position of the apparent singular point for a fixed normalized equation.
Clearly, the deformation of $(E_0,\nabla_0)$ is algebraic if and only if it is algebraic in each of these two directions.  
But to be algebraic along $C$-fibers implies, due to Proposition \ref{prop:singappIsom}, that
the local analytic line bundle $L_0$ have algebraic closure. And this implies that the Galois group
must be $C_\infty$ or $D_\infty$ (the third possibility $\SL$ in the irregular case is excluded here).
\end{proof}

Proposition \ref{prop:singappGarniersol} can immediately be generalized to the case of several
apparent singular points.

\begin{remark}\label{rem:ApparentDihedral}
In the dihedral case $D_\infty$, an algebraic solution of a Garnier system with apparent singular point,
like in Proposition \ref{prop:singappGarniersol}, always arises in family, as limit of algebraic 
solutions with linear monodromy $D_\infty$ and no apparent singular points like in section \ref{sec:classicalsol}.
Indeed, from the monodromy side, this can be written as
$$M_1\cdots M_n=I\ \text{with}\ M_{n-2}=\begin{pmatrix}0&-\lambda\\ \lambda^{-1}&0\end{pmatrix},\ 
M_{n-1}=\begin{pmatrix}0&1\\ -1&0\end{pmatrix},\ 
\underbrace{M_{n}=\begin{pmatrix}1&0\\ 0&1\end{pmatrix}}_{\text{apparent}}$$
$$\rightsquigarrow
\tilde M_{n-2}=\begin{pmatrix}0&-\lambda_1\\ \lambda_1^{-1}&0\end{pmatrix},\ 
\tilde M_{n-1}=\begin{pmatrix}0&1\\ -1&0\end{pmatrix},\ 
\underbrace{\tilde M_{n}=\begin{pmatrix}\lambda_2&0\\ 0&\lambda_2^{-1}\end{pmatrix}}_{\text{non apparent}}$$
where $\lambda_1\lambda_2=\lambda$. For the differential equation, the apparent point is replaced by a non apparent
logarithmic singular point.
\end{remark}

\subsection{Structure theorem}

The key result for our classification is the:

\begin{thm}[\cite{LPT}]Let $(E,\nabla)$ be a flat meromorphic $\mathfrak{sl}_2$-connection 
on a projective manifold $X$. Then at least one of the following assertions holds true.
\begin{enumerate}
\item Maybe after passing to a (possibly ramified) two-fold cover $f: X' \to X$,
the connection $(E,\nabla)$ is equivalent to a diagonal connection on the trivial bundle:
$$\nabla=d+\begin{pmatrix}\omega&0\\ 0&-\omega\end{pmatrix}$$
with $\omega$ a rational closed $1$-form on $X$.
\item There exists a rational map $\Phi: X \dashrightarrow C$ to a curve and a meromorphic connection $(E_0,\nabla_0)$ on $C$
such that $(E,\nabla)$ is equivalent to $\Phi^*(E_0,\nabla_0)$ by bundle transformation.
\end{enumerate}
\end{thm}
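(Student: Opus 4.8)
The plan is to pass from the connection to its associated Riccati foliation and then invoke the classification of transversely projective foliations on projective manifolds. First I would projectivize: let $\pi\colon P=\mathbb{P}(E)\to X$ be the associated $\mathbb{P}^1$-bundle. The horizontal distribution of $\nabla$ defines a codimension one holomorphic foliation $\mathcal F$ on $P$, with poles along $\pi^{-1}(D)$ ($D$ the polar divisor), transverse to the generic fibre of $\pi$; this is a \emph{Riccati foliation} relative to $\pi$, and it carries a transversely projective structure developed along the fibres of $\pi$. Conversely, $(E,\nabla)$ is recovered, up to bundle transformation, from $(P,\mathcal F)$ together with this transverse structure. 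So it is enough to establish the analogous dichotomy for $\mathcal F$ and then translate it back.

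Next I would apply the structure theorem for transversely projective foliations on complex projective manifolds \cite{LPT}: up to bimeromorphic equivalence and a generically finite pullback, such a foliation is of one of the following types: (a) transversely affine, possibly only after a (possibly ramified) two-fold cover; (b) the pullback, by a rational map, of a Riccati foliation on a ruled surface over a curve $C$; (c) the pullback of one of the ``modular'' Riccati foliations on a quotient of a polydisc $\mathbb{H}^m$ by an irreducible lattice in $\mathrm{PSL}_2(\mathbb{R})^m$. The proof of this classification is the hard core: it rests on the Corlette--Simpson theorem \cite{CS} applied to the monodromy on the regular locus $X\setminus D$, on the analysis of the developing map and the associated $\mathfrak{sl}_2$-valued transverse $1$-form, and on the geometry of codimension one foliations together with the structure of closed rational and logarithmic $1$-forms. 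The Stokes data attached to the irregular poles is purely formal-meromorphic and is transported unchanged under these reductions, so it introduces no additional case.

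It then remains to translate each case. In case (a), a transversely affine structure compatible with the trace-free normalisation becomes, after the two-fold cover $f\colon X'\to X$ trivialising the residual $\mathbb{Z}/2$-monodromy, a transversely Euclidean structure; this is exactly a splitting $E\cong L\oplus L^{-1}$ into $\nabla$-invariant line subbundles, and, writing $\omega$ for the logarithmic derivative of the induced rank one connection on $L$, flatness forces $d\omega=0$, so after trivialising by a birational bundle transformation one obtains the diagonal normal form of assertion $(1)$ with $\omega$ a rational closed $1$-form. In cases (b) and (c): since $\mathcal F$ is Riccati relative to $\pi$ while the target Riccati foliation lives over a curve, the composed rational map $P\dashrightarrow C$ is constant on the rational fibres of $\pi$ (immediate when $C$ has positive genus, and otherwise handled by a separate argument, or absorbed into case (a)), hence descends to a rational map $\Phi\colon X\dashrightarrow C$; comparing the transverse structures over the generic point of $C$ then identifies $(E,\nabla)$ with $\Phi^*(E_0,\nabla_0)$ up to bundle transformation, for a meromorphic connection $(E_0,\nabla_0)$ on $C$. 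In case (c) this $C$ is a modular curve and $(E_0,\nabla_0)$ a uniformising-type connection, so the single rank two system always factors through one factor of the polydisc; this gives assertion $(2)$.

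The step I expect to be the main obstacle is the foliation classification invoked in the second paragraph: globally controlling the transverse projective structure, and in particular its behaviour along the (possibly irregular) polar divisor, so as to exclude any further case and to produce the three normal forms. Once that is in hand, the passage back to connections is essentially bookkeeping with bundle transformations, the only genuine verifications being the Euclidean normal form in case (a) and the descent of the fibration in cases (b)--(c).
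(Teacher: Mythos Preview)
The paper does not prove this theorem: it is quoted verbatim from \cite{LPT} and used as a black box to derive Corollary~\ref{cor:structure}. So there is no ``paper's own proof'' to compare against here.

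Your outline is a faithful sketch of how the result is obtained in \cite{LPT}: projectivize to a Riccati foliation, invoke the structure theorem for transversely projective foliations (itself resting on Corlette--Simpson \cite{CS}), and translate the three foliation cases back to connections. Two remarks. First, your handling of case~(c) is the genuinely delicate step and deserves more than one sentence: a transversely projective foliation may honestly come from a polydisc quotient with $m>1$, but a single flat $\mathfrak{sl}_2$-connection has monodromy in $\SL(\mathbb C)$, not in a product, so the representation must factor through one $\mathrm{PSL}_2(\mathbb R)$ factor and hence through a modular curve; this is why the connection statement has only two alternatives while the foliation statement has three. Second, in case~(b) your descent argument (the map $P\dashrightarrow C$ is constant on the $\mathbb P^1$-fibres of $\pi$) is correct in spirit but the parenthetical ``or absorbed into case~(a)'' when $C$ has genus zero is not quite right: one argues instead that a non-constant map from a rational fibre to $\mathbb P^1$ would force the foliation to be algebraic along that fibre, contradicting transversality of the Riccati structure. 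These are refinements, not gaps; your plan is the right one, and you correctly identify the structure theorem itself as the substantial input.
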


A direct consequence is the

\begin{cor}\label{cor:structure}
Any algebraic solution of an irregular Garnier system comes from
\begin{enumerate}
\item either the deformation pull-back from a fixed connection $(C_0,E_0,\nabla_0)$ 
with $\mathrm{SL}_2(\mathbb C)$ Galois group via an algebraic family of ramified covers,
\item or the deformation of a connection having Galois group $D_\infty$,
\item or the deformation of a connection having Galois group $C_\infty$ with apparent 
singular point(s) like in Proposition \ref{prop:singappGarniersol}.
\end{enumerate}
\end{cor}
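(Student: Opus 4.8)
The plan is to deduce Corollary~\ref{cor:structure} from the Structure Theorem of \cite{LPT} by interpreting an algebraic solution of an irregular Garnier system as a flat meromorphic $\mathfrak{sl}_2$-connection on a projective manifold and then carefully dividing into cases according to which alternative of the theorem holds. First I would recall, following \cite{Cousin,CousinHeu} in the logarithmic case and the discussion in Section~\ref{sec:IsomDef}, that an algebraic solution corresponds to an algebraic (in particular, finite-branched) leaf $\mathcal L$ of the isomonodromic foliation on the moduli space $\mathcal M$. Passing to a desingularization of the Zariski closure $\overline{\mathcal L}$, one obtains a smooth projective variety $X$ together with a family of connections over the restriction of the universal irregular curve $\mathcal C \to X$; concretely, the total space $\mathcal C$ carries a flat meromorphic $\mathfrak{sl}_2$-connection $(\mathcal E,\nabla)$, flat because the isomonodromy condition is exactly the integrability of this connection on the total space (as emphasized at the end of Section~\ref{sec:IsomDef}). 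Thus the Structure Theorem applies to $(\mathcal C,\mathcal E,\nabla)$ — possibly after pulling back to a resolution, since $\mathcal C$ may be singular over the locus where poles collide, but this does not affect the dichotomy.

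Next I would analyze the two alternatives. In case (1) of the theorem, after a two-fold cover $(\mathcal E,\nabla)$ becomes diagonal $d+\diag(\omega,-\omega)$ with $\omega$ rational closed; restricting to a generic fiber $C_t$, the connection on the curve has abelian monodromy and trivial Stokes data, so its Galois group lies in $C_\infty$ (if the two-fold cover is trivial on the fiber) or in $D_\infty$ (if it is not). Since irregular connections always have $>0$-dimensional Galois group by the exponential torus, and $C_\infty,D_\infty$ are the only abelian-up-to-index-two options on the list, we land in alternative (2) or (3) of the corollary. Here I must invoke Propositions~\ref{prop:cyclicvector}, \ref{prop:singappIsom} and \ref{prop:singappGarniersol}: the fact that we are producing a \emph{Garnier} solution (not just a deformation that degenerates in dimension) forces, in the $C_\infty$ case, the presence of apparent singular point(s) with the parabolic data having algebraic closure, which is precisely alternative (3); in the $D_\infty$ case one gets alternative (2), possibly with apparent singular points whose parabolic data again has algebraic closure (Remark~\ref{rem:ApparentDihedral}). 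In case (2) of the theorem, $(\mathcal E,\nabla) \cong \Phi^*(E_0,\nabla_0)$ for a rational map $\Phi$ to a curve $C$; restricting $\Phi$ to the fibers $C_t$ gives the algebraic family of ramified covers $\phi_t\colon C_t \to C$, and the fixed connection $(E_0,\nabla_0)$ either has Galois group $\SL(\C)$ — giving alternative (1) of the corollary — or has Galois group $C_\infty$ or $D_\infty$, in which case $\phi_t^*(E_0,\nabla_0)$ again has abelian-up-to-index-two Galois group on the fibers and we are reduced to the previous paragraph's analysis, landing in (2) or (3).

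The main obstacle I expect is the bookkeeping around base points and apparent singular points: the rational map $\Phi$ in alternative (2) of the Structure Theorem need not restrict to a morphism on every fiber $C_t$, and a ramified cover $\phi_t$ pulled back from a smooth $(E_0,\nabla_0)$ can create apparent singular points at critical values, so one must check that the resulting deformation of scalar equations is genuinely the Garnier deformation with the correct parabolic data evolving isomonodromically — this is where Propositions~\ref{prop:singappIsom} and \ref{prop:singappGarniersol} do the real work. A secondary subtlety is ensuring that when we pass to the two-fold cover in alternative (1), or resolve singularities of $\mathcal C$, we have not lost the Garnier-solution structure; this is handled by noting that all these operations are algebraic and birational over the base, so algebraicity and the isomonodromy foliation descend. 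Modulo this careful case-work, the corollary is a direct translation of \cite{LPT} into the Garnier language set up in Sections~\ref{sec:DiffEq}--\ref{sec:algsol}.
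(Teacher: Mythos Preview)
Your proposal is correct and follows essentially the same approach as the paper: interpret the algebraic solution as a flat meromorphic $\mathfrak{sl}_2$-connection on the (projective) total space of the deformation, apply the Structure Theorem of \cite{LPT}, and read off the three alternatives. The paper's own argument is extremely terse (three sentences), so your case analysis --- splitting alternative~(2) of the theorem according to whether $\mathrm{Gal}(E_0,\nabla_0)$ is $\SL(\mathbb C)$ or not, and invoking Propositions~\ref{prop:cyclicvector} and~\ref{prop:singappGarniersol} to force apparent singular points in the $C_\infty$ case --- is exactly the detail the paper leaves implicit.
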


This result has been recently proved in the Painlev\'e case in \cite{OO} by checking {\it a posteriori}
the known list of algebraic solutions.
We expect to have a similar result for isomonodromy equations for curves of genus $g>0$.
However, it is still not known if these equations are polynomial in such a generality, so that the question
of algebraicity of solutions does not make sense so far.

To prove the Corollary, we just notice that any solution (algebraic of not) comes from an isomonodromic deformation,
or equivalently a flat connection over the total space $\mathcal C$ of the deformation curve. 
If the solution is algebraic, then $\mathcal C$
is algebraic, as well as the flat connection. We can therefore apply the above Theorem, and deduce the Corollary.

\subsection{Algebraic solutions of Painlev\'e I-V equations}\label{sec:AlgebraicPainleveI-V}

Table \ref{table:AlgSolPainleve} provides the list of algebraic solutions for irregular Painlev\'e equations
up to symmetries. 

\begin{table}[h]
\caption{Algebraic solutions of irregular Painlev\'e equations}
\begin{center}
\begin{tabular}{|c|c|c|c|c|c|c|}
\hline 
 Isomonodromy  & solution & name & Local formal   & Galois & pull-back ? & apparent\\
 equation &  & & data & group & & pole ? \\
\hline
$P_{V}\left(\frac{\theta^2}{2},-\frac{\theta^2}{2},0,-\frac{1}{2}\right)$ & 
$q=-1$ & $P_V$-rat & $\begin{pmatrix}0&1&0\\ \theta-1&0&\theta\end{pmatrix}$ & $\SL$ & yes & no \\
\hline
$P_{V}\left(\frac{\theta^2}{2},-\frac{1}{2},\theta,-\frac{1}{2}\right)$ &
$q=\frac{t}{\theta}+1$ & $P_V$-Lag & $\begin{pmatrix}0&1&0\\ 0&\theta&\theta\end{pmatrix}$ & $C_\infty$ & no & yes\\
\hline
$P_{V}\left(\frac{\theta^2}{2},-\frac{1}{8},-2,0\right)$ &
$q=2\frac{\sqrt{t}}{\theta}+1$ & $P_V$-alg & $\begin{pmatrix}0&\frac{1}{2}&0\\ \frac{1}{2}&0&\theta\end{pmatrix}$ & $D_\infty$ & no & no\\
\hline
 $P_{IV}\left(0,-\frac{2}{9}\right)$ & $q=-2t/3$ & $P_{IV}$-rat & $\begin{pmatrix}0&2\\ -\frac{2}{3}&0\end{pmatrix}$ 
 & $\SL$ & yes & no \\
\hline
 $P_{IV}\left(0,-2\right)$ & $q=-2t$ & $P_{IV}$-Her & $\begin{pmatrix}0&2\\ 0&0\end{pmatrix}$ & $C_\infty$ & yes & yes \\
\hline
 $P_{III}\left(4\theta,-4\theta,4,-4\right)$ & $q=\sqrt{t}$ & $P_{III}^{D_6}$-alg & $\begin{pmatrix}1&1\\ \theta&\theta\end{pmatrix}$ & $\SL$ & yes & no \\
\hline
 $P_{III}\left(4,-4,0,0\right)$ & $q=\sqrt{t}$ & $P_{III}^{D_8}$-alg & $\begin{pmatrix}\frac{1}{2}&\frac{1}{2}\\ 0&0\end{pmatrix}$ & $D_\infty$ & yes & no \\
\hline
 $P_{III}\left(-8,0,0,-4\right)$ & $q=\left(-\frac{t}{2}\right)^{1/3}$ & $P_{III}^{D_7}$-alg & $\begin{pmatrix}1&\frac{1}{2}\\ 0&0\end{pmatrix}$ & $\SL$ & yes & no \\
\hline
 $P_{II}\left(0\right)$ & $q=0$ & $P_{34}$-rat & $\begin{pmatrix}0&\frac{3}{2}\\ \frac{1}{2}&0\end{pmatrix}$ 
 & $D_\infty$ & yes & no \\
&& $P_{II}$-rat & $\begin{pmatrix}3\\ 1\end{pmatrix}$ & $\SL$ & yes & no \\
\hline
\end{tabular}
\end{center}
\label{table:AlgSolPainleve}
\end{table}

The name of solutions follows from \cite{OO}. Here ``rat'' and ``alg'' stand for ``rational'' and ``algebraic''
while ``Lag'' and ``Her'' stand for ``Laguerre'' and ``Hermite'' polynomials; also notation $P_{34}$ refers to Gambier's list.
The $P_V$ equation, when $\delta=0$, is equivalent to $P_{III}$ and solutions $P_{V}$-alg and $P_{III}^{D_6}$-alg
are therefore equivalent, but with different kind of linear local data, and this is why we keep the two solutions in the list.
For the same reason, we might consider $P_{34}$-rat and $P_{II}$-rat as different solutions.

\begin{remark}In table \ref{table:AlgSolPainleve}, apart pull-back solutions listed in tables
\ref{table:irregular} and \ref{table:irregularConfl}, there are two more pull-back solutions 
that correspond to dihedral Galois groups.  
\end{remark}

\section{Ramified covers and differential equations}\label{sec:RamifiedCovers}

Given a ramified cover $\phi:C\to C_0$ of degree $d$, and given a point $c_0\in C_0$, we can associate 
the pull-back divisor $\phi^*[c_0]=m_1[t_1]+\cdots+m_s[t_s]$ with $t_i\in C$ pair-wise distinct for $i=1,\ldots,s$.
This means that $\{t_1,\ldots,t_s\}$ is the fiber of $\phi$, and through convenient local coordinates $x_i$ near $t_i$, 
$\phi(x_i)=(x_i)^{m_i}$; we have $m_1+\cdots+m_s=d$, and for generic $t_0$, we have all $m_i=1$. 
We can therefore associate to $\phi$ its {\bf passport}
\begin{equation}\label{eq:passport}
\begin{pmatrix}\left[\begin{matrix}m_{1,1}\\ \vdots\\ m_{1,s_1}\end{matrix}\right] & \cdots &
\left[\begin{matrix}m_{\nu,1}\\ \vdots\\ m_{\nu,s_\nu}\end{matrix}\right]\end{pmatrix},\ \ \ \sum_{l=1}^{s_k}m_{k,l}=d\ \text{for}\ k=1,\ldots,\nu
\end{equation}
It is the data, for each critical value $c_1,\ldots,c_\nu\in C_0$ of $\phi$, of the corresponding partition
$m_{k,1}+\cdots+m_{k,s_k}=d$: there are $s_k$ points in the fiber $\phi^{-1}(c_k)$ with multiplicities $m_{k,l}$, $l=1,\ldots,s_k$.
The {\bf Riemann-Hurwitz} formula writes
\begin{equation}\label{eq:RiemHurw}
\underbrace{2-2g(C)}_{\chi(C)}=d(\underbrace{2-2g(C_0)}_{\chi(C_0)})-R\ \ \ \text{where}\ \ \ R=\text{total ramification}:=
\sum_{k=1}^n(d-s_k).
\end{equation}
Given a differential equation $(E_0,\nabla_0)$ with polar divisor $D$ on $C_0$, we can consider the pull-back
$(E,\nabla):=\phi^*(E_0,\nabla_0)$. It is easy to deduce the local formal data of $(E,\nabla)$ from that one
of $(E_0,\nabla_0)$ and the passport. Precisely, let $p\in C$ be a point of multiplicity $m$ for  $\phi$ (ramification $r=m-1$);
then the local formal datas of $(E_0,\nabla_0)$ at $\phi(p)$ and $(E,\nabla)$ at $p$ are related by:
$$(\kappa,\theta)=(m\kappa_0,m\theta_0)$$
{\bf except} when $\kappa=0$, $\theta_0\in\mathbb Q\setminus\mathbb Z$ and $m\theta_0\in\mathbb Z$ 
where the singular point becomes apparent, i.e. can be deleted by bundle transformation; in that latter case,
we indeed delete the pole. We deduce, for the respective order of poles, that 
$$\ord_p(\nabla)\le m\cdot \ord_{\phi(p)}(\nabla_0)+1-m$$
with strict inequality in the special case above, and ramified case $\kappa_0\not\in\mathbb Z$ and $m>1$.
On the other hand, when $\phi(p)$ is not a pole for $\nabla_0$, then $p$ is also non singular for $\nabla$.

In the sequel, we still fix the differential equation $(C_0,E_0,\nabla_0)$ and we want to deform the ramified cover 
and pull-back equation
$$\phi_t:C_t\to C_0,\ \ \ (E_t,\nabla_t):=\phi_t^*(E_0,\nabla_0).$$
We consider an irreducible algebraic family, parametrized by say $P\ni t$ (irreducible and projective), 
and there is a Zariski open subset $U\subset P$
where the passport of $\phi_t$ is locally constant, as well as the number $B$ of its critical values outside 
of the (fixed) polar locus of $\nabla_0$. Note that $B$ bounds the dimension of $P$, and maybe switching
to a larger family, i.e. with a larger parameter space $P$, we can assume $B=\dim(P)$.
The deformation $t\mapsto(C_t,E_t,\nabla_t)$, being automatically isomonodromic, locally factors through 
the universal isomonodromic deformation (see \cite{Heu}). In general, it defines a projective subvariety contained in the
transcendental isomonodromy leaf $\mathcal L$ of the corresponding isomonodromy foliation $\mathcal F$. 
But if $P\to\mathcal L$ is locally dominant, then it is globally dominant and the entire leaf $\mathcal L$ itself is algebraic,
giving rise to an algebraic solution of the corresponding isomonodromy equation. 
A necessary condition for this is that the dimension $T$ of $\mathcal L$ is bounded by the dimension $B$ of $P$.
We will call {\bf admissible} the data of a differential equation $(C_0,E_0,\nabla_0)$ and a passport (\ref{eq:passport})
such that $B\ge T$. For the sequel, it is convenient to add in the passport all trivial fibers 
$$\underbrace{1+\cdots+1}_{d\ \text{times}}$$
appearing over poles so that we may assume that $\nu=n+B$ in (\ref{eq:passport}) with entries $k=1,\ldots,n$
corresponding to fibers over the poles of $\nabla_0$, and entries $k=n+1,\ldots,\nu$ corresponding to free critical points
that are deformed along the family. Our aim now is to classify those admissible data such that the pull-back differential 
equation has irregular Teichm\"uller dimension $T\le B$. We will see in the next section that this inequality gives 
very strong constraints.

\section{Scattering ramifications}\label{sec:Scattering}

Suppose we are given a normalized differential equation $(C_0,E_0,\nabla_0)$ with poles $p_1,\ldots,p_n\in C_0$, and local formal data 
$(\kappa_i,\theta_i)_{i=1,\ldots,n}$ like (\ref{eq:LocForData}). Suppose we are given 
a ramified covering $\phi:C\to C_0$ with passport $(d=m_{k,1}+\cdots+m_{k,s_k})_{k=1,\ldots,\nu}$ over $c_1,\ldots,c_\nu\in C_0$ like (\ref{eq:passport})
where
\begin{itemize}
\item $(m_{k,l})_l$ for $k=1,\cdots,n$ correspond to the multiplicities of $\phi$ along fibers over the poles of $\nabla_0$ ($c_k=p_k$), some of them being possibly unbranched, i.e. $m_{k,l}=1$ for all $l$;
\item $(m_{k,l})_l$ for $k=n+1,\ldots,n+b$ correspond to fibers of $\phi$ over non singular points of $\nabla_0$,
all of which are branching, i.e. $m_{k,l}>1$ for at least one $l$.
\end{itemize}
Denote by $R_k:=d-s_k$ the total ramification number of the fiber over $c_k$. We denote by $(C,E,\nabla)$ the normalized
equation that can be deduced from the pull-back $\phi^*(E_0,\nabla_0)$ by bundle transformation. 
Let $N_k$ denote the number of poles counted with multiplicity in the fiber over $c_k$. 
The irregular Teichm\"uller dimension of $(C,E,\nabla)$ is given by 
$$T=3g-3+N=3g-3+N_1+\cdots+N_n$$
where $g$ is the genus of $C$, given by Riemann-Hurwitz Formula (\ref{eq:RiemHurw}) with $R:=\sum_{k=1}^\nu R_k$.
We now explain how to compute $N_k$ by means of $(\kappa_k,\theta_k)$ and $(m_{k,l})_l$.
\begin{itemize}
\item If $\kappa_k=0$ and $\theta_k\not\in\mathbb Q\setminus\mathbb Z$, then 
$$N_k=s_k=d-R_k.$$
\item If $\kappa_k=0$ and $\theta_k$ has order $m>1$ modulo $\mathbb Z$, then 
$$N_k=d-R_k-\#\{l=1,\ldots,s_k\ \vert\ m\ \text{divides}\ m_{k,l}\}.$$
\item If $\kappa_k\in\mathbb Z_{>0}$, then 
$$N_k=\sum_{l=1}^{s_k}(m_{k,l}+1)=d(\kappa_k+1)-R_k.$$
\item If $\kappa_k\in\mathbb Z_{>0}-\frac{1}{2}$, then
$$N_k=d(\kappa_k+1)-R_k+\#\{l=1,\ldots,s_k\ \vert\ 2\ \text{does not divide}\ m_{k,l}\}.$$
\end{itemize}
We assume $\phi$ admissible, i.e. that $T\le B$.

In this section, we show that we can replace $\phi$ by another ramified cover $\phi'$ with more 
critical points but less multiplicity in fibers in such a way that $T-B$ can only decrease. The total ramification
will be unchanged, but will be scattered oustide of the polar locus. This will
allow us to replace the deformation of $\phi$ by the wider deformation of $\phi'$, so that we will
be able to recover $\phi$ (and its deformation) by confluence of critical values. By the way,
the passport of $\phi'$ will be as simple as possible and it will be easy to classify such covers.

The first step consists in scattering the branching points over critical values outside of the polar locus. 
We call {\bf simple branching} a fiber of the form 
$$\left[\begin{matrix}m_{k,1}\\ m_{k,2}\\ \vdots\\ m_{k,s_k}\end{matrix}\right]=\left[\begin{matrix}2\\ 1\\ \vdots\\ 1\end{matrix}\right]$$
i.e. with $R_k=1$.

\begin{lem}\label{lem:FibreCritique}
Let $n<k\le\nu$, i.e. $c_k$ not a pole of $\nabla$. 
We can deform $\phi\rightsquigarrow\phi'$ over a neighborhood of $c_k$ so that
the single fiber of $\phi$ with total ramification $R_k$ is replaced by $R_k$ simple branching fibers for $\phi'$.
We have increased $B$ without changing $R$ or $T$.
\end{lem}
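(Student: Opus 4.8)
The plan is to carry out the scattering purely locally over a small disc around $c_k$ and then glue back. First I would pick a coordinate disc $\Delta\subset C_0$ centred at $c_k$, small enough to contain no other critical value of $\phi$ and no pole of $\nabla_0$ (possible since $n<k\le\nu$, so $c_k$ is not a pole). Over such a disc the cover is a disjoint union of local models: $\phi^{-1}(\Delta)=\coprod_{l=1}^{s_k}\Delta_l$ with $\phi\vert_{\Delta_l}$ biholomorphically conjugate to $w\mapsto w^{m_{k,l}}$ on a disc, and the unbranched sheets ($m_{k,l}=1$) contribute unramified points to every fibre over $\Delta$ and can be ignored. The goal is to replace $\phi\vert_{\phi^{-1}(\Delta)}$ by a branched cover of $\Delta$ that has, over pairwise distinct interior points all different from $c_k$, exactly $\sum_l(m_{k,l}-1)=d-s_k=R_k$ simple branch points, and that coincides with the original cover over a collar neighbourhood of $\partial\Delta$ so that it glues with the unchanged $\phi$ over $C_0\setminus\Delta$.

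For the local construction I would argue topologically and then invoke Riemann's existence theorem. The monodromy of $\phi\vert_{\Delta_l}$ around $\partial\Delta$ is an $m_{k,l}$-cycle $\sigma_l$ acting on its $m_{k,l}$ sheets; writing $\sigma_l$ as a product of $m_{k,l}-1$ transpositions $\tau_1\cdots\tau_{m_{k,l}-1}$ (every $m$-cycle is such a product) produces a branched cover of $\Delta$ with $m_{k,l}-1$ simple branch points whose boundary monodromy is again $\sigma_l$; its total space is connected because $\sigma_l$ is a transitive single cycle, and an Euler characteristic count ($m_{k,l}-(m_{k,l}-1)=1$, one boundary circle) shows it is a disc, hence isomorphic to $\Delta_l$ over a collar of $\partial\Delta$. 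Doing this for every branched sheet, placing all the resulting branch points over pairwise distinct interior values of $\Delta$ (distinct also from values coming from other sheets, so each new fibre is a single simple branching), and gluing to $\phi$ over $C_0\setminus\Delta$ with the same boundary identifications, yields a topological branched cover $\phi'\colon C'\to C_0$ which, by Riemann existence, carries a unique complex structure making $\phi'$ holomorphic; since we only modified the cover over $\Delta$ through connected pieces, $C'$ is connected whenever $C$ is. The same deformation is visible concretely by Morsifying the local model $w\mapsto w^m$ to a primitive of $\prod_{j=1}^{m-1}(w-\epsilon a_j)$ with the $a_j$ distinct, which for small $\epsilon\ne0$ has $m-1$ simple critical points with distinct critical values and tends to $w^m/m$ as $\epsilon\to0$; letting the $R_k$ new branch values roam over the configuration space of $R_k$ points of $\Delta$ (and collapse back to $c_k$) gives an algebraic family interpolating between $\phi$ and $\phi'$, which is what ``deform $\phi\rightsquigarrow\phi'$'' and the subsequent confluence argument require.

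It remains to track the invariants. Outside $\Delta$ nothing is changed, so in particular the fibres over the poles $p_1,\dots,p_n$ (which all lie outside $\Delta$) are untouched and each $N_i$, hence $N=N_1+\cdots+N_n$, is unchanged. The total ramification is also unchanged: the $R_k$ units of ramification formerly concentrated in the fibre over $c_k$ are exactly redistributed, one unit apiece, over the $R_k$ new simple critical values, so $R$ is the same and Riemann--Hurwitz (\ref{eq:RiemHurw}) gives the same genus $g$ for $C'$ as for $C$; therefore $T=3g-3+N$ is unchanged. Finally the free critical values pass from the single value $c_k$ to $R_k$ values, all lying outside the polar locus, so $B$ grows by $R_k-1\ge 0$ (strictly when $R_k\ge 2$; when $R_k\le 1$ the fibre over $c_k$ is already simple and there is nothing to do). Hence $T-B$ can only decrease, which is exactly how the lemma will be used.

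I expect the only real obstacle to be the gluing step, i.e. making the (Morsified) local models agree with $\phi$ near $\partial\Delta$ so that $\phi'$ is a \emph{globally} well-defined branched cover; this is precisely why I would route the argument through the boundary-monodromy factorization and Riemann's existence theorem rather than through ad hoc holomorphic patching, the latter being awkward since a smooth cutoff destroys holomorphy. The combinatorial input --- that an $m$-cycle factors into $m-1$ transpositions and that the branch values can be chosen pairwise distinct so each new fibre is a single simple branching --- is elementary and I would not dwell on it.
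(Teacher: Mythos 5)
Your proof is correct and follows essentially the same route as the paper's: both define $\phi'$ by its monodromy over a small disc around $c_k$, decomposing each $m_{k,l}$-cycle into $m_{k,l}-1$ transpositions to obtain the $R_k$ simple branching fibers. You supply additional detail the paper leaves implicit (the gluing via boundary monodromy and Riemann existence, and the explicit bookkeeping of $N$, $R$, $g$, $T$, $B$), but the key idea is identical.
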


\begin{proof}Fix a disc $\Delta\subset C_0$ in which $c_k$ is the unique critical value of $\phi$.
The monodromy of the ramified covering is given by the permutation that decomposes into 
the product of $s_k$ cyclic permutations of orders $m_{k,1},\ldots,m_{k,s_k}$ with disjoint support.
In order to construct $\phi'$, it is enough to define its monodromy in $\Delta$, 
namely the data of $R_k$ transpositions whose product is the monodromy of $\phi$.
But it suffices to decompose each cyclic permutation of length $m_{k,l}$ above as the product of 
$m_{k,l}-1$ transpositions, which indeed gives 
$$\sum_{l=1}^{s_k}(m_{k,l}-1)=d-s_k=R_k$$
transpositions making the job.
\end{proof}

After applying Lemma (\ref{lem:FibreCritique}) to each critical fiber of $\phi$ outside the polar locus, 
we can now assume that all fibers $\phi^{-1}(c_k)$ are simple branching for $k>n$. By this way, 
we have maximized $B$ without touching at fibers over poles of $\nabla_0$ so far, so $N$ has not changed.
We now discuss how to simplify fibres over poles of $\nabla$ by putting some of their branch points out,
in additional simple branching fibers, without increasing $N-B$; this will however increase $B$.
We do this in successive lemmae discussing on the type of poles.

\begin{lem}\label{lem:FibreNonPeriodic}
Let $0\le k\le n$, i.e. $c_k=p_k$ is a pole of $\nabla$, and assume $\kappa_k=0$ 
and $\theta_k\not\in\mathbb Q\setminus\mathbb Z$.
We can deform $\phi\rightsquigarrow\phi'$ over a neighborhood of $c_k$ so that
the single fiber of $\phi$ with total ramification $R_k$ is replaced by a non branching fiber
over $p_k$ (i.e. $R_k'=0$), and $R_k$ simple branching fibers nearby.
We have $N_k'=d=N_k+R_k$ and $B'=B+R_k$.
\end{lem}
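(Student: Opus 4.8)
The statement is Lemma~\ref{lem:FibreNonPeriodic}, which concerns a logarithmic, non-resonant pole $c_k=p_k$ with exponent $\theta_k\notin\mathbb Q\setminus\mathbb Z$, and asserts that we may scatter \emph{all} of the ramification out of the fiber over $p_k$ into nearby simple-branching fibers, while keeping $p_k$ itself in the passport (now unbranched) and leaving $N-B$ unchanged. The approach is the same monodromy-theoretic surgery used in the proof of Lemma~\ref{lem:FibreCritique}: work in a disc $\Delta\subset C_0$ containing $p_k$ as the only critical value, encode $\phi$ over $\Delta\setminus\{p_k\}$ by its monodromy permutation $\sigma$, and replace $\phi$ by a cover $\phi'$ over $\Delta$ whose monodromy data is a \emph{new} factorization of $\sigma$: the loop around $p_k$ is assigned the identity (so $p_k$ becomes unbranched for $\phi'$), and $\sigma$ is written as a product of $R_k$ transpositions associated to $R_k$ new interior critical values in $\Delta$.

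First I would recall that $\sigma$, the monodromy of $\phi$ around $p_k$, decomposes into $s_k$ disjoint cycles of lengths $m_{k,1},\dots,m_{k,s_k}$ summing to $d$. Exactly as in Lemma~\ref{lem:FibreCritique}, each cycle of length $m_{k,l}$ factors as a product of $m_{k,l}-1$ transpositions, giving $\sum_l(m_{k,l}-1)=d-s_k=R_k$ transpositions in total whose product is $\sigma$. Since $p_k$ must remain a (trivial) entry of the passport, I keep one marked point at $p_k$ with trivial local monodromy and arrange the $R_k$ transpositions at $R_k$ distinct points of $\Delta\setminus\{p_k\}$; the compatibility condition (product of all local monodromies around points in $\Delta$ equals the monodromy around $\partial\Delta$, which is $\sigma$) is satisfied because the identity at $p_k$ contributes nothing. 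This defines $\phi':C'\to C_0$ which agrees with $\phi$ outside $\Delta$ and has the claimed passport near $p_k$.

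Next I would verify the numerics. The total ramification $R$ is unchanged: we removed $R_k$ units of ramification from the fiber over $p_k$ and redistributed them as $R_k$ simple branchings, so by Riemann--Hurwitz $g$ and hence $3g-3$ are unchanged. For $N$: since $\kappa_k=0$ and $\theta_k\notin\mathbb Q\setminus\mathbb Z$, the formula $N_k=s_k=d-R_k$ from the start of this section gives the old contribution; for $\phi'$ the fiber over $p_k$ is unbranched, so $s_k'=d$ and $N_k'=d=N_k+R_k$. Each new simple-branching fiber lies over a non-pole of $\nabla_0$, so it contributes $0$ to $N$; thus $N'=N+R_k$. Meanwhile $B'=B+R_k$ since we added exactly $R_k$ free critical values. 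Hence $T'-B'=(3g-3+N')-B'=(3g-3+N+R_k)-(B+R_k)=T-B$, as required, and $N'-B'=N-B$ is preserved (indeed both $N$ and $B$ increase by $R_k$). The local formal data at $p_k$ is still $(0,\theta_k)$: $\phi'$ is unbranched there, so pulling back the model $\Log$ with exponent $\theta_k\notin\mathbb Z$ reproduces the same type, and since $\theta_k\notin\mathbb Q\setminus\mathbb Z$ no accidental apparentness occurs at the new unbranched points over $p_k$ either.

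The only delicate point is algebraicity and coherence of the deformation: I must check that this local surgery can be performed inside an \emph{algebraic} family, i.e. that $\phi'$ together with its deformation (confluence of the $R_k$ new critical values back onto $p_k$) recovers the original family $\phi_t$. This is handled exactly as in the rest of the section: the Hurwitz space of covers with the prescribed passport is a quasi-projective variety, $\phi$ appears on a boundary stratum obtained by colliding the $R_k$ simple branch points at $p_k$, and the universal cover over that stratum restricts to $\phi$; since the pull-back deformation is automatically isomonodromic, widening the family to $\phi'$ keeps us inside the same isomonodromy foliation. So the hypotheses of the admissibility framework are preserved, and the main obstacle --- making sure the purely topological rearrangement of branch points globalizes to a genuine algebraic deformation --- is dispatched by the Hurwitz-space argument that underlies this entire section.
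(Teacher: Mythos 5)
Your proposal is correct and follows essentially the same route as the paper: the paper's proof simply says to proceed as in Lemma~\ref{lem:FibreCritique}, replacing the monodromy of $\phi^{-1}(p_k)$ by the product of the identity (the unbranched fiber over $p_k$) and $R_k$ transpositions. Your additional verification of the counts $N_k'=d=N_k+R_k$, $B'=B+R_k$ and the remark on realizing the surgery within the Hurwitz space are consistent elaborations of what the paper leaves implicit.
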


\begin{proof}We proceed like in the proof of Lemma \ref{lem:FibreCritique} by replacing 
the monodromy of $\phi^{-1}(p_k)$ by the product of the identity and $R_k$ transpositions.
Here, the identity stands for the trivial monodromy of the non branching fiber over $p_k$.
\end{proof}

\begin{lem}\label{lem:FibreIrregUnram}
Let $0\le k\le n$, i.e. $c_k=p_k$ is a pole of $\nabla$, and assume $\kappa_k\in\mathbb Z_{>0}$.
We can deform $\phi\rightsquigarrow\phi'$ over a neighborhood of $c_k$ so that
the single fiber of $\phi$ with total ramification $R_k$ is replaced by a non branching fiber
over $p_k$ (i.e. $R_k'=0$), and $R_k$ simple branching fibers nearby.
We have $N_k'=d(\kappa_k+1)=N_k+R_k$ and $B'=B+R_k$.
\end{lem}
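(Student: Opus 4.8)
The plan is to repeat the monodromy surgery of Lemmas \ref{lem:FibreCritique} and \ref{lem:FibreNonPeriodic}, now in the irregular unramified case $\kappa_k\in\mathbb Z_{>0}$, and then recompute the invariants $N_k$, $R$, $g$ and $B$ after the deformation to verify the announced equalities.

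First I would fix a disc $\Delta\subset C_0$ centered at $p_k$ in which $p_k$ is the only critical value of $\phi$, and record the monodromy of $\phi$ around $p_k$: it is a permutation $\sigma\in\mathfrak S_d$ which is a product of $s_k$ disjoint cycles of lengths $m_{k,1},\dots,m_{k,s_k}$. Writing each cycle of length $m_{k,l}$ as a product of $m_{k,l}-1$ transpositions yields $\sigma=\tau_1\cdots\tau_{R_k}$ with $R_k=\sum_l(m_{k,l}-1)=d-s_k$. I then define $\phi'$ by keeping $\phi$ unchanged outside $\Delta$ and, inside $\Delta$, prescribing trivial monodromy around $p_k$ together with $R_k$ new critical values carrying the transpositions $\tau_1,\dots,\tau_{R_k}$; since the product of $\mathrm{id},\tau_1,\dots,\tau_{R_k}$ equals $\sigma$, this data defines a genuine branched cover of $\Delta$ agreeing with $\phi$ on $\partial\Delta$, hence glues into a global cover $\phi'\colon C'\to C_0$. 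By construction the fiber over $p_k$ is now unbranched ($R_k'=0$) and the $R_k$ new fibers are simple branchings over points where $\nabla_0$ is regular.

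It then remains to do the bookkeeping. Over $p_k$ the pole is of type $\Irrun$ with $\kappa_k\in\mathbb Z_{>0}$, so by Proposition \ref{Prop:ramificationindex} a point of multiplicity $m$ carries a pole of type $\Irrun$ with irregularity $m\kappa_k$, i.e. of order $m\kappa_k+1$; with the fiber now unbranched we obtain $d$ poles, each of order $\kappa_k+1$, so $N_k'=d(\kappa_k+1)$, against the old value $N_k=d(\kappa_k+1)-R_k$, whence $N_k'=N_k+R_k$. The $R_k$ new simple branching fibers lie over regular points of $\nabla_0$, so their ramification points are non-singular for the pull-back and do not enter $N$; hence $N'=N+R_k$. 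The total ramification is unchanged ($R'=R$: we have removed $R_k$ from the fiber over $p_k$ and redistributed it over the new fibers), so Riemann--Hurwitz (\ref{eq:RiemHurw}) gives the same genus and $T$ increases exactly by $R_k$; and the number of critical values lying outside the polar locus grows by $R_k$, i.e. $B'=B+R_k$.

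The argument is routine; the single point deserving care is the realization step --- that prescribing the local monodromies $\mathrm{id},\tau_1,\dots,\tau_{R_k}$ with product $\sigma$ does define a branched cover of $\Delta$ restricting to the given one on $\partial\Delta$. This is the standard bubbling/braiding deformation of branched covers of a disc, already invoked in the two previous lemmas, and it is the (mild) main obstacle; one also checks, as there, that this deformation can be carried out within an algebraic family, so the passage $\phi\rightsquigarrow\phi'$ preserves admissibility --- indeed here $T-B$ is left unchanged.
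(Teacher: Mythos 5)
Your proof is correct and follows essentially the same route as the paper, which simply says ``the proof is the same as before,'' i.e.\ the transposition decomposition of the fiber monodromy from Lemmas \ref{lem:FibreCritique} and \ref{lem:FibreNonPeriodic}. Your explicit bookkeeping of $N_k'$, $R$, $g$, $T$ and $B$ (showing $T-B$ is unchanged) is a welcome elaboration of what the paper leaves implicit, and it matches the stated formulas.
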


The proof is the same as before.

\begin{lem}\label{lem:FibrePeriodic}
Let $0\le k\le n$, i.e. $c_k=p_k$ is a pole of $\nabla$, and assume $\kappa_k=0$
and $\theta_k$ has order $m>1$ modulo $\mathbb Z$.
We can deform $\phi\rightsquigarrow\phi'$ over a neighborhood of $p_k$ so that
the single fiber of $\phi$ with total ramification $R_k$ is replaced by fiber with passport
$$\left[\begin{matrix}m_{k,1}'\\  \vdots\\ m_{k,s_k'}'\end{matrix}\right]=\left[\begin{matrix}m\\ \vdots\\ m\\ 1\\ \vdots\\ 1\end{matrix}\right]$$
and only simple branching fibers nearby.
We have $N_k'\ge N_k$, $B'\ge B$ and $N_k'-B'\le N_k-B$.
\end{lem}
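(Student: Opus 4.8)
The plan is to mimic the proofs of Lemmas~\ref{lem:FibreCritique}--\ref{lem:FibreIrregUnram}: work in a disc $\Delta\subset C_0$ whose only critical value of $\phi$ is $p_k$, and build $\phi'$ by prescribing its monodromy inside $\Delta$. Put $a:=\#\{l=1,\ldots,s_k\mid m\ \text{divides}\ m_{k,l}\}$, so that the formula in the periodic case reads $N_k=d-R_k-a=s_k-a$. I would aim for the new fiber over $p_k$ to have passport consisting of $a$ cycles of length $m$ together with $d-am$ fixed points; then $R_k'=a(m-1)$, and since $m\ge 2$ no trivial block is divisible by $m$, so the same formula gives $N_k'=d-a(m-1)-a=d-am$. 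This target is feasible: each multiplicity with $m\mid m_{k,l}$ is $\ge m$, hence $am\le\sum_l m_{k,l}=d$ and $a(m-1)\le\sum_l(m_{k,l}-1)=R_k$, so the remaining ramification $R_k-a(m-1)$ to be scattered is nonnegative.

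Next I would realize this deformation combinatorially. Write the local monodromy of $\phi$ around $p_k$ as $\sigma=c_1\cdots c_{s_k}$, a product of disjoint cycles with $c_l$ of length $m_{k,l}$. Using the elementary identity that expresses an $r$-cycle as the product of an $m$-cycle and $r-m$ transpositions (for $2\le m\le r$), I peel one $m$-cycle off each $c_l$ with $m\mid m_{k,l}$, leaving $m_{k,l}-m$ transpositions, and I expand every other $c_l$ into $m_{k,l}-1$ transpositions. Because the supports of the $c_l$ are pairwise disjoint, this yields a factorization $\sigma=\sigma'\cdot\tau_1\cdots\tau_r$ with $\sigma'$ a product of $a$ disjoint $m$-cycles and $r=R_k-a(m-1)$ transpositions. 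This is exactly the monodromy datum of a branched cover of $\Delta$ with a fiber of the prescribed passport over $p_k$ and $r$ simple branching fibers nearby; gluing it to $\phi|_{C_0\setminus\Delta}$ defines $\phi'$. Connectedness of the new total space is preserved, since $\sigma$ lies in the subgroup generated by $\sigma',\tau_1,\ldots,\tau_r$, so the monodromy group of $\phi'$ contains that of $\phi$ and stays transitive.

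Finally I would check the numerology. We have added $r=R_k-a(m-1)$ simple branching fibers, so $B'=B+R_k-a(m-1)\ge B$; from $N_k'=d-am$ and $N_k=s_k-a=d-R_k-a$ we get $N_k'-N_k=R_k-a(m-1)\ge 0$, hence $N_k'\ge N_k$; and
\[
N_k'-B'=(d-am)-\bigl(B+R_k-a(m-1)\bigr)=d-R_k-a-B=N_k-B,
\]
so in fact equality holds, which is stronger than the claimed inequality. The one point needing genuine care is the choice of how many length-$m$ blocks to insert over $p_k$: taking exactly $a$ is forced, since fewer would make $N_k'$ too large and break $N_k'-B'\le N_k-B$, while this choice is simultaneously feasible ($am\le d$, $a(m-1)\le R_k$) and yields $N_k'\ge N_k$. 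Everything else is a direct transcription of the earlier scattering lemmas, so I do not expect a serious obstacle beyond keeping the bookkeeping of transposition counts straight.
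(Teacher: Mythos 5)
Your argument is correct, and the realization step—factor the local monodromy permutation $\sigma$ of $\phi$ in a small disc around $p_k$ into a product of a permutation with the prescribed cycle type over $p_k$ and a string of transpositions for the nearby simple fibers, then glue and check transitivity—is exactly the paper's technique. Where you genuinely diverge is in the choice of target passport. The paper applies the euclidean division $m_{k,l}=s^0\cdot m+s^1$ to each cycle separately, retaining $s^0$ blocks of length $m$ plus $s^1$ fixed points over $p_k$ and expelling only $s^0+s^1-1$ transpositions per cycle; you retain a single $m$-block for each $l$ with $m\mid m_{k,l}$ and none otherwise, expelling all remaining ramification. Both targets have the form $\left[m,\dots,m,1,\dots,1\right]$ required by the statement, and both feasibility counts ($am\le d$, $a(m-1)\le R_k$ in your version) check out. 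The difference surfaces in the bookkeeping: your choice yields the exact identity $N_k'-B'=N_k-B$, whereas the paper's yields $N_k'-B'=N_k-B-\sum_l\lfloor m_{k,l}/m\rfloor+a\le N_k-B$, which is strict as soon as some $m_{k,l}$ exceeds $m$ without being equal to a single copy of it in the relevant sense (i.e.\ some $l$ has $\lfloor m_{k,l}/m\rfloor\ge 2$, or $m_{k,l}>m$ with $m\nmid m_{k,l}$). Either version proves the lemma and suffices for the downstream use (preserving admissibility $T\le B$ under scattering before invoking Proposition \ref{Prop:DiarraPullBack}). Your closing observation—that retaining fewer than $a$ blocks of length $m$ would force $N_k'-B'>N_k-B$—is a worthwhile addition not made explicit in the paper: it isolates precisely why this fiber type, unlike those of Lemmas \ref{lem:FibreNonPeriodic} and \ref{lem:FibreIrregUnram}, cannot be scattered into a completely unbranched fiber.
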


\begin{proof}Here, we cannot just proceed as before. Indeed, if $m$ divides $m_{k,l}$, then there
is not pole on the preimage (or an apparent one that disappears after normalization); however, replacing 
by $m_{k,l}$ non branching points would increase $N_k$ by $m_{k,l}$, but increase $B$ only by $m_{k,l}-1$
so that $N-B$ increases by $1$. We thus have to take care of those points with $m$ dividing $m_{k,l}$.

Consider the euclidean division $m_{k,l}=s^0\cdot m+s^1$. Then, we can replace the point with 
multiplicity $m_{k,l}$ by 
\begin{itemize}
\item $s^0$ points of multiplicity $m$ in the fiber $\phi^{-1}(p_k)$ (contributing to no pole),
\item $s^1$ non branching points in the fiber $\phi^{-1}(p_k)$ (contributing to $s^1$ poles),
\item and $s^0+s^1-1$ additional simple branching fibers around.
\end{itemize}
To realize the deformation of $\phi$, we have to realize the corresponding monodromy representation,
which is easy in this case. Indeed, the concatenation of a cyclic permutation (or a cycle inside a permutation)
runs as follows:
$$\underbrace{(1\ldots \mu)}_{\text{over }p_k}=\underbrace{(1\ldots \mu')(\mu'+1\ldots\mu)}_{\text{over }p_k\text{ after concatenation}}\ \ \ \cdot \underbrace{(1,\mu'+1)}_{\text{simple branching fiber}}$$
where $1<\mu'<\mu$. We just have to repeat this procedure $s_0+s_1-1$ times.
By the way, we get $N_k'=N_k+s^1-1$ (or $N_k'=N_k=0$ if $m$ divides $m_{k,l}$)
 and $B'=B+s^0+s^1-1$. We proceed similarly with all $m_{k,l}\not=m,1$ in the fiber.
\end{proof}

\begin{lem}\label{lem:FibreIrregRam}
Let $0\le k\le n$, i.e. $c_k=p_k$ is a pole of $\nabla$, and assume $\kappa_k\in\mathbb Z_{>0}-\frac{1}{2}$.
We can deform $\phi\rightsquigarrow\phi'$ over a neighborhood of $p_k$ so that
the single fiber of $\phi$ with total ramification $R_k$ is replaced by fiber with passport
$$\left[\begin{matrix}m_{k,1}'\\  \vdots\\ m_{k,s_k'}'\end{matrix}\right]=\left[\begin{matrix}2\\ \vdots\\ 2\\ 1\\ \vdots\\ 1\end{matrix}\right]$$
and only simple branching fibers nearby.
We have  $N_k'-B'= N_k-B$.
\end{lem}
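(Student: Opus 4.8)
The plan is to mimic the scheme of Lemma~\ref{lem:FibrePeriodic}, but now with modulus $2$ in place of $m$: for a ramified irregular pole the preimage of a point of multiplicity $m$ has invariant $\kappa_t=m\kappa_k$, which stays ramified when $m$ is odd and becomes unramified when $m$ is even (Remark~\ref{Rem:ramificationindex}), and in both cases it remains a genuine pole because $\kappa_k>0$ (Proposition~\ref{Prop:ramificationindex}); so only the parity of the multiplicities matters, and, in contrast to Lemma~\ref{lem:FibrePeriodic}, no pole ever disappears, which is why we will get an equality rather than an inequality.

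First I would fix a disc $\Delta\subset C_0$ on which $p_k$ is the only critical value of $\phi$, so that the covering over $\Delta$ is encoded by its monodromy permutation $\sigma_k$, a product of $s_k$ disjoint cycles of lengths $m_{k,1},\ldots,m_{k,s_k}$. For each cycle of length $m=m_{k,l}$ I perform the Euclidean division $m=2s^0_l+s^1_l$ with $s^1_l\in\{0,1\}$ and, using repeatedly the concatenation identity
$$(1\ldots\mu)=(1\ldots\mu')(\mu'+1\ldots\mu)\cdot(1,\mu'+1),\qquad 1<\mu'<\mu,$$
already exploited in Lemma~\ref{lem:FibrePeriodic}, I write that $m$-cycle as the product of a permutation of cycle type $(2^{s^0_l},1^{s^1_l})$ with $s^0_l+s^1_l-1$ transpositions of pairwise disjoint, strictly smaller support. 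Carrying this out orbit by orbit realizes a deformation $\phi\rightsquigarrow\phi'$ supported in $\Delta$ whose fibre over $p_k$ has passport $[2,\ldots,2,1,\ldots,1]$ with $\sum_l s^0_l$ twos and $\sum_l s^1_l$ ones (note $2\sum_l s^0_l+\sum_l s^1_l=\sum_l m_{k,l}=d$), all other new critical fibres being simple branchings; away from $\Delta$ the cover is untouched, so the simple branching fibres produced earlier by Lemma~\ref{lem:FibreCritique} are unaffected.

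It then remains to keep track of $N$ and $B$. The number of new simple branching fibres is $\sum_l(s^0_l+s^1_l-1)$, so $B'=B+\sum_l(s^0_l+s^1_l-1)$. In the formula for $N_k$ in the ramified case, only $R_k\rightsquigarrow R_k'=\sum_l s^0_l$ and the count $\#\{l:2\nmid m_{k,l}\}$ can change; but the number of odd multiplicities in the new fibre is exactly $\sum_l s^1_l$, which equals the original number of odd multiplicities, so that count is unchanged. Hence $N_k'-N_k=R_k-R_k'=\sum_l(m_{k,l}-1-s^0_l)=\sum_l(s^0_l+s^1_l-1)=B'-B$, i.e. $N_k'-B'=N_k-B$. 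As a cross-check, one may instead compute directly that an even-multiplicity preimage of $p_k$ contributes $1+m\kappa_k$ to $N$ and an odd one $\tfrac{3}{2}+m\kappa_k$, and verify the same balance pole by pole.

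The one point requiring care — exactly as in Lemma~\ref{lem:FibrePeriodic} — is to be sure that the prescribed monodromy data is realized by an honest algebraic family of covers degenerating to $\phi$ when the new critical values collide onto $p_k$. This is, however, the familiar statement that a branched covering of a disc is determined by the conjugacy class of its monodromy and that collision of branch points multiplies the corresponding local monodromies; it is already used in the previous lemmas, so it poses no new obstacle, and the argument is complete.
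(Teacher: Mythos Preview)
Your proof is correct and follows exactly the approach the paper intends: the paper's own proof is the single sentence ``The proof is similar to that of Lemma~\ref{lem:FibrePeriodic}'', and you have carried out precisely that adaptation with $m=2$, including the key observation that the parity count $\#\{l:2\nmid m_{k,l}\}$ is preserved, which is what upgrades the inequality of Lemma~\ref{lem:FibrePeriodic} to the claimed equality $N_k'-B'=N_k-B$. One cosmetic quibble: the transpositions produced by iterating the concatenation identity need not have pairwise disjoint support (only the cycles they split do), but this has no bearing on the argument.
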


The proof is similar to that of Lemma \ref{lem:FibrePeriodic}. We call {\bf scattered admissible covering} 
(with respect to $(C_0,E_0,\nabla_0)$)
an admissible covering $\phi$ satisfying the conclusion for $\phi'$ in Lemmae \ref{lem:FibreCritique} - \ref{lem:FibreIrregRam}. 
We assume from now on that $\phi$ is scattered. We now deform the differential equation $(E_0,\nabla_0)$ on $C_0$
into a logarithmic one $(E_0',\nabla_0')$ without changing the ramified cover, in such a way that $N-B$ does not increase.
This will allow us to conclude with the classification established by the first author \cite{Diarra1} in the logarithmic case.

\begin{lem}\label{lem:IrregPoleLogPole}
Let $p_k$ be an irregular unramified pole of $\nabla$, i.e. $\kappa_k\in\mathbb Z_{>0}$.
We can deform the differential equation $(E_0,\nabla_0)\rightsquigarrow(E_0',\nabla_0')$ over a neighborhood of $p_k$ 
so that the deformed equation is also normalized with $\kappa_k+1$ simple poles instead 
of a single pole of multiplicity $\kappa_k+1$. If $\phi$ is a scattered covering, then $N'=N$ (and $B'=B$).
\end{lem}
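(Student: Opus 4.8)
The plan is to exhibit an explicit local \emph{deconfluence} of the pole $p_k$ on the base and to observe that a scattered covering is unramified over $p_k$, so that splitting the pole has no effect on the pull-back counts. First I would record the key consequence of the scattered hypothesis: by Lemma \ref{lem:FibreIrregUnram}, over an irregular unramified pole the fiber of a scattered covering is non-branching, so $R_k=0$, $N_k=d(\kappa_k+1)$, and $\phi$ is étale over a disc $\Delta\subset C_0$ centred at $p_k$, say $\phi^{-1}(\Delta)=\Delta_1\sqcup\cdots\sqcup\Delta_d$ with each $\phi|_{\Delta_j}$ biholomorphic onto $\Delta$. In particular $\phi$ will remain unchanged throughout the argument; only the connection on $C_0$ is deformed, inside $\Delta$.

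Next I would construct the deformation $(E_0,\nabla_0)\rightsquigarrow(E_0',\nabla_0')$ supported on $\Delta$. Working in Sturm--Liouville form $u''=\tfrac{s(x)}{2}u$, the pole $p_k$ appears as a pole of $s$ of order $2\kappa_k+2=2(k+1)$; I would spread it into $k+1$ double poles at points $t_1(\epsilon),\dots,t_{k+1}(\epsilon)\in\Delta$ tending to $p_k$ as $\epsilon\to 0$, with local exponents $\theta_1',\dots,\theta_{k+1}'$ chosen generic — non-integer, and avoiding $\tfrac12+\mathbb Z$ if one wants the normalization to be unique in the sense of Proposition \ref{prop:normalizedequation} — so that for $\epsilon\neq0$ each of these is a genuine $\Log$ pole (non-resonant, non-apparent) and the equation is again normalized, the other poles being untouched. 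Intrinsically this is the classical confluence procedure read backwards: $\tfrac{dx}{x^{k+1}}$ is a limit of $\sum_j \tfrac{c_j\,dx}{x-t_j(\epsilon)}$ with residues blowing up in a coordinated way (a suitable combination of the $\theta_j'$ staying bounded and equal to $\theta_k$ in the limit), and the holomorphic tail $\tilde A$ can be carried along. One then fixes one small $\epsilon\neq0$ and sets $(E_0',\nabla_0'):=(E_0^{\epsilon},\nabla_0^{\epsilon})$.

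Finally I would do the count. Since $\phi$ is unchanged and étale over $\Delta$, for $\epsilon$ small every new pole $t_j(\epsilon)$ lies in $\Delta$ and has a non-branching fiber of $d$ points; by the formula for $N_k$ in the logarithmic case this pole therefore contributes exactly $d$ simple poles to the normalized pull-back (this holds whether $\theta_j'\notin\mathbb Q$ or $\theta_j'$ has finite order modulo $\mathbb Z$, since all multiplicities in the fiber equal $1$). Summing over $j=1,\dots,k+1$ replaces the original $N_k=d(\kappa_k+1)$ poles by $d(k+1)=d(\kappa_k+1)$ poles, while the poles over the other singular points are unaffected; hence $N'=N$. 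No critical value of $\phi$ is created or destroyed, so $B'=B$, and $R$ (hence $g(C)$ and $T=3g-3+N$) is unchanged as well. The main obstacle is precisely the middle step: producing the deconfluence so that the deformed equation is honestly normalized for $\epsilon\neq0$ with $k+1$ bona fide logarithmic poles and no spurious or apparent singularities, and checking that the $\SL$-normalization constraints on the admissible exponents (the parity/Fuchs condition in Proposition \ref{prop:normalizedequation}) leave enough freedom; once the local model in Sturm--Liouville coordinates is pinned down this is a routine verification, and the global and combinatorial parts are immediate from the scattered hypothesis.
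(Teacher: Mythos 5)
Your counting is correct and uses the scattered hypothesis exactly as intended: by Lemma \ref{lem:FibreIrregUnram} the fiber over an irregular unramified pole is unbranched, so each of the $\kappa_k+1$ new simple poles contributes $d$ poles upstairs, matching $N_k=d(\kappa_k+1)$, while $B$, $R$ and $g(C)$ are untouched. Where you genuinely diverge from the paper is in how $(E_0',\nabla_0')$ is produced. The paper never touches the coefficients: it factors the local monodromy $M$ of $\nabla_0$ around $p_k$ as a product $M=M_0\cdots M_{\kappa_k}$ of matrices $\neq\pm I$, realizes this factorization \emph{\`a la} Riemann--Hilbert as a logarithmic equation on a disc with $\kappa_k+1$ genuine simple poles and total monodromy $M$, and glues this local equation in place of the old one by surgery over the annulus (possible precisely because the two pieces have the same monodromy around $\partial\Delta$). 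This buys, for free, the two things your route has to work for: the new connection literally coincides with the old one outside $\Delta$, and each new pole is non-apparent by construction.

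By contrast your middle step, which you rightly identify as the crux, is not quite ``routine'' as written. A meromorphic Sturm--Liouville coefficient cannot be deformed ``supported on $\Delta$'': if $s_\epsilon=s$ outside $\Delta$, then $s_\epsilon=s$ everywhere. What you can do is replace the order-$(2\kappa_k+2)$ principal part of $s$ at $p_k$ by $\kappa_k+1$ order-$2$ principal parts at the $t_j(\epsilon)$ (a Mittag--Leffler construction), but then (i) the modification is global, and while the principal parts at the other poles are unchanged, for a resonant pole of type $\Logres$ non-apparentness depends on the holomorphic tail, which does change, so a genericity argument is needed there; (ii) on $\mathbb P^1$ the behaviour at $\infty$ imposes Fuchs-type linear conditions on the new residues and leading coefficients, so the exponents $\theta_j'$ cannot all be prescribed independently (there is still enough freedom to make them all of infinite orbifold order, which is what the application in Proposition \ref{prop:classificationIrreg} actually requires); and (iii) the lemma is stated for a general base curve $C_0$, where a global $\SL$ scalar form and the Mittag--Leffler step need extra justification. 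None of this is fatal, but it is exactly the bookkeeping that the paper's monodromy-factorization-plus-surgery argument is designed to bypass.
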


\begin{proof}It is similar to the previous proofs: instead of dealing with the monodromy of the covering, 
we use the monodromy of the differential equation. We can write the monodromy $M$ of $\nabla_0$ around $p_k$
as the product of $\kappa_k+1$ non trivial linear transformations:
$$M=M_0\cdot M_1\cdots M_{\kappa_k},\ \ \ M_i\in\SL(\mathbb C),\ \ \ M_i\not=\pm I.$$
By standard arguments {\it \`a la Riemann-Hilbert}, we can first realize these matrices as local monodromy
of a differential equation over a disc with $(\kappa_k+1)$ simple poles in normal form, and total monodromy $M$.
Next, by surgery over the disc, we replace the single pole $p_k$ in $(E_0,\nabla_0)$ by this new differential 
equation, and get $(E_0',\nabla_0')$ with the desired properties.
\end{proof}

\begin{lem}\label{lem:IrregRamPoleLogPole}
Let $p_k$ be an irregular and ramified pole of $\nabla$, i.e. $\kappa_k\in\mathbb Z_{>0}-\frac{1}{2}$.
We can deform the differential equation $(E_0,\nabla_0)\rightsquigarrow(E_0',\nabla_0')$ over a neighborhood of $p_k$ 
so that the deformed equation is also normalized with $\bar\kappa_k+1=\kappa_k+\frac{3}{2}$ simple poles instead 
of a single pole of multiplicity $\bar\kappa_k+1$, one of which is at the critical point $p_k$ for $\phi$, with $\theta_k'=\frac{1}{2}$. If $\phi$ is a scattered covering, then $N'=N$ (and $B'=B$).
\end{lem}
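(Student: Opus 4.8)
The plan is to mimic the proof of Lemma \ref{lem:IrregPoleLogPole}, replacing the irregular ramified germ of $(E_0,\nabla_0)$ at $p_k$ by a logarithmic one via surgery over a small disc $\Delta\subset C_0$ centered at $p_k$, chosen small enough that $p_k$ is the only point of $\Delta$ lying over a pole of $\nabla_0$ or over a critical value of $\phi$. Writing $\kappa_k=k-\frac12$, the new germ is to carry $\bar\kappa_k+1=k+1$ simple poles, one of which must stay at $p_k$, since $p_k$ is in general a branch point of $\phi$ and cannot be moved. The one genuinely new feature compared with Lemma \ref{lem:IrregPoleLogPole} is that the pole kept at $p_k$ must be assigned the exponent $\theta_k'=\frac12$, and I indicate at the end why this is the only admissible value.

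The first step is to produce the monodromy of the new germ. By the ramified Stokes decomposition (\ref{eq:ramStokesDecomp}), the monodromy $M$ of $\nabla_0$ around $p_k$ is the product of the formal monodromy $\begin{pmatrix}0&1\\ -1&0\end{pmatrix}$ with the $2\kappa_k=2k-1$ unipotent Stokes matrices, hence $M\in\SL(\mathbb C)$. I would then factor
$$M=M_0M_1\cdots M_k$$
with $M_0$ semisimple satisfying $M_0^2=-I$ (eigenvalues $\pm\sqrt{-1}$), the local monodromy of a logarithmic pole of exponent $\frac12$, and with $M_1,\dots,M_k$ non-central regular semisimple, local monodromies of logarithmic poles with generic exponents $\theta_1',\dots,\theta_k'$. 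Such a factorization exists even in the extremal case $\kappa_k=\frac12$, i.e. $k=1$: the matrices of square $-I$ form a single two-dimensional conjugacy class on which $M_0\mapsto\tr(M_0^{-1}M)$ is non-constant when $M\ne\pm I$ (and identically zero when $M=\pm I$), so one may pick $M_0$ there with $M_0^{-1}M$ non-central and regular semisimple, then split $M_0^{-1}M$ into $k$ non-central regular semisimple factors in $\SL(\mathbb C)$.

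Next, exactly as in Lemma \ref{lem:IrregPoleLogPole}, I would use the Riemann-Hilbert correspondence on the punctured disc to realize a logarithmic $\SL(\mathbb C)$-connection on $\Delta$, in $\Log$ normal form at each of its $k+1$ simple poles, with the pole carrying $M_0$ placed at the center $p_k$, the others at arbitrary (hence generic) points of $\Delta$, and boundary monodromy $M$; surgery along $\partial\Delta$ glues this germ into $(E_0,\nabla_0)$, and after the standard normalization of exponents (Proposition \ref{prop:normalizedequation}) we get $(E_0',\nabla_0')$. The effect on the pull-back is read off via Proposition \ref{Prop:ramificationindex} and Remark \ref{Rem:ramificationindex}. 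Since $\phi$ is scattered, $\phi^{-1}(p_k)$ has the form $[2,\dots,2,1,\dots,1]$; let $a$ be the number of multiplicity-$2$ points and $b$ the number of multiplicity-$1$ points, so $d=2a+b$. Pulling back the pole at $p_k$ (now of type $\Log$ with $\theta_k'=\frac12$), each multiplicity-$2$ point gets $\tilde\theta=1\in\mathbb Z$, an apparent pole deleted after bundle transformation, while each multiplicity-$1$ point gets a simple pole with $\tilde\theta=\frac12$; so this pole contributes $b$ poles to $\phi^*(E_0',\nabla_0')$. Each of the $k$ other new poles, sitting at a regular value of $\phi$, pulls back to $d$ simple poles, contributing $kd$. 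In the original equation the pole $p_k$ contributed $2ak+b(k+1)$ poles to $\phi^*(E_0,\nabla_0)$ (multiplicity-$2$ points giving $\Irrun$ poles with $\tilde\kappa=2k-1$, multiplicity-$1$ points giving $\Irram$ poles with $\tilde\kappa=k-\frac12$); since $b+kd=b+k(2a+b)=2ak+b(k+1)$, we conclude $N'=N$. No critical value of $\phi$ outside the polar locus has changed ($p_k$ is still a pole, the $k$ new poles are unbranched, the nearby simple branching fibers are untouched), so $B'=B$; and $\phi$ — hence the genus of $C$ — being unchanged, $T'=T$, while $\phi$ stays a scattered covering for $(C_0',E_0',\nabla_0')$, its fiber over $p_k$ now being of the form of Lemma \ref{lem:FibrePeriodic} with $m=2$.

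I expect the real content to be combinatorial rather than analytic. The value $\theta_k'=\frac12$ is pinned down by $N'=N$ together with the scattering constraint: keeping the fiber $[2,\dots,2,1,\dots,1]$ over $p_k$ in scattered form forces the exponent there to have order exactly $2$ modulo $\mathbb Z$, i.e. $2\theta_k'\in\mathbb Z$ (so the multiplicity-$2$ points lose their poles, precisely the amount to be absorbed) while $\theta_k'\notin\mathbb Z$ (so the pole survives over the multiplicity-$1$ points) — and up to the sign and integer-shift ambiguity of exponents $\frac12$ is the only such value. Dually, keeping a pole at $p_k$ is forced: removing all of them would turn the branch point $p_k$ into a free critical value and, after then scattering its fiber into simple branchings, make $N-B$ strictly increase by $a$. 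The only step genuinely beyond the scheme of Lemma \ref{lem:IrregPoleLogPole} is the existence of the constrained factorization $M=M_0M_1\cdots M_k$ with $M_0^2=-I$ in the tight case $\kappa_k=\frac12$, which is disposed of by the conjugacy-class dimension count above.
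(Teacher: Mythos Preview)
Your proof is correct and follows essentially the same approach as the paper: factor the monodromy $M=M_0M_1\cdots M_{\bar\kappa_k}$ with $\mathrm{trace}(M_0)=0$ (your condition $M_0^2=-I$ is equivalent in $\SL(\mathbb C)$), realize $M_0$ as the local monodromy of a logarithmic pole with exponent $\tfrac12$, and perform the Riemann--Hilbert surgery of Lemma~\ref{lem:IrregPoleLogPole}. You supply considerably more detail than the paper --- the explicit count $N'=N$, the argument that $\theta_k'=\tfrac12$ is forced by the scattering constraint, and the existence of the constrained factorization in the tight case $\kappa_k=\tfrac12$ --- all of which is correct and welcome, but the underlying idea is identical.
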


\begin{proof}The proof is similar, except that we need $M=M_0\cdot M_1\cdots M_{\bar\kappa_k}$ with
$\mathrm{trace}(M_0)=0$.
Then we can realize $M_0$ as the monodromy of a logarithmic differential equation with exponent $\frac{1}{2}$.
\end{proof}

\section{Irregular Euler characteristic}\label{sec:IrregEuler}

Following Poincar\'e, we can associate, to a fuchsian differential equation,
an orbifold structure on the base curve $C_0$ (see also \cite{Diarra1})
and get the notion of orbifold Euler characteristic. Here we define an irregular version of it.
Given $(C_0,E_0,\nabla_0)$ in normal form, 
we define the {\bf orbifold order} $\nu_k$ at a logarithmic
singular point as the order of the local monodromy:
\begin{itemize}
\item $\nu_k\in\mathbb Z_{>1}$ is the order of $[\theta_k\ \mod\ \mathbb Z]$ if $\theta_k\in\mathbb Q\setminus\mathbb Z$,
\item $\nu_k=\infty$ if not.
\end{itemize}
We define the {\bf irregular Euler characteristic} of the differential equation as
$$\chi^{\mathrm{irr}}(C_0,E_0,\nabla_0):=2-2g_0-\sum_{k=1}^{n}(1+\kappa_k)+\sum_{\kappa_k=0}\frac{1}{\nu_k}.$$
In the logarithmic case, this notion coincide to the orbifold Euler charateristic $\chi^{\mathrm{irr}}=\chi^{\mathrm{orb}}$.
We note that the two operations of Lemmae \ref{lem:IrregPoleLogPole} and \ref{lem:IrregRamPoleLogPole},
replacing $(E_0,\nabla_0)$ by a logarithmic equation $(E_0',\nabla_0')$, does not change the irregular Euler characteristic.
Moreover, likely as in \cite[Prop. 2.5]{Diarra1}, we have the following characterization:

\begin{prop}\label{prop:chiPositive}
If $\chi^{\mathrm{irr}}(C_0,E_0,\nabla_0)\ge0$, then the Galois group $\mathrm{Gal}(E_0,\nabla_0)$ 
is virtually abelian. In particular, in the irregular case, $\nabla_0$ has only trivial Stokes and the Galois group 
is one-dimensional: it is diagonal, or dihedral.
\end{prop}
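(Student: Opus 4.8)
The plan is to use the hypothesis $\chi^{\mathrm{irr}}(C_0,E_0,\nabla_0)\ge 0$ to force the local formal data into a very short list, and then to read off $\mathrm{Gal}(E_0,\nabla_0)$ in each surviving case, using the computations of Section~\ref{sec:ConfluentHypergeometric} and the logarithmic statement \cite[Prop.~2.5]{Diarra1}. Conceptually, Lemmae~\ref{lem:IrregPoleLogPole} and~\ref{lem:IrregRamPoleLogPole} express that $\chi^{\mathrm{irr}}$ is the confluent analogue of the orbifold Euler characteristic, which is why the orbifold argument of \cite[Prop.~2.5]{Diarra1} should adapt; but the fine structure at the irregular poles (exponential torus and Stokes) has to be handled by hand.

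First I would extract the numerical constraints. Rewrite the inequality as $\sum_{k=1}^{n}(1+\kappa_k)-\sum_{\kappa_k=0}\tfrac{1}{\nu_k}\le 2-2g_0$. Here $1+\kappa_k\ge 1$, with $1+\kappa_k\ge\tfrac32$ at an irregular pole, while $1-\tfrac{1}{\nu_k}\ge\tfrac12$ at a logarithmic orbifold point and $=1$ at a puncture ($\nu_k=\infty$). This already gives $g_0\le 1$; if $g_0=1$ there is no pole at all and $\nabla_0$ is an abelian (hence diagonalisable) representation of the fundamental group of an elliptic curve, so we are done. Assume then $g_0=0$ and $\sum_k(1+\kappa_k)-\sum_{\kappa_k=0}\tfrac{1}{\nu_k}\le 2$; the same estimate shows there is at most one irregular pole, since two of them would contribute at least $3$. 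If there is none, all poles are logarithmic, $\chi^{\mathrm{irr}}=\chi^{\mathrm{orb}}\ge 0$, and \cite[Prop.~2.5]{Diarra1} applies: the orbifold fundamental group $\pi_1^{\mathrm{orb}}$ is virtually abelian, hence so is the Zariski closure of the monodromy, i.e. $\mathrm{Gal}(E_0,\nabla_0)$; no Stokes data occur since there is no irregular pole.

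It remains to treat exactly one irregular pole $p_1$, where the estimate is tighter. If $\kappa_1\in\mathbb Z_{>0}$, then $1+\kappa_1\ge 2$ forces $n=1$ and $\kappa_1=1$; since for a single pole the external monodromy around $p_1$ must be the identity, a direct computation with the Stokes decomposition~(\ref{eq:StokesDecomp}) shows that the formal monodromy is trivial, all Stokes matrices vanish and, after normalisation, $\theta_1=0$, so the normalised equation is $u''=c\,u$ on $\mathbb P^1$ and $\mathrm{Gal}(E_0,\nabla_0)=C_\infty$. If $\kappa_1\in\tfrac12+\mathbb Z_{\ge 0}$, then $1+\kappa_1\ge\tfrac32$ leaves room for at most one further pole, which must be logarithmic with $\nu=2$, and forces $\kappa_1=\tfrac12$; a lone ramified pole is impossible because $\begin{pmatrix}0&1\\-1&0\end{pmatrix}\begin{pmatrix}1&s_1\\0&1\end{pmatrix}=I$ has no solution, so $n=2$ and $\deg D=3$. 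By the rigidity recalled at the start of Section~\ref{sec:ConfluentHypergeometric}, the equation is then, up to bundle equivalence, the degenerate confluent hypergeometric equation~(\ref{eq:DC}) with $c=\theta_0\in\tfrac12+\mathbb Z$, whence $s=-2\cos(\pi\theta_0)=0$: the Stokes matrix is trivial and $\mathrm{Gal}(E_0,\nabla_0)=D_\infty$ is dihedral. In every case $\mathrm{Gal}(E_0,\nabla_0)$ is virtually abelian, and whenever $\nabla_0$ is irregular it is one-dimensional, diagonal or dihedral, with trivial Stokes, as claimed.

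The step I expect to be the main obstacle is the combinatorial reduction of the third paragraph: deducing from $\chi^{\mathrm{irr}}\ge 0$ that no configuration survives beyond the three described, which requires carefully accounting for resonant and apparent-type logarithmic poles ($\theta_i\in\mathbb Z$, for which $\nu_i=\infty$), and then matching each surviving formal datum with a concrete equation. For the half-integer pole this rests on the rigidity of degree-$3$ polar divisors on $\mathbb P^1$ and on the explicit monodromy/Stokes description of~(\ref{eq:DC}) in Section~\ref{sec:ConfluentHypergeometric}; the integer-pole case is handled by the single-pole monodromy relation.
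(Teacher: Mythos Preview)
Your proposal is correct and follows essentially the same route as the paper: both reduce the inequality $\chi^{\mathrm{irr}}\ge 0$ to a short list of formal data ($g_0=0$ with $\begin{pmatrix}1\\0\end{pmatrix}$, $\begin{pmatrix}\frac12\\0\end{pmatrix}$, or $\begin{pmatrix}0&\frac12\\ \frac12&0\end{pmatrix}$; the logarithmic case being handled by the orbifold argument of \cite[Prop.~2.5]{Diarra1}), and then analyse each via the single global monodromy relation to force trivial Stokes. The only differences are presentational: you treat $g_0=1$ separately as a pole-free elliptic case (the paper absorbs it into the ``torus quotient'' part of the logarithmic argument), and in the ramified case you pass through the rigidity of degree-$3$ polar divisors and the explicit equation~(\ref{eq:DC}), whereas the paper reads off $s=0$ directly from $\mathrm{trace}(M_\infty)=\mathrm{trace}(M_0^{-1})=2\cos(\pi\cdot\tfrac12)=0$ without naming the equation.
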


\begin{proof}In the logarithmic case, the curve $C_0$ with its orbifold  structure is a finite quotient of 
the sphere or the torus. The differential equation lifts as a differential equation with trivial or abelian 
monodromy group respectively. Since the Galois group is the Zariski closure of the monodromy 
group in the logarithmic case, we get that $\mathrm{Gal}(E_0,\nabla_0)$ is virtually abelian.
In the irregular case, $\chi^{\mathrm{irr}}\ge0$ gives us
$$-\chi^{\mathrm{irr}}=2g_0-2+\underbrace{\sum_{\kappa_k=0}(1-\frac{1}{\nu_k})}_{\ge 0}
+\underbrace{\sum_{\kappa_k>0}(1+\kappa_k)}_{\ge 1+\frac{1}{2}}\le0.$$
We promptly see that $g_0=0$, likely as in the logarithmic case, and we have 
the following possible formal local data up to bundle transformation
$$\begin{pmatrix}\kappa_1\cdots \kappa_n\\ \theta_1\cdots \theta_n\end{pmatrix}=\begin{pmatrix}1\\ 0\end{pmatrix},\ \ \ 
\begin{pmatrix}\frac{1}{2}\\ 0\end{pmatrix}\ \ \ \text{or}\ \ \ \begin{pmatrix}0&\frac{1}{2}\\ \frac{1}{2}&0\end{pmatrix}.$$
In the first case, we assume that we have an unramified irregular singular point, and the inequality
for $\chi^{\mathrm{irr}}$ gives no place for any other singular point; moreover, $\kappa=1$. 
The monodromy around the unique singular point decomposes as
$$M=
\begin{pmatrix}\lambda&0\\ 0&\lambda^{-1}\end{pmatrix}
\begin{pmatrix}1&s\\ 0&1\end{pmatrix}\begin{pmatrix}1&0\\ t&1\end{pmatrix}=\begin{pmatrix}\lambda(1+st)&\lambda s\\ \lambda^{-1}t&\lambda^{-1}\end{pmatrix}$$
which must be trivial, implying $\lambda=e^{i\pi\theta}=1$ and $s=t=0$. This means that $\theta\in\mathbb Z$,
or equivalently $\theta=0$ after bundle transformation, and we have trivial Stokes matrices. 
The local (and therefore global) Galois group is diagonal like the differential equation.

In the second and third cases, the irregular point is ramified and there is a place for a single logarithmic pole with orbifold
order $\nu=2$. The local monodromy decomposes as
$$M=
\begin{pmatrix}0&1\\ -1&0\end{pmatrix}
\begin{pmatrix}1&s\\ 0&1\end{pmatrix}
=\begin{pmatrix}0&1\\ -1&-s\end{pmatrix}$$
which is never the identity. The second case is therefore impossible. In the third case,
$M$ is also the local monodromy at the logarithmic pole: the trace must be zero, $s=0$, implying trivial Stokes again.
\end{proof}

\begin{prop}\label{prop:bounds} Let $(C_0,E_0,\nabla_0)$ be a normalized differential equation
with local formal data $(\kappa_k,\theta_k)_k$. Let $\phi:C\to C_0$ be a degree $d$ ramified cover
and $(E,\nabla)$ be the pull-back equation. Let $T$ be the dimension of the irregular Teichm\"uller 
deformation space for $(C,E,\nabla)$. Let $B$ be the number of critical values of $\phi$ outside 
the poles of $\nabla_0$. Then we have
$$T-B\ge g-1-d\cdot\chi^{\mathrm{irr}}(C_0,E_0,\nabla_0)$$
where $g$ is the genus of $C$.
\end{prop}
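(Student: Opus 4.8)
The plan is to turn this into a purely local estimate at each pole of $\nabla_0$ by feeding Riemann--Hurwitz into the definition of $\chi^{\mathrm{irr}}$, and then to check that estimate against the four formulas for $N_k$ listed at the beginning of Section~\ref{sec:Scattering}.

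First I would reorganize the total ramification. Write $R=\sum_k R_k$ and split it as $R=R_{\mathrm{pol}}+R_{\mathrm{free}}$, where $R_{\mathrm{pol}}=\sum_{k=1}^n R_k$ is the ramification carried by the fibres over the poles of $\nabla_0$ and $R_{\mathrm{free}}$ is the ramification over the remaining $B$ critical values. Each of those $B$ values is a genuine critical value, hence carries $R_k\ge 1$, so $R_{\mathrm{free}}\ge B$ and therefore $R-B\ge R_{\mathrm{pol}}$. Next, since $T=3g-3+N$ with $N=\sum_{k=1}^n N_k$, and since Riemann--Hurwitz (\ref{eq:RiemHurw}) gives $d(2g_0-2)=(2g-2)-R$ where $g_0$ is the genus of $C_0$, substituting into $\chi^{\mathrm{irr}}(C_0,E_0,\nabla_0)=2-2g_0-\sum_{k=1}^n(1+\kappa_k)+\sum_{\kappa_k=0}\frac1{\nu_k}$ shows that the asserted inequality is equivalent to
$$N-B+R\ \ge\ d\sum_{k=1}^n(1+\kappa_k)-d\sum_{\kappa_k=0}\frac1{\nu_k}.$$
By $R-B\ge R_{\mathrm{pol}}$ it then suffices to prove the stronger, completely local bound
$$N_k+R_k\ \ge\ d(1+\kappa_k)-\frac{d}{\nu_k}\qquad\text{for each }k=1,\dots,n,$$
with the convention $d/\nu_k:=0$ when $\kappa_k\ne0$ or $\nu_k=\infty$.

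The remaining step is the case distinction on the type of $p_k$, using the formulas for $N_k$ from Section~\ref{sec:Scattering}. If $\kappa_k=0$ with $\nu_k=\infty$, or $\kappa_k\in\mathbb Z_{>0}$, then $N_k=d(1+\kappa_k)-R_k$ and the bound holds with equality. If $\kappa_k\in\mathbb Z_{>0}-\frac12$, the formula for $N_k$ carries the extra nonnegative term $\#\{l:2\nmid m_{k,l}\}$, so the bound holds again. The only case requiring a genuine argument is $\kappa_k=0$ with $\nu_k=m>1$ finite, where $N_k+R_k=d-\#\{l:m\mid m_{k,l}\}$, so the bound reduces to $\#\{l:m\mid m_{k,l}\}\le d/m$; this is immediate because $\sum_l m_{k,l}=d$ and each index counted on the left contributes at least $m$ to that sum. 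Summing the $n$ local inequalities and adding $R_{\mathrm{free}}-B\ge0$ yields the proposition. I expect no geometric obstacle here: the only point that needs care is the bookkeeping in the Riemann--Hurwitz substitution and the consistent use of the convention for $1/\nu_k$ (recalling that a genuine logarithmic pole in normal form has $\theta_k\notin\mathbb Z$, so $\nu_k\ge2$ or $\nu_k=\infty$).
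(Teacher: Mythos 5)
Your proof is correct and follows essentially the same strategy as the paper: substitute Riemann--Hurwitz into the definition of $\chi^{\mathrm{irr}}$ to reduce the claim to the local inequality $N_k+R_k\ge d(1+\kappa_k)-d/\nu_k$ at each pole, then verify it case by case. The one organizational difference is that the paper first invokes the scattering lemmas (arguing that $N_k+R_k$ only decreases under scattering, so it suffices to bound the scattered fiber shapes), whereas you verify the local bounds directly from the general formulas for $N_k$ at the start of Section~\ref{sec:Scattering}; your route is slightly more self-contained, and your observation that $\#\{l:m\mid m_{k,l}\}\le d/m$ because each such index contributes at least $m$ to $\sum_l m_{k,l}=d$ is exactly the content hidden in the paper's computation $N_k+R_k=d(1-\tfrac1{\nu_k})+\tfrac{N_k}{\nu_k}$.
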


\begin{proof}Let us decompose
$$\begin{matrix}
N=N_1+\cdots+N_n\hfill\hfill\\
R=R_1+\cdots+R_n+B
\end{matrix}$$
where $N_k$ is the number of poles of $\nabla$ (counted with multiplicity) along the fiber $\phi^{-1}(p_k)$,
and $R_k$ is the total ramification along $\phi^{-1}(p_k)$. Now we have
$$T=3g-3+N=g-1+\underbrace{2g-2}_{=d(2g_0-2)+R}+N$$
(by Riemann-Hurwitz) which gives
$$T-B=g-1+d(2g_0-2)+\sum_{k=1}^n(N_k+R_k).$$
Let us lower bound $N_k+R_k$ in fonction of the type of pole $p_k$ for $\nabla_0$.
We note that, along scattering in Lemmae \ref{lem:FibreCritique} - \ref{lem:FibreIrregRam},
the value of $N_k+R_k$ can only decrease, so that it is enough to estimate a lower bound 
for a scattered covering $\phi$.

If $\kappa_k=0$ and $\nu_k\in\mathbb Z_{>0}\cup\{\infty\}$ is the orbifold order, 
then we have
$$d=N_k+m\cdot\nu_k\ \ \ \text{and}\ \ \ R_k=m\cdot(\nu_k-1)$$
so that 
$$N_k+R_k=N_k+\frac{d-N_k}{\nu_k}(\nu_k-1)
=d\left(1-\frac{1}{\nu_k}\right)+\frac{N_k}{\nu_k}\ge d\left(1-\frac{1}{\nu_k}\right).$$
If $\kappa_k\in\mathbb Z_{>0}$, then we find (after scattering)
$$N_k=d(1+\kappa_k)\ \ \ \text{and}\ \ \ R_k=0.$$
If $\kappa_k\in\mathbb Z_{>0}-\frac{1}{2}$, then we have
$$d=m_0+2m_1,\ \ \ N_k=m_0(1+\kappa_k+\frac{1}{2})+m_1(1+2\kappa_k)\ \ \ \text{and}\ \ \ R_k=m_1$$
so that $m_1=\frac{d-m_0}{2}$, and after substitution, we find
$$N_k+R_k=d\left(1+\kappa_k\right)+\frac{m_0}{2}\ge d\left(1+\kappa_k\right).$$
After summing for $k=1,\ldots,n$, we find the expected lower bound.
\end{proof}

\begin{cor}Under assumptions of Proposition \ref{prop:bounds}, if $\nabla_0$ is irregular with non trivial Stokes matrices
and $T-B\le0$, then we have $\chi^{\mathrm{irr}}<0$, $g_0=g=0$,  and 
$$d\vert \chi^{\mathrm{irr}}\vert\le 1.$$
\end{cor}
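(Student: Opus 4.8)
The plan is to read everything off from Proposition~\ref{prop:bounds}, together with Proposition~\ref{prop:chiPositive} and the Riemann--Hurwitz formula~(\ref{eq:RiemHurw}); essentially no new computation is needed.

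First I would invoke Proposition~\ref{prop:chiPositive}. Since $\nabla_0$ is irregular with non-trivial Stokes matrices, its Galois group is not one-dimensional diagonal or dihedral, hence not virtually abelian; so the contrapositive of Proposition~\ref{prop:chiPositive} forces $\chi^{\mathrm{irr}}(C_0,E_0,\nabla_0)<0$. This already gives the first assertion, and it also tells us that $-d\cdot\chi^{\mathrm{irr}}=d\,|\chi^{\mathrm{irr}}|$ is a strictly positive real number, since $d\ge 1$.

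Next I would feed this into the inequality of Proposition~\ref{prop:bounds}, namely $T-B\ge g-1-d\cdot\chi^{\mathrm{irr}}=g-1+d\,|\chi^{\mathrm{irr}}|$. Combining it with the hypothesis $T-B\le 0$ yields $g+d\,|\chi^{\mathrm{irr}}|\le 1$. Since $g\ge 0$ is an integer and $d\,|\chi^{\mathrm{irr}}|>0$, this forces $g=0$, and what remains of the inequality is precisely $d\,|\chi^{\mathrm{irr}}|\le 1$. Finally, to get $g_0=0$: with $g=0$ now known, Riemann--Hurwitz~(\ref{eq:RiemHurw}) reads $2=d(2-2g_0)-R$ with total ramification $R\ge 0$, so $d(2-2g_0)\ge 2>0$; since $d\ge 1$ this forces $2-2g_0>0$, i.e. $g_0=0$.

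I do not expect a genuine obstacle here: the statement is a formal consequence of the two preceding propositions. The only points requiring a little care are that $\chi^{\mathrm{irr}}$ need not be an integer, so one should extract $g=0$ from the strict positivity of $d\,|\chi^{\mathrm{irr}}|$ rather than from an integrality argument, and that the Riemann--Hurwitz step should use the already-established $g=0$ (and not circularly assume $g_0=0$).
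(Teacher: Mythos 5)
Your proposal is correct and follows essentially the same route as the paper: Proposition \ref{prop:chiPositive} gives $\chi^{\mathrm{irr}}<0$, and plugging this into the inequality of Proposition \ref{prop:bounds} together with $T-B\le 0$ forces $g=0$ and $d|\chi^{\mathrm{irr}}|\le 1$. The paper simply states ``$g=0$ (and therefore $g_0=0$)'' without spelling out the Riemann--Hurwitz justification you give, which is a harmless and correct addition.
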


\begin{proof}The inequality $\chi^{\mathrm{irr}}<0$ directly follows from Proposition \ref{prop:chiPositive}
and the fact we are assuming non trivial Stokes matrices. Then Proposition \ref{prop:bounds} gives
$$0\ge T-B\ge g-1+\underbrace{d\vert \chi^{\mathrm{irr}}\vert}_{>0}$$
which implies $g=0$ (and therefore $g_0=0$). Then we deduce the expected inequality.
\end{proof}

\section{Classification of covers}\label{sec:Classifcover}

In this section, we classify pull-back algebraic solutions of irregular Garnier systems
(see section \ref{sec:algsol}). In other words,
we list all differential equations $(C_0,E_0,\nabla_0)$ and ramified coverings $\phi:C\to C_0$
such that, by deforming $\phi\rightsquigarrow\phi_t$, we get a complete isomonodromic deformation $\phi_t^*(E_0,\nabla_0)$.
In fact, we omit classical solutions which will be discussed in section \ref{sec:classicalsol}
and therefore assume $\chi^{\mathrm{irr}}(C_0,E_0,\nabla_0)<0$ (see Proposition \ref{prop:chiPositive}).
Moreover, equations $(E_0,\nabla_0)$ are listed up to bundle transformation;
in particular, we can assume without lack of generality that $(E_0,\nabla_0)$ is a normalized equation.
In the sequel, we use notations of previous sections. In particular, 
$T$ is the dimension of the irregular Teichm\"uller space of the irregular curve
given by $\phi^*(C_0,E_0,\nabla_0)$, and $B$ is the dimension of deformation 
of $\phi$, obtained by moving the critical values of $\phi$ outside the poles of $\nabla_0$.
To get a complete deformation, we need $T\le B$, and we will assume $T>0$ (otherwise there is no deformation).

\begin{prop}[\cite{Diarra1}]\label{Prop:DiarraPullBack}
Assume $(C_0,E_0,\nabla_0)$ is a (normalized) logarithmic connection
with at least one pole $p_k$ having infinite orbifold order $\nu_k=\infty$, and assume 
$\phi:C\to C_0$ is a scattered ramified cover of degree $d\ge 2$. If $T\le B$ and
$\chi^{\mathrm{irr}}(C_0,E_0,\nabla_0)<0$, then $(C_0,E_0,\nabla_0)$ is hypergeometric,
and up to bundle transformation, we are
in the list of table \ref{table:logarithmic}.
\end{prop}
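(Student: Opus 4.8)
The plan is to reduce everything to a finite combinatorial search using the two inequalities already established: the lower bound of Proposition \ref{prop:bounds}, namely $T-B\ge g-1-d\cdot\chi^{\mathrm{irr}}(C_0,E_0,\nabla_0)$, and the hypothesis $T\le B$ together with $\chi^{\mathrm{irr}}<0$. Combining these immediately forces $g-1-d\cdot\chi^{\mathrm{irr}}\le 0$, hence (since $\chi^{\mathrm{irr}}<0$ and $d\ge 2$) one gets $g=0$, $g_0=0$, and the numerical constraint $d\cdot|\chi^{\mathrm{irr}}(C_0,E_0,\nabla_0)|\le 1$. So the first step is to record this consequence and observe that, since $(C_0,E_0,\nabla_0)$ is logarithmic on $\mathbb P^1$, we have $\chi^{\mathrm{irr}}=\chi^{\mathrm{orb}}=2-n+\sum_{k=1}^n \frac{1}{\nu_k}$ with at least one $\nu_k=\infty$.

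Second, I would enumerate the possible orbifold data $(n;\nu_1,\ldots,\nu_n)$ on $\mathbb P^1$ with at least one $\nu_k=\infty$ subject to $0<|\chi^{\mathrm{orb}}|\le 1/d\le 1/2$. Writing the pole with infinite order contributes $1$ to $n-2-\sum 1/\nu_k$, one checks that $n\le 3$: indeed $n\ge 4$ already gives $|\chi^{\mathrm{orb}}|\ge 4-2-3=$ too large unless some $\nu_k$ are small, but a short case check (using $1-1/\nu_k\ge 1/2$) rules out $n\ge 4$ under $|\chi^{\mathrm{orb}}|\le 1/2$. For $n=3$, with say $\nu_3=\infty$, we need $1-1/\nu_1-1/\nu_2 = |\chi^{\mathrm{orb}}|\le 1/2$, i.e. $1/\nu_1+1/\nu_2\ge 1/2$, and moreover $d\cdot(1-1/\nu_1-1/\nu_2)\le 1$; this leaves only finitely many $(\nu_1,\nu_2)$ and bounds $d$ in each case. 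For $n\le 2$ the Euler characteristic is $\ge 0$ or the configuration is not allowed (fewer than $3$ poles means the Teichmüller dimension of the pull-back is too small, or $\nabla_0$ is rigid). This is exactly the assertion that $(C_0,E_0,\nabla_0)$ must be hypergeometric, i.e. $n=3$ with three logarithmic poles.

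Third, having pinned down the finitely many candidate base equations (hypergeometric with orbifold signature $(\nu_1,\nu_2,\infty)$ and $1/\nu_1+1/\nu_2\ge 1/2$) and the corresponding bound on $d$, I would invoke the classification of scattered admissible covers in the logarithmic case from \cite{Diarra1}: for each such base, one lists the passports $\phi$ with the required ramification behaviour over the three poles (using the formulas for $N_k$ from section \ref{sec:Scattering}: $N_k=d-R_k-\#\{l: \nu_k\mid m_{k,l}\}$ over a finite-order pole and $N_k=d-R_k$ over the infinite-order pole) subject to $T=3g-3+N=N-3\le B$. Since by the scattering lemmae we may assume $\phi$ is scattered, the ramification over the poles is as simple as possible, and the remaining Riemann–Hurwitz bookkeeping together with the existence of a transitive monodromy representation of the right cycle type cuts the list down to table \ref{table:logarithmic}. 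The verification that each surviving passport is actually realized by a cover (Hurwitz existence) and conversely that each entry of the table satisfies all the constraints is the bulk of the remaining work.

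The main obstacle I anticipate is not the numerical reduction — that is a clean finite computation — but the passage from ``admissible numerical data'' to ``the precise list in table \ref{table:logarithmic}'': one must rule out passports that pass the inequalities $T\le B$, $g=0$, $d|\chi^{\mathrm{orb}}|\le 1$ but fail the Hurwitz realizability condition or fail to give a \emph{dominant} map onto the isomonodromy leaf (so that the pull-back deformation is genuinely complete rather than contained in a proper subvariety). Disentangling these borderline cases, and checking that the scattering procedure of section \ref{sec:Scattering} does not lose or create solutions, is where the care is needed; this is precisely the content imported from \cite{Diarra1}, and the proof here mainly consists in checking that the irregular hypotheses ($\chi^{\mathrm{irr}}<0$, presence of an infinite-order pole) reduce us exactly to the logarithmic situation treated there.
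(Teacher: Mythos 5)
Your overall route is the one the paper itself indicates: the published proof is essentially a citation of \cite{Diarra1} together with the remark that the statement ``can be proved directly by using the inequality of Lemma \ref{prop:bounds}'', and your reduction $0\ge T-B\ge g-1+d\vert\chi^{\mathrm{orb}}\vert$, hence $g=g_0=0$ and $d\vert\chi^{\mathrm{orb}}\vert\le 1$, followed by a finite enumeration of orbifold signatures containing an $\infty$, is exactly that direct argument. The deferral of the final passport-by-passport verification to the Hurwitz bookkeeping of \cite{Diarra1} is also consistent with what the paper does.

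There is, however, one concrete gap in your second step: the claim that $n\ge 4$ is ruled out by $\vert\chi^{\mathrm{orb}}\vert\le 1/d\le 1/2$ alone is false. The signature $(2,2,2,\infty)$ has $\vert\chi^{\mathrm{orb}}\vert=\tfrac12+\tfrac12+\tfrac12+1-2=\tfrac12$ exactly, so with $d=2$ it satisfies $d\vert\chi^{\mathrm{orb}}\vert=1\le 1$ and survives your numerical test, yet it is not hypergeometric and is absent from Table \ref{table:logarithmic}. To exclude it you need the refined estimate from the proof of Proposition \ref{prop:bounds}, namely $N_k+R_k\ge d\bigl(1-\tfrac{1}{\nu_k}\bigr)+\tfrac{N_k}{\nu_k}$, whose extra term forces $N_k=0$ at each order-$2$ pole when $T\le B$; for a degree-$2$ cover that means all three of those fibers are totally ramified, giving ramification $3$ over the poles while Riemann--Hurwitz allows total ramification only $R=2$ in genus $0$ --- a contradiction. (The same refined term is what eliminates several $n=3$ signatures such as $(2,5,\infty)$ and $(3,4,\infty)$ that pass $d\vert\chi^{\mathrm{orb}}\vert\le 1$ but do not appear in the table, so you should make explicit that the cut from ``numerically admissible'' to the actual table uses the $N_k/\nu_k$ contribution and the scattered-passport formulas for $N_k$, not only the Euler-characteristic bound.) With that correction your outline matches the intended proof.
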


\begin{proof}This proposition is proved in \cite[second table, page 142]{Diarra1}. It can
be proved directly by using the inequality of Lemma \ref{prop:bounds}.
\end{proof}

\begin{table}[h]
\caption{Logarithmic classification}
\begin{center}
\begin{tabular}{|c|c|c|}
\hline 
Local formal data $\theta_k$ & Degree & Covering passport \\
\hline
$( \frac{1}{2}, \frac{1}{3}, \theta)$ & 6 &
$\begin{pmatrix} \left[\begin{matrix}2\\2\\2\end{matrix}\right] & \left[\begin{matrix}3\\3\end{matrix}\right]  &\left[\begin{matrix}1\\ \vdots\\1\end{matrix}\right] & 3\end{pmatrix}$ \\
\hline
$( \frac{1}{2}, \frac{1}{3}, \theta)$ & 4 &
$\begin{pmatrix}  \left[\begin{matrix}2\\2\end{matrix}\right] & \left[\begin{matrix}3\\1\end{matrix}\right] & \left[\begin{matrix}1\\ \vdots\\1\end{matrix}\right] & 2\end{pmatrix}$ \\
\hline$( \frac{1}{2}, \frac{1}{3}, \theta)$ & 3 &
$\begin{pmatrix}  \left[\begin{matrix}2\\1\end{matrix}\right] & \left[\begin{matrix}3\end{matrix}\right] & \left[\begin{matrix}1\\ 1\\1\end{matrix}\right] & 1\end{pmatrix}$ \\
\hline
$( \frac{1}{2}, \frac{1}{4}, \theta)$ & 4 &
$\begin{pmatrix}  \left[\begin{matrix}2\\2\end{matrix}\right] & \left[\begin{matrix}4\end{matrix}\right] & \left[\begin{matrix}1\\ \vdots\\1\end{matrix}\right] & 1\end{pmatrix}$ \\
\hline
$( \frac{1}{2}, \theta_1, \theta_\infty)$ & 2 &
$\begin{pmatrix}  \left[\begin{matrix}2\end{matrix}\right] & \left[\begin{matrix}1\\1\end{matrix}\right] & \left[\begin{matrix}1\\ 1\end{matrix}\right] & 1\end{pmatrix}$ \\
\hline\end{tabular}
\end{center}
\label{table:logarithmic}
\end{table}

By confluence of the poles of the differential equation, we deduce the following:

\begin{prop}\label{prop:classificationIrreg}
Assume $(C_0,E_0,\nabla_0)$ is a (normalized) differential equation
with at least one irregular pole $p_k$ having non trivial Stokes matrices, and assume 
$\phi:C\to C_0$ is a scattered ramified cover of degree $d\ge 2$. If $T\le B$, 
then $(C_0,E_0,\nabla_0)$ is a degenerate hypergeometric equation,
and up to bundle transformation, we are
in the list of table \ref{table:irregular}.
\end{prop}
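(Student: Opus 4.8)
The plan is to reduce to the logarithmic classification of Proposition~\ref{Prop:DiarraPullBack} by spreading the irregular pole of $\nabla_0$ into logarithmic ones, apply that classification, and then recover $(C_0,E_0,\nabla_0)$ together with $\phi$ by confluence of poles. I would first extract the numerical constraints. Since $p_k$ is irregular with non trivial Stokes matrices, Proposition~\ref{prop:chiPositive} forces $\chi^{\mathrm{irr}}(C_0,E_0,\nabla_0)<0$, and the Corollary following Proposition~\ref{prop:bounds} gives $g_0=g=0$ together with $d\,|\chi^{\mathrm{irr}}|\le 1$; as $d\ge 2$ this means $|\chi^{\mathrm{irr}}|\le\tfrac12$. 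Writing $-\chi^{\mathrm{irr}}=-2+\sum_{\kappa_i>0}(1+\kappa_i)+\sum_{\kappa_i=0}\bigl(1-\tfrac1{\nu_i}\bigr)$ and using that an irregular pole contributes at least $\tfrac32$ while a logarithmic pole contributes at least $\tfrac12$, the bound $-\chi^{\mathrm{irr}}\le\tfrac12$ forces that there is a \emph{unique} irregular pole, with $\kappa_k\in\{\tfrac12,1,\tfrac32\}$, and at most two further logarithmic poles, of tightly constrained orbifold orders.

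Next, since $\phi$ is scattered, I would apply Lemma~\ref{lem:IrregRamPoleLogPole} when $\kappa_k\in\mathbb Z_{>0}-\tfrac12$ and Lemma~\ref{lem:IrregPoleLogPole} when $\kappa_k\in\mathbb Z_{>0}$, at $p_k$, to replace $(E_0,\nabla_0)$ by a \emph{normalized logarithmic} connection $(E_0',\nabla_0')$ carrying the \emph{same} scattered admissible cover $\phi$, with unchanged $N$ and $B$ (hence $T'=T\le B=B'$) and unchanged $\chi^{\mathrm{irr}}$. In doing so I would choose the local monodromy factorisation $M=M_0M_1\cdots M_{\bar\kappa_k}$ (with $\tr(M_0)=0$ in the ramified case) so that one of the newly created logarithmic poles has exponent outside $\mathbb Q$, i.e. infinite orbifold order; this is legitimate because $\bar\kappa_k\ge1$ leaves at least one factor free and the relevant trace is a non-constant function of it. Then $(E_0',\nabla_0')$ satisfies the hypotheses of Proposition~\ref{Prop:DiarraPullBack}, hence is hypergeometric (three simple poles) and the pair $\bigl((E_0',\nabla_0'),\phi\bigr)$ occurs in Table~\ref{table:logarithmic}, with exponents $(\tfrac12,\tfrac13,\theta)$, $(\tfrac12,\tfrac14,\theta)$ or $(\tfrac12,\theta_1,\theta_\infty)$.

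I would finish by reversing the construction via confluence. The irregular pole $p_k$ accounts for $\bar\kappa_k+1\in\{2,3\}$ of the three poles of $(E_0',\nabla_0')$, so $(E_0,\nabla_0)$ has at most one logarithmic pole besides $p_k$; matching the rows of Table~\ref{table:logarithmic} with the multiplicativity rule of Proposition~\ref{Prop:ramificationindex}, Remark~\ref{Rem:ramificationindex}, and the degree bound $d\,|\chi^{\mathrm{irr}}|\le1$ forces $(E_0,\nabla_0)$ to be a degenerate confluent hypergeometric equation~\eqref{eq:DC} with $c$ of order $3$ or $4$ or irrational modulo $\mathbb Z$ (when $\kappa_k=\tfrac12$), a Kummer equation~\eqref{eq:Kummer} with $\theta_0=\tfrac12$ (when $\kappa_k=1$), or the Airy equation~\eqref{eq:Airy} (when $\kappa_k=\tfrac32$). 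The covers are recovered by colliding the corresponding fibres of Table~\ref{table:logarithmic}; keeping only the collisions compatible with $T\le B$ yields Table~\ref{table:irregular}.

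The main obstacle is this last step. It is not purely combinatorial: for each row of Table~\ref{table:logarithmic} and each way of colliding its poles one must check that the confluence is compatible with the scatterings of Lemmas~\ref{lem:FibreCritique}--\ref{lem:FibreIrregRam} (so that the fibre produced over the new irregular pole has an admissible shape $[2,\dots,2,1,\dots,1]$ or trivial), use $N'=N$, $B'=B$ and $T\le B$ to discard the collisions producing too many poles, exclude a spare logarithmic pole of orbifold order $\ge5$ (impossible, since the corresponding hypergeometric equations are absent from Table~\ref{table:logarithmic} once $d\,|\chi^{\mathrm{irr}}|\le1$ is imposed), and set aside the special parameter values for which the Stokes data of $(E_0,\nabla_0)$ actually vanish. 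One must also confirm that the resulting family $\phi_t$ carries $B=\dim P$ genuinely independent free critical values, so that the pull-back deformation fills the whole isomonodromy leaf and not merely a proper subvariety.
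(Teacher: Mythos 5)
Your proposal follows the paper's own route: scatter the irregular pole into logarithmic ones via Lemmas \ref{lem:IrregPoleLogPole} and \ref{lem:IrregRamPoleLogPole} (keeping $N$, $B$ and $\chi^{\mathrm{irr}}$ fixed so that Proposition \ref{Prop:DiarraPullBack} applies), land in Table \ref{table:logarithmic}, and recover Table \ref{table:irregular} by confluence of the $\theta$-poles and possibly the $\tfrac12$-pole. The preliminary numerical bounds you extract ($g_0=g=0$, a unique irregular pole with $\kappa_k\in\{\tfrac12,1,\tfrac32\}$) are a harmless strengthening consistent with the paper's Corollary to Proposition \ref{prop:bounds}, and the remaining combinatorial checks you flag at the end are exactly the ones the paper also carries out (tersely) when matching the rows.
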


\begin{table}[h]
\caption{Irregular classification with scattered cover}
\begin{center}
\begin{tabular}{|c|c|c|c|c|}
\hline 
Local formal data  & Degree & Covering passport & Local formal data & Isomonodromy \\
for $(E_0,\nabla_0)$ &&& for $(E,\nabla)$& equation\\
\hline
$\begin{pmatrix}0& \frac{1}{2}\\ \frac{1}{3} & 0\end{pmatrix}$ & 6 &
$\begin{pmatrix}  \left[\begin{matrix}3\\3\end{matrix}\right]  & \left[\begin{matrix}2\\2\\2\end{matrix}\right] &  3\end{pmatrix}$ &
$\begin{pmatrix}1& 1&1\\ 0&0 & 1\end{pmatrix}$& $\mathrm{Gar}^3(1,1,1)$ \\
\hline
$\begin{pmatrix}0& \frac{1}{2}\\ \frac{1}{3} & 0\end{pmatrix}$ & 4 &
$\begin{pmatrix}   \left[\begin{matrix}3\\1\end{matrix}\right] & \left[\begin{matrix}2\\2\end{matrix}\right]  & 2\end{pmatrix}$ &
$\begin{pmatrix}0& 1&1\\ \frac{1}{3} & 0&1\end{pmatrix}$ & $\mathrm{Gar}^2(0,1,1)$\\
\hline
$\begin{pmatrix}0& \frac{1}{2}\\ \frac{1}{3} & 0\end{pmatrix}$ & 3 &
$\begin{pmatrix}   \left[\begin{matrix}3\end{matrix}\right] & \left[\begin{matrix}2\\1\end{matrix}\right]  & 1\end{pmatrix}$ &
$\begin{pmatrix} \frac{1}{2}&1\\ 0&0\end{pmatrix}$ & $P_{III}^{D_7}$\\
\hline
$\begin{pmatrix}0& \frac{1}{2}\\ \frac{1}{4} & 0\end{pmatrix}$ & 4 &
$\begin{pmatrix}   \left[\begin{matrix}4\end{matrix}\right] & \left[\begin{matrix}2\\2\end{matrix}\right] &  1\end{pmatrix}$ &
$\begin{pmatrix}1&1\\  0&1\end{pmatrix}$&$P_{III}^{D_6}$\\
\hline
$\begin{pmatrix}0& \frac{1}{2}\\ \theta & 0\end{pmatrix}$ & 2 &
$\begin{pmatrix}   \left[\begin{matrix}1\\1\end{matrix}\right] & \left[\begin{matrix}2\end{matrix}\right] &  1\end{pmatrix}$ &
$\begin{pmatrix}0&1&0 \\ \theta-1&0 & \theta \end{pmatrix}$& $P_{V}$\\
\hline
$\begin{pmatrix}0& 1\\ \frac{1}{2} & \theta\end{pmatrix}$ & 2 &
$\begin{pmatrix}    \left[\begin{matrix}2\end{matrix}\right] &  \left[\begin{matrix}1\\1\end{matrix}\right] & 1\end{pmatrix}$ &
$\begin{pmatrix}1& 1\\ \theta-1 & \theta\end{pmatrix}$& $P_{III}^{D_6}$\\
\hline
$\begin{pmatrix}\frac{3}{2}\\ 0\end{pmatrix}$ & 2 &
$\begin{pmatrix}    \left[\begin{matrix}2\end{matrix}\right] &   1\end{pmatrix}$ &
$\begin{pmatrix}3\\ 0\end{pmatrix}$& $P_{II}$\\
\hline
\end{tabular}
\end{center}
\label{table:irregular}
\end{table}

\begin{proof}From Lemmae \ref{lem:IrregPoleLogPole} and  \ref{lem:IrregRamPoleLogPole} the irregular
poles can be scattered as several logarithmic poles with all of them having exponent $\theta$ with infinite
orbifold order, except one in the ramified case, having exponent $\frac{1}{2}$. We are led to the list of table
\ref{table:logarithmic}. We then deduce the list of table \ref{table:irregular} by confluence of poles with 
infinite order $\theta$, and possibly one of them $\frac{1}{2}$. In the first four entries of table \ref{table:logarithmic},
we have no other choice than make the two poles with exponent $\frac{1}{2}$ and $\theta$ confluing into
a ramified irregular pole with $\kappa=\frac{1}{2}$. In the last entry however, we have several possible 
confluences, namely $\{\frac{1}{2},\theta_1\}\rightsquigarrow \kappa=\frac{1}{2}$, $\{\theta_1,\theta_2\}\rightsquigarrow \kappa=1$ and $\{\frac{1}{2},\theta_1,\theta_2\}\rightsquigarrow \kappa=\frac{3}{2}$.
\end{proof}

Finally, by confluence of ramification fibers of $\phi$, we complete the list for pull-back solutions:

\begin{prop}\label{prop:classificationIrregConfl}
Assume $(C_0,E_0,\nabla_0)$ is a (normalized) differential equation
with at least one irregular pole $p_k$ having non trivial Stokes matrices, and assume 
$\phi:C\to C_0$ is a non scattered ramified cover of degree $d\ge 2$. If $T\le B$, 
then $(C_0,E_0,\nabla_0)$ is a degenerate hypergeometric equation,
and up to bundle transformation, we are in the list of table \ref{table:irregularConfl}.
\end{prop}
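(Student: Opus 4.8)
The plan is to reduce everything to the scattered case, which is already settled by Proposition \ref{prop:classificationIrreg}, and then to classify the admissible confluences of ramification fibers. First I would feed the given non-scattered cover $\phi$ into the scattering Lemmae \ref{lem:FibreCritique}--\ref{lem:FibreIrregRam}: applied successively they produce, over the \emph{same} normalized equation $(C_0,E_0,\nabla_0)$ and with the same degree $d\ge2$, a deformation $\phi\rightsquigarrow\phi'$ with $\phi'$ scattered and $T'-B'\le T-B\le0$. Since this process only moves critical values and never alters $\nabla_0$, the hypothesis that $\nabla_0$ has an irregular pole with non-trivial Stokes still holds for $\phi'$, so Proposition \ref{prop:classificationIrreg} applies: $(C_0,E_0,\nabla_0)$ is a degenerate hypergeometric equation and $\phi'$ is one of the seven entries of Table \ref{table:irregular}.

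Next I would run the construction backwards. The cover $\phi$ is recovered from $\phi'$ by confluence of ramification fibers, each step being the inverse of one of the Lemmae \ref{lem:FibreCritique}--\ref{lem:FibreIrregRam}; and since a scattering step never increases $T-B$, a confluence step can never decrease it. A short Riemann--Hurwitz computation, using \eqref{eq:RiemHurw} and \eqref{eq:dimirrTeich}, shows that every entry of Table \ref{table:irregular} already satisfies $T=B$; as $\phi$ must satisfy $T\le B$ (and $T>0$, since otherwise there is no deformation), only the confluences that keep $T-B=0$ unchanged are allowed, and at least one such step occurs because $\phi$ is assumed non-scattered. Reading off the Lemmae, these \emph{neutral} confluences are exactly those that absorb a simple branching fiber into the fiber over an irregular pole (inverses of Lemmae \ref{lem:FibreIrregUnram} and \ref{lem:FibreIrregRam}) or over a logarithmic pole of infinite orbifold order (inverse of Lemma \ref{lem:FibreNonPeriodic}), together with the equality case of the inverse of Lemma \ref{lem:FibrePeriodic}; absorbing a branching into a periodic logarithmic pole in the strict case, or merging two free branchings together (inverse of Lemma \ref{lem:FibreCritique}), strictly raises $T-B$ and is therefore forbidden.

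It then remains to run through the seven entries of Table \ref{table:irregular} and list, for each, all distinct neutral confluences with $T>0$. Concretely, the trailing integers in the passports are free simple branching fibers, and I would move them one at a time onto the fibers over the poles, reading the new partition off the product of the corresponding permutations by the concatenation identity
$$(1\cdots\mu)=(1\cdots\mu')(\mu'+1\cdots\mu)\cdot(1,\mu'+1)$$
from the proof of Lemma \ref{lem:FibrePeriodic} (read backwards it fuses two cycles into one). For instance, absorbing $j$ free branchings into the fiber $[2,2,2]$ over the ramified pole of the first entry lowers $T$ and $B$ each by $j$ and turns that fiber into $[4,2]$ when $j=1$ and into $[6]$ when $j=2$, giving one rank $2$ and one rank $1$ solution; the second entry yields one further rank $1$ solution, while the remaining, rank-one, entries admit no neutral confluence compatible with $T>0$. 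For each surviving case I would compute the new passport, the local formal data of $(E,\nabla)=\phi^*(E_0,\nabla_0)$ — using $(\kappa,\theta)=(m\kappa_0,m\theta_0)$ together with Remark \ref{Rem:ramificationindex} at the even-multiplicity points above a ramified pole — the genus, and the rank $T$, and then check that exactly the covers recorded in Table \ref{table:irregularConfl} arise, each of them genuinely non-scattered.

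The main obstacle I foresee is pure bookkeeping: carrying out this finite case analysis without missing or double-counting a confluence, and correctly tracking how each fusion changes the partition over a pole, hence the local formal data, the number $N_k$ of poles above that pole, the genus, and the rank. Realizability of each candidate cover is not a difficulty — it is handed to us by the concatenation construction already used in the scattering lemmae — so the remaining work is entirely combinatorial.
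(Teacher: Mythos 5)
Your proposal is correct and follows essentially the same route as the paper: reduce to the scattered case via Proposition \ref{prop:classificationIrreg}, observe that every entry of Table \ref{table:irregular} already has $T=B$ so that only confluences leaving $T-B$ unchanged (i.e.\ absorbing simple branchings into fibers over irregular poles or over logarithmic poles of infinite orbifold order) are admissible, and then check that only the first two entries produce confluent covers with $T>0$. Your treatment is somewhat more careful than the paper's (e.g.\ isolating the equality case of Lemma \ref{lem:FibrePeriodic} and explicitly excluding the fusion of two free branchings), but the argument is the same.
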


\begin{table}[h]
\caption{Irregular classification with confluent cover}
\begin{center}
\begin{tabular}{|c|c|c|c|c|}
\hline 
Local formal data  & Degree & Covering passport & Local formal data & Isomonodromy \\
for $(E_0,\nabla_0)$ &&& for $(E,\nabla)$& equation\\
\hline
$\begin{pmatrix}0& \frac{1}{2}\\ \frac{1}{3} & 0\end{pmatrix}$ & 6 &
$\begin{pmatrix}  \left[\begin{matrix}3\\3\end{matrix}\right]  & \left[\begin{matrix}4\\2\end{matrix}\right] &  2\end{pmatrix}$ &
$\begin{pmatrix}1&2\\ 0 & 1\end{pmatrix}$& $\mathrm{Gar}^2(1,2)$ \\
\hline
$\begin{pmatrix}0& \frac{1}{2}\\ \frac{1}{3} & 0\end{pmatrix}$ & 6 &
$\begin{pmatrix}  \left[\begin{matrix}3\\3\end{matrix}\right]  & \left[\begin{matrix}6\end{matrix}\right] &  1\end{pmatrix}$ &
$\begin{pmatrix}3\\  1\end{pmatrix}$& $P_{II}$ \\
\hline
$\begin{pmatrix}0& \frac{1}{2}\\ \frac{1}{3} & 0\end{pmatrix}$ & 4 &
$\begin{pmatrix}   \left[\begin{matrix}3\\1\end{matrix}\right] & \left[\begin{matrix}4\end{matrix}\right]  & 1\end{pmatrix}$ &
$\begin{pmatrix}0& 2\\ \frac{1}{3} & 1\end{pmatrix}$ & $P_{IV}$\\
\hline
\end{tabular}
\end{center}
\label{table:irregularConfl}
\end{table}

\begin{proof}We now inverse the scattering process of Lemmae  \ref{lem:FibreCritique} - \ref{lem:FibreIrregRam}.
To do this, we replace simple branching fibers outside the poles of $\nabla_0$ by additional ramifications
over poles. Note that in table \ref{table:irregular}, each entry satisfies $T=B$ so that we cannot add ramifications
over logarithmic poles with finite orbifold order, otherwise $T-B$ becomes $>0$ (see Lemma \ref{lem:FibrePeriodic}).
The only possibility is therefore to add ramifications over irregular poles of $\nabla_0$, or logarithmic poles 
with exponent $\theta$ having infinite orbifold order. Only the first two lines give examples with $T>0$.
\end{proof}

\begin{remark}We observe that the algebraic solution of $P_{II}(0)$ can be constructed from 
two pull-back constructions (see table \ref{table:irregular} last line and table \ref{table:irregularConfl} line $2$).
This comes from the fact that Airy equation $\begin{pmatrix}\frac{3}{2}\\  0\end{pmatrix}$ is itself pull-back
from Kummer equation $\begin{pmatrix}0& \frac{1}{2}\\ \frac{1}{3} & 0\end{pmatrix}$ by a $3$-fold ramified cover.
Similarly, the algebraic solution of $P_{III}^{D_6}$ appears twice in table \ref{table:irregular} for $\theta=0$
due to the fact 
that Kummer equation $\begin{pmatrix}0& 1\\ \frac{1}{2} & \theta\end{pmatrix}$ is the double cover
of $\begin{pmatrix}0& \frac{1}{2}\\ \frac{1}{4} & 0\end{pmatrix}$ in that case.
\end{remark}

\section{Classification of classical solutions in the case $N=2$.}\label{sec:ClassicalN=2}

In section \ref{sec:Classifcover}, we have given a complete classification of algebraic
solutions of irregular Garnier systems whose linear Galois group is $\SL$. Indeed,
we have classified those solutions of type (1) in Corollary \ref{cor:structure}.
It does not make sense to do the same for solutions of type (2) or (3) in Corollary \ref{cor:structure},
since there are infinitely many, for arbitrary large rank $N$. However, for a given rank,
it makes sense to classify, and we do this in this section for the case $N=2$
which is the first open case after Painlev\'e equations. Recall that we have two solutions 
of type (1) in the case $N=2$, one for each formal data:
$$\begin{pmatrix}0&1&1\\ \frac{1}{3}&0&1\end{pmatrix}\ \ \ 
\text{and}\ \ \ \begin{pmatrix}1&2\\ 0&1\end{pmatrix}$$

\subsection{Without apparent singular point}\label{sec:ClassifDihedral5}
They correspond to the type (2) of Corollary \ref{cor:structure} and have Galois group $D_\infty$.

\begin{equation}
\left\{\begin{matrix}
\begin{pmatrix}0&0&0&\frac{1}{2}\\ \frac{1}{2}&\frac{1}{2}&\frac{1}{2}&0\end{pmatrix}\hfill\hfill
\text{infinite discrete family}\\
\begin{pmatrix}0&0&0&1\\ \frac{1}{2}&\frac{1}{2}&\theta_1&\theta_2\end{pmatrix},\ \ \ 
\begin{pmatrix}0&\frac{1}{2}&0&0\\ \frac{1}{2}&0&\theta_1&\theta_2\end{pmatrix}\hfill\hfill
\text{two-parameter families}\\
\begin{pmatrix}0&0&2\\ \frac{1}{2}&\frac{1}{2}&\theta\end{pmatrix},\ \ 
\begin{pmatrix}0&\frac{1}{2}&1\\ \frac{1}{2}&0&\theta\end{pmatrix},\ \ 
\begin{pmatrix}\frac{1}{2}&\frac{1}{2}&0\\ 0&0&\theta\end{pmatrix},\ \ 
\begin{pmatrix}0&\frac{3}{2}&0\\ \frac{1}{2}&0&\theta\end{pmatrix}\ \ \
\text{one-parameter families}\\
\begin{pmatrix}\frac{1}{2}&\frac{3}{2}\\ 0&0\end{pmatrix}\ \ \ \text{and}\ \ \ 
\begin{pmatrix}0&\frac{5}{2}\\ \frac{1}{2}&0\end{pmatrix}\hfill\hfill
\text{sporadic solutions}
\end{matrix}\right.
\end{equation}

These formal data correspond to those for which the dihedral group $D_\infty$ occur
as a Galois group of the linear equation, up to bundle transformation. In order to find this list,
we have to take into account the following constraints:
\begin{itemize}
\item there are $2$ or $4$ poles where the local monodromy (or Galois group) is anti-diagonal,
and the local formal type must be
$$\begin{pmatrix}0\\ \frac{1}{2}\end{pmatrix}
\ \ \ \text{or}\ \ \ \begin{pmatrix}\frac{k}{2}\\ 0\end{pmatrix},\ \ \ \text{with }k\in\mathbb Z_{>0}\text{ odd.}$$
\item other poles are of local formal type
$$\begin{pmatrix}k\\ \theta\end{pmatrix}\ \ \ \text{with }k\in\mathbb Z_{\ge0},\ \theta\in\mathbb C.$$
\end{itemize}
We would like to insist that it is not necessary to consider particular values for $\theta$ as normalized equations 
with differential Galois group $D_\infty$ having poles with diagonal local monodromy occur in family where
each exponant $\theta$ can be deformed arbitrarily. This comes from the fact that the monodromy representation
itself can be deformed as well.

The first entry corresponds to the unique case with $4$ poles having local anti-diagonal monodromy.
It is an irregular version of Picard-Painlev\'e equation (see \cite{MazzoccoPicard,LPU}): 
there are infinitely many algebraic solutions, in bijection with the orbits of $\mathbb Q\times\mathbb Q$
under the standard action of $\SL(\mathbb Z)$. In fact, if $\tilde C\to C\simeq\mathbb P^1$ denotes the elliptic curve
given by the $2$-fold cover ramifying over the $4$ poles of $(E,\nabla)$, then Picard solutions are related with 
torsion points on $\tilde C$ and how they varry when deforming the poles, and the curve $\tilde C$.
Here, the story is the same. Indeed, the locus of $D_\infty$ Galois group in the moduli space is closed algebraic,
and a differential equation in this closed set has trivial Stokes matrices and is characterized by its monodromy
representation.

For all other cases, recall that the differential equation with dihedral Galois group can be determined
by means of exponents $\theta_i$'s, as for its monodromy, once we know the irregular curve. Therefore,
for each $\theta_i$'s, we get exactly one algebraic solution.

\subsection{With apparent singular point}
They correspond to the type (3) of Corollary \ref{cor:structure} and have Galois group $C_\infty$ or $D_\infty$.
However, as noticed in Remark \ref{rem:ApparentDihedral}, algebraic solutions with Galois group $D_\infty$
and apparent singular points always arise as particular cases of more general algebraic solutions with Galois group $D_\infty$
and arbitrary singular points as listed in section \ref{sec:ClassifDihedral5}. It just remains to complete the list
with those algebraic solutions with Galois group $C_\infty$ and (at least one) apparent singular point.

\begin{equation}
\left\{\begin{matrix}
\begin{pmatrix}0&0&0&1\\ 0&\theta_1&\theta_2&\theta_3\end{pmatrix},\ \ \theta_1+\theta_2+\theta_3=0\ \ \ 
\text{two-parameter family}\\
\begin{pmatrix}0&0&2\\ 0&\theta&-\theta\end{pmatrix},\ \ 
\begin{pmatrix}0&1&1\\ 0&\theta&-\theta\end{pmatrix}\hfill\hfill
\text{one-parameter families}\\
\begin{pmatrix}0&3\\ 0&0\end{pmatrix}\hfill\hfill
\text{sporadic solution}
\end{matrix}\right.
\end{equation}

For each value of $\theta_i$'s, there is exactly one normalized equation once the irregular curve is fixed,
and therefore exactly one algebraic solution.
Solutions with $2$ or more apparent singular points arise as particular cases of these ones by specifying $\theta_i$'s.

\section{Explicit Hamiltonians for some irregular Garnier systems}\label{sec:GarnierHamiltonians}
Here we provide the linear differential equation and the Hamiltonians for some particular formal types
(the complete list comes from \cite{Kimura,Kawamuko}). We translate our notations with Kimura and Kawamuko's.

\subsection{$\mathrm{Kim}_4(1,2,2)$}
The general linear differential equation with (non apparent) poles $x=0,1,\infty$
and corresponding formal type $\begin{pmatrix}2&2&1\\ \theta_0&\theta_1&\theta_\infty\end{pmatrix}$ can be normalized into the form\footnote{From Kimura's formulae \cite{Kimura}, set $\eta_i=1$, 
$\varkappa_i=\theta_i$, $\varkappa=\frac{1}{4}\left[ (\theta_0+\theta_1-1)^2-\theta_\infty^2\right]$
and $K_i=H_i$.}
\begin{equation}\label{eq:Lin(1,2,2)}
L(1,2,2):\ \left\{\begin{matrix}u''+f(x)u'+g(x)u=0\ \ \ \hfill\text{with}\\
f(x)=\frac{t_2}{x^2}+\frac{2-\theta_0}{x}+\frac{t_1}{(x-1)^2}+\frac{2-\theta_1}{x-1}-\sum_{k=1,2}\frac{1}{x-q_k}\hfill\hfill\\
g(x)=\frac{(\theta_0+\theta_1-1)^2-\theta_\infty^2}{4x(x-1)}-\frac{t_1H_1}{x(x-1)^2}+\frac{t_2H_2}{x^2(x-1)}+\sum_{k=1,2}\frac{q_k(q_k-1)p_k}{x(x-1)(x-q_k)}
\end{matrix}\right.
\end{equation}
Singular points $x=q_1,q_2$ are apparent if and only if coefficients $H_1,H_2$ are given by
\begin{equation}\label{eq:Ham(1,2,2)}
H(1,2,2):\ \left\{\begin{matrix}
H_1=\hfill-\frac{q_1^2(q_1-1)^2(q_2-1)}{t_1(q_1-q_2)}
\left(p_1^2-\left(\frac{\theta_0}{q_1}-\frac{t_2}{q_1^2}+\frac{\theta_1-1}{q_1-1}-\frac{t_1}{(q_1-1)^2}\right)p_1+\frac{(\theta_0+\theta_1-1)^2-\theta_\infty^2}{4q_1(q_1-1)}\right)\\
\hfill+\frac{(q_1-1)q_2^2(q_2-1)^2}{t_1(q_1-q_2)}
\left(p_2^2-\left(\frac{\theta_0}{q_2}-\frac{t_2}{q_2^2}+\frac{\theta_1-1}{q_2-1}-\frac{t_1}{(q_2-1)^2}\right)p_1+\frac{(\theta_0+\theta_1-1)^2-\theta_\infty^2}{4q_2(q_2-1)}\right)\\
H_2=\hfill-\frac{q_1^2(q_1-1)^2q_2}{t_2(q_1-q_2)}
\left(p_1^2-\left(\frac{\theta_0-1}{q_1}-\frac{t_2}{q_1^2}+\frac{\theta_1}{q_1-1}-\frac{t_1}{(q_1-1)^2}\right)p_1+\frac{(\theta_0+\theta_1-1)^2-\theta_\infty^2}{4q_1(q_1-1)}\right)\\
\hfill+\frac{q_1q_2^2(q_2-1)^2}{t_2(q_1-q_2)}
\left(p_2^2-\left(\frac{\theta_0-1}{q_2}-\frac{t_2}{q_2^2}+\frac{\theta_1}{q_2-1}-\frac{t_1}{(q_2-1)^2}\right)p_1+\frac{(\theta_0+\theta_1-1)^2-\theta_\infty^2}{4q_2(q_2-1)}\right)
\end{matrix}\right.
\end{equation}
A deformation of (\ref{eq:Lin(1,2,2)}) is isomonodromic if, and only if, parameters satisfies Hamiltonian system
(\ref{eq:HamiltonianGarnierSystem}).
\begin{equation}\label{eq:HamiltonianGarnierSystemK}
\frac{dq_j}{dt_i}=\frac{\partial H_i}{\partial p_j}\ \ \ \text{and}\ \ \ \frac{dp_j}{dt_i}=-\frac{\partial H_i}{\partial q_j}\ \ \ 
\forall i,j=1,2.
\end{equation}
To construct the pull-back solution (second line of Table \ref{table:irregular}),
we start with the differential equation  $\frac{d^2u}{d z^2}+\frac{2}{3z}\frac{du}{d z}-\frac{1}{z}u=0$ 
and consider its pull-back 
by the branch cover
$$z=\phi(x)=\frac{t_2^2(2x-4q_1x+q_1^2+q_1)^3}{16q_1^3(q_1+1)^3x^2(x-1)^2}.$$
Comparing with (\ref{eq:Lin(1,2,2)}), we get the first solution of Theorem \ref{thm:3nonclassical}.

\subsection{$\mathrm{Kim}_6(2,3)$}
The general linear differential equation with (non apparent) poles $x=0,\infty$
and corresponding formal type $\begin{pmatrix}2&3\\ \theta_0&\theta_\infty\end{pmatrix}$ can be normalized into the form\footnote{From Kimura's formulae \cite{Kimura}, set $\eta_0=1$, 
$\varkappa_0=\theta_0$, $\varkappa_\infty=\frac{\theta_0+\theta_\infty-1}{4}$
and $K_i=H_i$.}
\begin{equation}\label{eq:Lin(2,3)}
L(2,3):\ \left\{\begin{matrix}u''+f(x)u'+g(x)u=0\ \ \ \hfill\text{with}\\
f(x)=\frac{t_2}{x^2}+\frac{2-\theta_0}{x}-t_1-\frac{x}{2}-\sum_{k=1,2}\frac{1}{x-q_k}\hfill\hfill\\
g(x)=\frac{\theta_0+\theta_\infty-1}{8}-\frac{H_1}{2x}+\frac{t_2H_2}{x^2}+\sum_{k=1,2}\frac{q_k(q_k-1)p_k}{x(x-1)(x-q_k)}
\end{matrix}\right.
\end{equation}
Singular points $x=q_1,q_2$ are apparent if and only if coefficients $H_1,H_2$ are given by
\begin{equation}\label{eq:Ham(2,3)}
H(2,3):\ \left\{\begin{matrix}
H_1=&\hfill\frac{2q_1^2}{q_1-q_2}\left(p_1^2-\left(\frac{\theta_0}{q_1}-\frac{t_2}{q_1^2}+\frac{q_1}{2}+t_1\right)p_1+\frac{\theta_0+\theta_\infty-1}{8}\right)\\
&\hfill-\frac{2q_2^2}{q_1-q_2}\left(p_2^2-\left(\frac{\theta_0}{q_2}-\frac{t_2}{q_2^2}+\frac{q_2}{2}+t_1\right)p_2+\frac{\theta_0+\theta_\infty-1}{8}\right)\\
H_2=&-\frac{q_1^2q_2}{t_2(q_1-q_2)}\left(p_1^2-\left(\frac{\theta_0-1}{q_1}-\frac{t_2}{q_1^2}+\frac{q_1}{2}+t_1\right)p_1+\frac{\theta_0+\theta_\infty-1}{8}\right)\\
&\hfill+\frac{q_1q_2^2}{t_2(q_1-q_2)}\left(p_2^2-\left(\frac{\theta_0-1}{q_2}-\frac{t_2}{q_2^2}+\frac{q_2}{2}+t_1\right)p_2+\frac{\theta_0+\theta_\infty-1}{8}\right)
\end{matrix}\right.
\end{equation}
and isomonodromic deformations are defined by (\ref{eq:HamiltonianGarnierSystemK}).
To construct the pull-back solution (first line of Table \ref{table:irregularConfl}),
we start with the differential equation  $\frac{d^2u}{d z^2}+\frac{2}{3z}\frac{du}{d z}-\frac{1}{z}u=0$ 
and consider its pull-back 
by the branch cover
$$z=\phi(x)=\frac{(3x^2+8t_1x+4q_1t_1+6q_1^2)^3}{6912x^2}.$$
Comparing with (\ref{eq:Lin(2,3)}), we get the second solution of Theorem \ref{thm:3nonclassical}.

\subsection{$\mathrm{Kaw}_4(5/2,3/2)$}
The general linear differential equation with (non apparent) poles $x=0,\infty$
and corresponding formal type $\begin{pmatrix}\frac{3}{2}&\frac{1}{2}\\ 0&0\end{pmatrix}$ can be normalized into the form\footnote{From Kawamuko's formulae \cite{Kawamuko}, set $\lambda_i=q_i$, $\mu_i=p_i$
and $h_i^{04}=H_i$.} $u''=g(x)u$ with
\begin{equation}\label{eq:Lin(5/2,3/2)}
g(x)=\frac{t_2^2}{4x^3}+\frac{H_1}{x^2}+\frac{H_2}{x}+\frac{t_1}{2}+\frac{x}{4}+\sum_{k=1,2}\left(\frac{3}{4(x-q_k)^2}-\frac{p_k}{x-q_k}\right)
\end{equation}
Then set 
$$u_1=q_1+q_2,\ \ \ u_2=q_1q_2,\ \ \ v_1=\frac{q_1+q_2}{2(q_1-q_2)^2}+\frac{p_1q_1-p_2q_2}{q_1-q_2}$$
$$\text{and}\ \ \ v_2=-\frac{1}{(q_1-q_2)^2}-\frac{p_1-p_2}{q_1-q_2}$$
and 
$$\left\{\begin{matrix}
K_1=2u_1v_1^2+4u_2v_1v_2-4v_1-\frac{u_1^2}{2}-t_1u_1+\frac{u_2}{2}+\frac{t_2^2}{2u_2}\\
t_2K_2=-2u_2v_1^2+2u_2^2v_2^2-2u_2v_2+\frac{u_1u_2}{2}+t_1u_2-\frac{t_2^2u_1}{2u_2}
\end{matrix}\right.$$
Deformation of (\ref{eq:Lin(5/2,3/2)}) is isomonodromic if, and only if
$$\frac{du_j}{dt_i}=\frac{\partial K_i}{\partial v_j}\ \ \ \text{and}\ \ \ \frac{dv_j}{dt_i}=-\frac{\partial K_i}{\partial u_j}\ \ \ 
\forall i,j=1,2.$$
The first classical sporadic solution of Theorem \ref{thm:rank2classical}, 
with dihedral linear Galois group, can be constructed by 
pulling back the differential equation $\frac{d^2u}{d z^2}=\left(\frac{1}{z}-\frac{3}{16x^2}\right)u$ by the ramified covering
$$z=\phi(x)=\frac{(x^2+3t_1x-3t_2)^2}{36 x};$$
after normalizing, and comparing with equation (\ref{eq:Lin(5/2,3/2)}), we get the rational solution
$$(t_1,t_2)\mapsto(u_1,u_2,v_1,v_2):=\left(-t_1,t_2,0,\frac{3}{4t_2}\right).$$

\bibliographystyle{amsplain}

\providecommand{\bysame}{\leavevmode\hbox to3em{\hrulefill}\thinspace}
\providecommand{\MR}{\relax\ifhmode\unskip\space\fi MR }
\providecommand{\MRhref}[2]{%
  \href{http://www.ams.org/mathscinet-getitem?mr=#1}{#2}
}
\providecommand{\href}[2]{#2}
\begin{thebibliography}{}

\end{thebibliography}


\begin{thebibliography}{9}
\frenchspacing

\bibitem{AK} 
{\sc F. V. Andreev, A. V.  Kitaev}, 
\emph{Transformations $RS^2_4(3)$ of the ranks $\le4$ and algebraic solutions of the sixth Painlev{\'e} equation.} 
Comm. Math. Phys. {\bf 228} (2002) 151-176. 

\bibitem{AK2} 
{\sc F. V. Andreev, A. V.  Kitaev}, 
\emph{Some examples of $RS^2_3(3)$-transformations of ranks $5$ and $6$ as the higher order transformations for the hypergeometric function.} Ramanujan J. {\bf 7} (2003) 455-476. 

\bibitem{BoalchIco} 
{\sc P. Boalch}, 
\emph{The fifty-two icosahedral solutions to Painlev\'e VI.}
J. Reine Angew. Math. {\bf 596} (2006) 183-214. 

\bibitem{BoalchTetraOcta} 
{\sc P. Boalch}, 
\emph{Some explicit solutions to the Riemann-Hilbert problem.} 
Differential equations and quantum groups, 85-112,
IRMA Lect. Math. Theor. Phys., {\bf 9}, Eur. Math. Soc., Z\"urich, 2007. 

\bibitem{Boalch6} 
{\sc P. Boalch}, 
\emph{Six results on Painlev\'e VI.} Th\'eories asymptotiques et \'equations de Painlev\'e, 1-20,
S\'emin. Congr. {\bf 14}, Soc. Math. France, Paris, 2006. 

\bibitem{BoalchHigherGenus} 
{\sc P. Boalch}, 
\emph{Higher genus icosahedral Painlev\'e curves.}
Funkcial. Ekvac. {\bf 50} (2007) 19-32. 

\bibitem{Boalch} 
{\sc P. Boalch}, 
\emph{Towards a non-linear Schwarz's list.} 
The many facets of geometry, 210-236, Oxford Univ. Press, Oxford, 2010.

\bibitem{BMM}
{\sc  A. A. Bolibruch, S. Malek, C. Mitschi}, 
\emph{On the generalized Riemann-Hilbert problem with irregular singularities.} 
Expo. Math. {\bf 24} (2006) 235-272. 

\bibitem{CM}
{\sc  P. Calligaris, M. Mazzocco}, 
\emph{Finite orbits of the pure braid group on the monodromy of the 2-variable Garnier system.} 
 arXiv:1705.03295 [math.CA]. 
 
\bibitem{CS}
{\sc K. Corlette, C. Simpson},
\emph{On the classification of rank-two representations of quasiprojective fundamental groups.}
Compos. Math. {\bf 144} (2008)  1271-1331.

\bibitem{Cousin} 
{\sc G. Cousin},
\emph{Algebraic isomonodromic deformations of logarithmic connections on the Riemann sphere and finite braid group orbits on character varieties.} Math. Ann. {\bf 367} (2017) 965-1005.

\bibitem{CousinHeu} 
{\sc G. Cousin, V. Heu},
\emph{Algebraic isomonodromic deformations and the mapping class group } 
arXiv:1612.05779 [math.AG]

\bibitem{CousinMoussard} 
{\sc G. Cousin, D. Moussard},
\emph{Finite Braid group orbits in $\mathrm{Aff}(\mathbb C)$-character varieties of the punctured sphere.} 
Int. Math. Res. Not. IMRN {\bf 2018} 3388-3442.

\bibitem{Diarra1}
{\sc K. Diarra},
\emph{Construction et classification de certaines solutions alg\'ebriques des syst\`emes de Garnier.}
Bull. Braz. Math. Soc. {\bf 44} (2013) 129-154.

\bibitem{Doran} 
{\sc C. F. Doran}, 
\emph{Algebraic and geometric isomonodromic deformations.}
J. Differential Geom. {\bf 59} (2001) 33-85. 

\bibitem{DM0}
{\sc B. Dubrovin, M. Mazzocco},
\emph{Monodromy of certain Painlev\'e-VI transcendents and reflection groups.} 
Invent. Math. {\bf 141} (2000) 55-147. 

\bibitem{DM}
{\sc B.  Dubrovin, M. Mazzocco}, 
\emph{Canonical structure and symmetries of the Schlesinger equations.} 
Comm. Math. Phys. {\bf 271} (2007) 289-373.

\bibitem{DLR}
{\sc A. Duval, M. Loday-Richaud},
\emph{Kova\v{c}i\v{c}'s algorithm and its application to some families of special functions. }
Appl. Algebra Engrg. Comm. Comput. {\bf 3} (1992) 211-246. 

\bibitem{Girand}
{\sc A. Girand},
\emph{A new two-parameter family of isomonodromic deformations over the five punctured sphere.} 
Bull. Soc. Math. France {\bf 144} (2016) 339-368.

\bibitem{Heu}
{\sc V. Heu},
\emph{Universal isomonodromic deformations of meromorphic rank 2 connections on curves.} 
Ann. Inst. Fourier (Grenoble) {\bf 60} (2010) 515-549. 

\bibitem{Hitchin}
{\sc N. Hitchin},
\emph{Poncelet polygons and the Painlev\'e equations.} 
Geometry and analysis (Bombay, 1992), 151-185, Tata Inst. Fund. Res., Bombay, 1995. 

\bibitem{IIS}
{\sc M. Inaba, K. Iwasaki and M.-H. Saito},  
\emph{Dynamics of the sixth Painlev\'e equation.} 
Th\'eories asymptotiques et \'equations de Painlev\'e, 103-167, 
S\'emin. Congr., {\bf 14}, Soc. Math. France, Paris, 2006.

\bibitem{InabaSaito} 
{\sc M. Inaba, M.-H. Saito}, 
\emph{Moduli of unramified irregular singular parabolic connections on a smooth projective curve.} 
Kyoto J. Math. {\bf 53} (2013) 433-482. 

\bibitem{Inaba} 
{\sc M. Inaba}, 
\emph{Moduli space of irregular singular parabolic connections of generic ramified type on a smooth projective curve.} 
arXiv:1606.02369

\bibitem{KawamukoRat}
{\sc H. Kawamuko}, 
\emph{Rational solutions of the fourth Painlev\'e equation in two variables.}
Funkcial. Ekvac. {\bf 46} (2003) 1-21.

\bibitem{Kawamuko}
{\sc H. Kawamuko}, 
\emph{On the Garnier system of half-integer type in two variables.} 
Funkcial. Ekvac. {\bf 52} (2009) 181-201.

\bibitem{KawamukoAlg} 
{\sc H. Kawamuko},
\emph{On algebraic solutions of G(3,2) and G(5/2,1,1).} 
Exact WKB analysis and microlocal analysis, 99-111, 
RIMS K\^oky\^uroku Bessatsu, B37, Res. Inst. Math. Sci. (RIMS), Kyoto, 2013. 

\bibitem{Kimura}
{\sc H. Kimura}, 
\emph{The degeneration of the two-dimensional Garnier system and the polynomial Hamiltonian structure. }
Ann. Mat. Pura Appl. {\bf 155} (1989) 25-74. 

\bibitem{Kitaev}
{\sc A. V. Kitaev}, 
\emph{Special functions of isomonodromy type, rational transformations of the spectral parameter, 
and algebraic solutions of the sixth Painlev\'e equation.} 
St. Petersburg Math. J. {\bf 14} (2003) 453-465.

\bibitem{Kitaev2}
{\sc A. V. Kitaev}, 
\emph{Quadratic transformations for the third and fifth Painlev\'e equations.} 
J. Math. Sci. (N.Y.) {\bf 136} (2006) 3586-3595
 
\bibitem{Kitaev3}
{\sc A. V. Kitaev}, 
\emph{Grothendieck's dessins d'enfants, their deformations, and algebraic solutions of the sixth Painlev\'e and Gauss hypergeometric equations.}
St. Petersburg Math. J. {\bf 17} (2006) 169-206.

\bibitem{Kitaev4}
{\sc A. V. Kitaev}, 
\emph{Remarks towards a classification of $RS^2_4(3)$-transformations and algebraic solutions of the sixth Painlev\'e equation.} Th\'eories asymptotiques et \'equations de Painlev\'e, 199-227, S\'emin. Congr., {\bf 14}, Soc. Math. France, Paris, 2006.

\bibitem{Komyo}
{\sc A. Komyo}, 
\emph{A family of flat connections on the projective space having dihedral monodromy and algebraic Garnier solutions.}
arXiv:1806.00970 [math.AG].

\bibitem{Krichever}
{\sc I. M. Krichever}, 
\emph{Isomonodromy equations on algebraic curves, canonical transformations and Whitham equations.}
Dedicated to Yuri I. Manin on the occasion of his 65th birthday. Mosc. Math. J. {\bf 2} (2002) 717-752.

\bibitem{LisovyyTykhyy} 
{\sc O. Lisovyy and Y. Tykhyy},  
\emph{Algebraic solutions of the sixth Painlev\'e equation.} 
J. Geom. Phys. {\bf 85} (2014) 124-163. 

\bibitem{LPT}
{\sc F. Loray, J. V. Pereira, F. Touzet},
\emph{ Representations of quasiprojective groups, Flat connections and Transversely projective foliations.}
Journal de l'\'Ecole Polytechnique - Math\'ematiques {\bf 3} (2016) 263-308.

\bibitem{LPU}
{\sc F. Loray, M. van der Put, F. Ulmer},
\emph{The Lam{\'e} family of connections on the projective line.}
Ann. Fac. Sci. Toulouse Math. {\bf 17} (2008) 371-409. 

\bibitem{MR}
{\sc J. Martinet, J.-P. Ramis},
\emph{Probl\`emes de modules pour des \'equations diff\'erentielles non lin\'eaires du premier ordre.}
Inst. Hautes \'Etudes Sci. Publ. Math. {\bf 55} (1982) 63-164. 

\bibitem{MRGalois}
{\sc J. Martinet, J.-P. Ramis},
\emph{Th\'eorie de Galois diff\'erentielle et resommation.} 
Computer algebra and differential equations, 117-214,
Comput. Math. Appl., Academic Press, London, 1990. 

\bibitem{MazzoccoPicard}
{\sc M. Mazzocco},
\emph{Picard and Chazy solutions to the Painlev\'e VI equation.}
Math. Ann. {\bf 321} (2001) 157-195. 

\bibitem{MazzoccoGarnier} 
{\sc M. Mazzocco}, 
\emph{The geometry of the classical solutions of the Garnier systems.}
Int. Math. Res. Not. {\bf 2002} 613-646.

\bibitem{OO}
{\sc Y. Ohyama, S. Okumura},
\emph{R. Fuchs' problem of the Painlev\'e equations from the first to the fifth.} 
Algebraic and geometric aspects of integrable systems and random matrices, 163-178, 
Contemp. Math., {\bf 593}, Amer. Math. Soc., Providence, RI, 2013.

\bibitem{OK}
{\sc K. Okamoto, H. Kimura},
\emph{On particular solutions of the Garnier systems and the hypergeometric functions of several variables.}
Quart. J. Math. Oxford Ser. (2) {\bf 37} (1986) 61-80. 

\bibitem{vdPS}
{\sc M. van der Put, M.-H. Saito}, 
\emph{Moduli spaces for linear differential equations and the Painlev\'e equations.} 
Ann. Inst. Fourier (Grenoble) {\bf 59} (2009) 2611-2667. 

\bibitem{Suzuki}
{\sc T. Suzuki},
\emph{Classical solutions of the degenerate Garnier system and their coalescence structures.}
J. Phys. A {\bf 39} (2006) 12103-12113.

\bibitem{VK}
{\sc R. Vidunas, A. V. Kitaev}, 
\emph{Quadratic transformations of the sixth Painlev\'e equation with application to algebraic solutions.}
Math. Nachr. {\bf 280} (2007) 1834-1855. 

\bibitem{VK2}
{\sc R. Vidunas, A. V. Kitaev}, 
\emph{Computation of highly ramified coverings.} 
Math. Comp. {\bf 78} (2009) 2371-2395.

\end{thebibliography}

\end{document}